\documentclass[reqno]{amsart}
\usepackage{CJK}
\usepackage{hyperref}
\usepackage{cite}
\usepackage{amsmath}
\usepackage{mathrsfs}
\usepackage{bm}
\usepackage{graphicx}
\usepackage{epsfig}
\usepackage{epstopdf}
\usepackage{amssymb}

\numberwithin{equation}{section}

\newtheorem{theorem}{Theorem}[section]
\newtheorem{lemma}[theorem]{Lemma}
\newtheorem{definition}[theorem]{Definition}

\newtheorem{remark}[theorem]{Remark}
\newtheorem{corollary}[theorem]{Corollary}

\allowdisplaybreaks

\makeatletter
\@namedef{subjclassname@2020}{%
	\textup{2020} Mathematics Subject Classification}
\makeatother

\newcommand{ \mint }{ {\int\hspace{-0.38cm}-}}

\begin{document}
	
	\title[\hfil Integral Harnack-type estimates and the rate of extinction] {Integral Harnack-type estimates and the rate of extinction for nonlocal doubly nonlinear  parabolic equations}
	
	\author[B. Shang, C. Zhang \hfil \hfilneg]
	{Bin Shang, Chao Zhang$^*$}
	
	\thanks{$^*$Corresponding author.}
	
	\address{Bin Shang \hfill\break
		School of Mathematics and Statistics, Northeast Normal University,
		Changchun 130024,  P.R. China
	} \email{shangbin0521@163.com}
	
	\address{Chao Zhang\hfill\break
		School of Mathematics and Institute for Advanced Study in Mathematics, Harbin Institute of Technology,
		Harbin 150001, P.R. China} \email{czhangmath@hit.edu.cn}
	\subjclass[2020]{35R11, 35K67, 35B45, 35B65} 
	\keywords{Integral Harnack-type inequality; Nonlocal doubly nonlinear parabolic equations; Extinction behavior; Decay estimates.}
	\maketitle

	\begin{abstract}
We establish local boundedness and integral Harnack-type inequalities for locally nonnegative weak solutions to nonlocal doubly nonlinear parabolic equations in the fast diffusion range by combining refined energy estimates with a careful treatment of the nonlocal terms. As an application, quantitative decay estimates for both the local mass and the local supremum of weak solutions are derived near the extinction time. Furthermore, we study the extinction phenomenon for bounded nonnegative weak solutions to the Cauchy--Dirichlet problem and obtain quantitative upper bounds for the extinction time.
	\end{abstract}

	\section{Introduction}
	\label{sec1}
	\par
	
	Let $E\subset \mathbb{R}^N$ be an open bounded set, $T>0$, and let $E_T:=E\times(0,T)$. In this paper, we study integral Harnack-type estimates for locally nonnegative weak solutions to the following nonlocal doubly nonlinear parabolic equation 
	\begin{align}
		\label{1.1}
		\partial_t \left(|u|^{q-1}u\right)+\mathcal{L}u=0 \quad \text{in } E_T,
	\end{align}
	where $p>1$, $q>0$ and the operator $\mathcal{L}$ is given by
	\begin{align*}
		\mathcal{L} u(x,t)=\mathrm{P.V.} \int_{\mathbb{R}^N} K(x,y,t)|u(x,t)-u(y,t)|^{p-2}(u(x,t)-u(y,t))\,dy.
	\end{align*}
	Here, P.V. means the Cauchy principal value and $K:\mathbb{R}^N\times\mathbb{R}^N\times (0,T]\rightarrow [0,\infty)$ is a symmetric measurable kernel satisfying
	\begin{align}
		\label{kernel}
		\frac{\Lambda^{-1}}{|x-y|^{N+sp}} \leq K(x,y,t)\equiv K(y,x,t) \leq \frac{\Lambda}{|x-y|^{N+sp}}\quad \mathrm{a.e. }\ x,y\in\mathbb{R}^N
	\end{align}
	for some $\Lambda \geq 1$ and $s \in(0,1)$. 
	
 Eq. \eqref{1.1} can be regarded as a nonlocal counterpart of the doubly nonlinear parabolic equation
\begin{align}
 	\label{1.2}
 	\partial_t \left(|u|^{q-1}u\right)-\mathrm{div}(|\nabla u|^{p-2}\nabla u)=0 \quad \text{in } E_T,
 \end{align}
which has attracted considerable attention owing to both its mathematical structure and its relevance to various physical phenomena, including shallow water flows \cite{ASD08} and glacier dynamics \cite{M76}, see also \cite{V06, LM18} for further classical applications. It is well known that Eq. \eqref{1.2} encompasses Trudinger's equation $(q=p-1)$, the evolutionary $p$-Laplace equation $(q=1)$, and the porous medium equation $(p=2)$ as special cases. There have been fruitful achievements on Eq. \eqref{1.2}, such as the existence of solutions \cite{BDMS18, MN23}, H\"{o}lder regularity \cite{BDL21, V92,S26}, higher integrability \cite{BDKS20, MSS23}, and Harnack type estimates \cite{V94,BDG23, BHS21}. We refer the reader to \cite{ GV06, KK07, MN23-1, M23} and the references therein for more related results.

For equation \eqref{1.1}, Banerjee-Garain-Kinnunen \cite{BGK23} investigated the pointwise behavior of weak solutions for $p>1$ and $q>0$. Within the same general framework, Misawa-Yamaura \cite{MY25} proved an expansion of positivity for nonnegative weak supersolutions, providing a fundamental tool for the qualitative analysis of the equation. Li \cite{L25} subsequently combined intrinsic scaling with a nonlocal De Giorgi argument to show that locally bounded sign-changing weak solutions are locally H\"{o}lder continuous in the doubly degenerate regime $p>2$ and $0<q<p-1$. The mixed singular-degenerate range was later addressed by Adimurthi and Modasiya \cite{AM25}, who established local H\"{o}lder continuity for bounded solutions under suitable restrictions on the parameters.  More recently, Savchenko-Skrypnik-Yevgenieva employed the expansion of positivity to establish Harnack inequalities for weak solutions in the upper fast diffusion regime \cite{SSY26} and the slow diffusion regime \cite{SSY261}. Related results for the case $q=p-1$ and the case $q=1$
with $p>2$ can be found in \cite{CN26} and \cite{P26,L26}, respectively. In addition, for $p=2$ and $q>1$, De Filippis \cite{DF26} proved local boundedness under a time-integrated tail condition. 
Building on this result, Cassanello-De Filippis-Stanko \cite{CDS26} derived an integral
Harnack inequality and further obtained the expansion of positivity.

In this paper, we are interested in studying $L^q$--$L^q$ Harnack-type estimates, which formally resemble $L^1$--$L^1$ Harnack-type estimates applied to $u^q$. Such estimates quantify the propagation of local integral quantities of weak solutions by relating their supremum over a time interval in a given ball to the corresponding infimum on a larger spatial set. Such integral Harnack-type estimates have been used to study the Cauchy problem for the fast diffusion equation \cite{HP85}, establish the existence of initial traces for weak solutions to singular parabolic $p$-Laplace equations \cite{DH90}, and analyze the extinction behavior of solutions to both classes of equations \cite{DK92}. They have also played a key role in deriving Harnack
inequalities for quasilinear parabolic equations of singular $p$-Laplace type \cite{DGV10} and for doubly nonlinear parabolic equations \cite{BDG23}. A systematic treatment of the corresponding theory can be found in the monograph \cite{DGV12}, while further developments for singular parabolic equations in the subcritical regime can be found in \cite{BIV10}. In the nonlocal setting, Cassanello-Ciani-Iannizzotto \cite{CCI26} established integral Harnack estimates and considered the extinction phenomenon for the fractional $p$-Laplace evolution equation, corresponding to $q=1$ and $1<p<2$, with tails defined by an essential supremum over time.

We also investigate the extinction phenomenon for the Cauchy--Dirichlet problem
\begin{align}
	\label{1.3}
	\begin{cases} 
		\partial_t \left(|u|^{q-1}u\right)+ \mathcal{L} u = 0 & \text{in } E_T \\ 
		u = 0 & \text{in } \mathbb{R}^N\backslash E\times (0, T) \\ 
		u(\cdot, 0) = u_0 & \text{in } \mathbb{R}^N, 
	\end{cases}
\end{align}
where $E$ is a bounded domain and the initial datum $u_0\in W_{0}^{s,p}(E)\cap L^{q+1}(E)$. The extinction phenomenon, namely the vanishing of solutions in finite time, is a characteristic feature of fast diffusion. In the nonlocal framework, extinction in finite time was established for the fractional porous medium equation in the Dirichlet setting in \cite{KL11} and for the corresponding Cauchy problem in \cite{PQR12}. Related results for the fractional $p$-Laplacian evolution equation posed in $\mathbb{R}^N$ were obtained in \cite{V21,V22} by means of suitable comparison principles. Moreover, regularization and decay estimates, as well as finite-time extinction, were analyzed in \cite{CH21} for a doubly nonlinear nonlocal evolution problem driven by the fractional $p$-Laplacian with homogeneous Dirichlet boundary conditions.

 
Motivated by these developments, we establish integral Harnack-type estimates involving nonlocal tail terms integrated in time for locally nonnegative weak solutions to \eqref{1.1} in the fast diffusion range. The main difficulty is to choose appropriate test functions and truncation levels that account for the nonlinear time derivative while controlling the nonlocal contributions via tail terms, thereby obtaining the energy estimates needed for the iteration and the derivation of integral Harnack inequalities. We first prove qualitative local boundedness under suitable conditions and then derive quantitative local bounds by combining refined energy estimates with a parabolic Sobolev-type embedding and a De Giorgi iteration. A key ingredient is the use of truncation levels that depend on time to absorb the tail in energy estimates. We then establish an $L^q$--$L^q$ Harnack-type inequality and combine it with the local boundedness estimates to obtain an $L^q$--$L^\infty$ counterpart.
Finally, we apply the integral Harnack estimates to study the decay of solutions as they approach the extinction time. We also investigate the finite extinction time for the associated
Cauchy--Dirichlet problem by using differential inequalities.

	\smallskip
	To state the definition of weak solutions, we denote by $W_0^{s,p}(E)$ the Sobolev space with zero boundary values, namely 
	\begin{align*}
		W_0^{s,p}(E):=\left\{u \in W^{s,p}(\mathbb{R}^N): u=0 \text { in } \mathbb{R}^N \backslash E \right\}.
	\end{align*}
	Moreover, we introduce the tail space
	\begin{align*}
		L_\alpha^m(\mathbb{R}^{N}):=\left\{v \in L_{\rm{loc}}^m(\mathbb{R}^{N}): \int_{\mathbb{R}^N} \frac{|v(x)|^m}{1+|x|^{N+\alpha}}\,dx<+\infty\right\}, \quad m>0 \text{ and } \alpha>0.
	\end{align*}
	The parabolic nonlocal tail is of the form
	\begin{equation}
		\label{1.4}
		\mathcal T_{\pm}(u;x_0,R;t_1,t_2)
		:=\int_{t_1}^{t_2}\int_{\mathbb R^N\setminus K_R(x_0)}
		\frac{u_{\pm}(y,\tau)^{p-1}}{|y-x_0|^{N+sp}}\,dyd\tau.
	\end{equation}
	Note that $\mathcal T_{\pm}(u;x_0,R;t_1,t_2)$ is well-defined for any $u\in L^{p-1}(t_1,t_2;L_{sp}^{p-1}(\mathbb{R}^N))$.
	
	\smallskip
	
	We now state the notion of weak solutions to equation \eqref{1.1}.
\begin{definition}
	We say that a function $u$ is a local weak subsolution (super-) to the doubly nonlinear nonlocal parabolic equation \eqref{1.1} if 
	\begin{align*}
		u\in C_{\mathrm{loc}}(0,T; L^{q+1}_{\mathrm{loc}}(E))\cap L^p_{\mathrm{loc}}(0,T;W^{s,p}_{\mathrm{loc}}(E))\cap L^{p-1}_{\mathrm{loc}}(0,T;L_{sp}^{p-1}(\mathbb{R}^N)),
	\end{align*}
	and for every $E'\Subset E$, every $0<t_1<t_2<T$, there holds
	\begin{align}
		\label{1.5}
		-\int_{t_1}^{t_2}\int_{E'}|u|^{q-1}u\,\partial_\tau\varphi\,dxd\tau+\int_{t_1}^{t_2}\mathcal E(u,\varphi,\tau)\,d\tau
		\leq (\geq)\,0,
	\end{align}
	for all nonnegative test functions $\varphi \in W_0^{1, q+1}(t_1,t_2;L^{q+1}(E'))\cap L^p(t_1,t_2;W_0^{s,p}(E'))$, where
\begin{align*}
	\mathcal E(u,\varphi,\tau)
	:=\frac12\int_{\mathbb R^N}\int_{\mathbb R^N}
	&K(x,y,\tau)|u(x,\tau)-u(y,\tau)|^{p-2}
	(u(x,\tau)-u(y,\tau))\\
	&\times(\varphi(x,\tau)-\varphi(y,\tau))\,dydx.
\end{align*}
	A function $u$ is called a weak solution to \eqref{1.1} if it is both a weak subsolution and a weak supersolution.
\end{definition}
	The notion of weak solutions to the Cauchy--Dirichlet problem \eqref{1.3} is defined as follows.
	\begin{definition}
		A function
		\begin{align*}
			u\in C([0,T]; L^{q+1}(E))\cap L^p(0,T;W_0^{s,p}(E))
			\end{align*}
		 is called a weak solution to \eqref{1.3} if 
			\begin{align*}
			\iint_{E_T}(|u_0|^{q-1}u_0-|u|^{q-1}u) \partial_\tau \varphi\,dxd\tau+\int_{0}^{T} \mathcal{E}(u, \varphi, \tau)\,d\tau=0,
			\end{align*}
			for all $\varphi \in W^{1, q+1}(0,T;L^{q+1}(E))\cap L^p(0,T;W_0^{s,p}(E))$ such that $\varphi(\cdot,T)=0$.
		\end{definition}
	
	At this stage, we state our main results. The first one concerns the local boundedness of weak subsolutions.
	
	\begin{theorem}
		\label{thm-1-3}
		Let $0<p-1<q$ and let $r\geq 1$ be such that 
		\begin{align}
			\label{1.6}
			\lambda_r:=N(p-q-1)+rsp>0.
		\end{align}
		Suppose that $u$ is a weak subsolution to \eqref{1.1} satisfying $u\geq0$ in $Q_{\rho,\theta}:=K_\rho(x_0)\times(t_0-\theta,t_0]	\Subset E_T$. For $r>m:=p\frac{N+s(q+1)}{N}$, we further assume that $u$ is qualitatively locally bounded. Then we have
		\begin{align}
			\label{1.7}
			\underset{Q_{\frac{1}{2} \rho, \frac{1}{2} \theta}}{\operatorname{ess} \sup } \ u \leq & \gamma\left(\frac{\rho^{sp}}{\theta}\right)^{\frac{N}{\lambda_r}}\left(\mint\!\!\mint_{Q_{\rho,\theta}} u^r\,dxd\tau\right)^{\frac{sp}{\lambda_r}}+\gamma\left(\frac{\theta}{\rho^{sp}}\right)^{\frac{1}{q+1-p}}\nonumber\\
			&+\gamma\mathcal T_+(u;x_0,\rho/2;t_0-\theta,t_0)^{\frac1q},
		\end{align}
		where $\gamma>0$ depends only on $N,p,s,q,\Lambda,r$.	
	\end{theorem}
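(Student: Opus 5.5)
We prove \eqref{1.7} by a De Giorgi iteration on shrinking cylinders, run first in the $L^m$ scale and then transferred to $L^r$ by interpolation. The starting point is an energy estimate for $(u-k)_+$.

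\textbf{Energy estimate.} Fix radii $\tfrac12\rho\le\rho_{n+1}<\rho_n\le\rho$ and times $\tfrac12\theta\le\theta_{n+1}<\theta_n\le\theta$, and set $Q_n=K_{\rho_n}(x_0)\times(t_0-\theta_n,t_0]$. Take a cutoff $\zeta$ in $Q_n$ that equals $1$ on $Q_{n+1}$, with
\begin{align*}
|\nabla\zeta|\lesssim \frac{2^n}{\rho},\qquad |\partial_t\zeta|\lesssim\frac{2^n}{\theta}.
\end{align*}
We test \eqref{1.5} (after a Steklov averaging) with $\varphi=(u-k)_+\zeta^p$, where the levels $k$ may depend on $t$, as announced in the introduction.

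The time term $\partial_t(u^q)(u-k)_+$ is rewritten through the primitive
\begin{align*}
\mathfrak g_+(u,k)=q\int_k^u s^{q-1}(s-k)\,ds.
\end{align*}
This quantity is comparable to $(u+k)^{q-1}(u-k)_+^2$, and for $u\ge 2k$ it is bounded below by $u^{q+1}$. If the level is written as $k(t)=k+\ell(t)$, the derivative $\ell'(t)$ contributes an extra term of the form $-\ell'(t)\,q\int u^{q-1}(u-k)_+\zeta^p\,dx$.

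The nonlocal part splits into three pieces. The local Gagliardo seminorm of $(u-k)_+\zeta$ is the good term. The cutoff error is bounded by $\rho^{-sp}2^{np}\iint (u-k)_+^p$. The tail term is controlled by
\begin{align*}
2^{n(N+sp)}\rho^{-sp}\,\mathcal{T}_+(u;x_0,\rho/2;\cdot)(t)\int_{K_{\rho_n}}(u-k)_+\,dx,
\end{align*}
where the tail is taken pointwise in time.

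We then choose $\ell'(t)$ proportional to $\rho^{-N-sp}\!\int_{\mathbb R^N\setminus K_{\rho/2}}u_+^{p-1}(y,t)|y-x_0|^{-N-sp}dy$, divided by $k^{q-1}$. With this choice the tail contribution is absorbed by the $\ell'$ term, since $u^{q-1}\ge k^{q-1}$ on $\{u>k\}$ when $q\ge1$; for $q<1$ the same works with $u^{q-1}$ evaluated on $\{u\le 2k\}$. The total increase of the level over the interval is then
\begin{align*}
\int_{t_0-\theta}^{t_0}\ell'\,dt\lesssim k^{1-q}\,\mathcal T_+(u;x_0,\rho/2;t_0-\theta,t_0).
\end{align*}
This increase is at most $k/2$ once $k\ge\gamma\,\mathcal T_+^{1/q}$, which is exactly the origin of the third term in \eqref{1.7}.

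\textbf{Iteration.} We combine the energy estimate with the parabolic fractional Sobolev embedding, which gives gain exponent $m=p\frac{N+s(q+1)}{N}$ after using $\sup_t\int u^{q+1}$ from the time term. Taking levels $k_n=k(1-2^{-n})$ (plus the shift $\ell$), we obtain a recursion
\begin{align*}
Y_{n+1}\le \gamma b^n\Big(\frac{1}{\rho^{sp}k^{p-q-1}}+\frac1\theta\Big)^{1+\frac{sp}{N}}k^{-\dots}Y_n^{1+\frac{sp}{N}}.
\end{align*}
The mismatch of homogeneities ($u^{q+1}$ in the time term versus $u^p$ in the energy) is handled with the assumption $q>p-1$: we restrict to $k\ge(\theta/\rho^{sp})^{1/(q+1-p)}$. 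Above that level the time scale dominates, so the two factors combine into a single power of $\theta^{-1}$. This restriction produces the second term in \eqref{1.7}. The fast geometric convergence lemma then yields the bound with $r=m$.

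\textbf{Passing to general $r$.} For $r>m$, we use the qualitative boundedness and the interpolation $u^m\le \|u\|_\infty^{m-r}u^r$. In the other direction, for $r<m$ with $\lambda_r>0$, we interpolate $u^m\le M^{m-r}u^r$ with $M=\sup_{Q_n}u$. Either way we obtain an inequality of the form
\begin{align*}
M_{n+1}\le \tfrac12 M_n+\gamma b^n(\cdots)\Big(\mint\!\!\mint u^r\Big)^{sp/\lambda_r}+\cdots .
\end{align*}
The standard iteration lemma on nested cylinders between $Q_{\rho/2,\theta/2}$ and $Q_{\rho,\theta}$ absorbs the $\frac12 M_n$ term. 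The positivity of $\lambda_r$ is what makes the exponent algebra close: the exponents come out as $\frac{N}{\lambda_r}$ and $\frac{sp}{\lambda_r}$, as in \eqref{1.7}.

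\textbf{Main obstacle.} The delicate step is the time-dependent level. Only the time-integrated tail is available, and it must be absorbed against the nonlinear term $u^{q-1}$ while keeping the comparability properties of $\mathfrak g_+$ uniform along the iteration. The case $q<1$ is the harder one. There I would first try to estimate $u^{q-1}$ from below by $(2k)^{q-1}$, using a preliminary truncation at $\{u\le 2k\}$.
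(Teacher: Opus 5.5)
Your architecture matches the paper's: levels $k+\ell(t)$ whose derivative absorbs a time-integrated tail, Lemma~\ref{lem-2-2} with $h=q+1$, the restriction $k\geq(\theta/\rho^{sp})^{1/(q+1-p)}$, Lemma~\ref{lem-2-5}, then Young and Lemma~\ref{lem-2-6}. However, your treatment of the nonlocal term has a genuine gap.

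\textbf{The near-field interaction cannot be absorbed by $\ell'$.} You bound the whole interaction with $\mathbb R^N\setminus K_{\rho_n}$ by $2^{n(N+sp)}$ times the pointwise tail over $\mathbb R^N\setminus K_{\rho/2}$, and then absorb it with a single $n$-independent $\ell$. Absorption at step $n$ requires $\ell'(t)k^{q-1}\gtrsim 2^{n(N+sp)}\int_{\mathbb R^N\setminus K_{\rho/2}}u_+^{p-1}(y,t)|y-x_0|^{-N-sp}dy$, which fails for large $n$. An $n$-dependent $\ell_n$ instead raises the levels by $2^{n(N+sp)}k^{1-q}\mathcal T_+$, so they are unbounded. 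The factor is genuine: for $y$ just outside $K_{\rho_n}$ one has $|x-y|\sim2^{-n}\rho$ while $|y-x_0|\sim\rho$.

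Nor can the factor be parked in $b^n$ as in elliptic De Giorgi arguments. With only $\int\text{tail}\,d\tau$ available, the term becomes $2^{n(N+sp)}\mathcal T_+\sup_\tau\int_{K_n}(u-\widetilde k_n)_+dx$, which is linear in the energy and gives no superlinear gain.

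The paper absorbs by $\ell'$ only the far field $\mathbb R^N\setminus K_\rho$. There $|x-y|\geq c(1-\sigma)|y-x_0|$ uniformly in $j$, because all cutoffs are supported in $K_{\widehat\rho_0}$ with $\widehat\rho_0=\rho-(1-\sigma)\rho/8$. The near field $K_\rho\setminus K_{\rho_j}$ is bounded as in \eqref{3.12} by $\frac{2^{spj}}{(1-\sigma)^{sp}\rho^{sp}}\|u\|_{\infty,Q_0}^{p-1}\iint_{Q_j}(u-\widetilde k_j)_+$. This term enters the recursion, forces $k\geq(\theta\rho^{-sp}\|u\|_\infty^{p-1})^{1/q}$, and is removed at the end by Young (since $\frac{p-1}{q}<1$) and Lemma~\ref{lem-2-6}.

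All of this presupposes $u\in L^\infty$, which Theorem~\ref{thm-1-3} does not assume for $r\leq m$. The paper therefore first proves Lemma~\ref{lb-qualitative}, controlling the near field by $\sup_\tau\int_{K_\rho}u^{p-1}dx$ (finite because $u\in C_{\rm loc}(0,T;L^{q+1}_{\rm loc})$), which gives qualitative boundedness when $\lambda_{q+1}>0$. Your plan has no substitute for this step, yet your final appeal to the iteration lemma needs $M<\infty$.

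\textbf{The case $0<q<1$ and smaller points.} For $0<q<1$, absorption needs a lower bound for $u^{q-1}$ on $\{u>\widetilde k\}$. That is an upper bound on $u$ exactly where the iteration lives, so a truncation at $\{u\leq 2k\}$ does not help. The paper either puts $\|u\|_\infty^{1-q}$ into $\ell$ and later uses $M^{1-q}\mathcal T_+\leq\frac14M+\gamma\mathcal T_+^{1/q}$, or, in Lemma~\ref{lb-qualitative}, uses $w_j=(u^q-\widetilde k_j^q)_+^{1/q}$ with levels $k_j^q=k^q(1-2^{-j})^q+\ell$. Then $\partial_\tau(u^q)w_j=\frac{q}{q+1}\partial_\tau(w_j^{q+1})+(\widetilde k_j^q)'w_j$ with $(\widetilde k_j^q)'\geq\ell'/2$, and no weight in $u$ appears.

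For $r>m$ your interpolation is reversed: $u^m\leq\|u\|_\infty^{m-r}u^r$ is false. Applying Jensen to the $L^m$ bound instead gives the wrong homogeneity. The recursion must be run in $L^r$, with $\mint\!\!\mint(u-k_{j+1})_+^r\leq\|u\|_\infty^{r-m}\mint\!\!\mint(u-k_{j+1})_+^m$ and the right side kept in $L^r$. This is allowed because $r>m$ and $\lambda_r>0$ force $r>q+1$, and the resulting power of $\|u\|_\infty$ is below $1$ exactly when $\lambda_r>0$.

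The coefficient in your recursion should be $k^{p-q-1}\rho^{-sp}$, not $(\rho^{sp}k^{p-q-1})^{-1}$. With your version, the restriction on $k$ would make the spatial term dominate rather than the time term. Also, the $\rho^{-N-sp}$ in $\ell'$ is dimensionally spurious.

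On the positive side, once \eqref{3.22} with $r=m$ and qualitative boundedness are available, your treatment of $r<m$ is correct. Interpolating $\mint\!\!\mint u^m\leq M^{m-r}\mint\!\!\mint u^r$ works because $(m-r)sp<\lambda_m\iff\lambda_r>0$. This merges the paper's two subcases $q+1\leq r\leq m$ and $r<q+1$ into one.
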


	\begin{remark}
		\label{rem-1-4}
		We shall prove the local boundedness result by distinguishing three cases. The first two cases deal with the range $r\leq m$, in which $u\in L_{\rm{loc}}^m(E_T)$ follows from the Sobolev-type embedding in Lemma \ref{lem-2-2}. For $r>m$, we additionally assume that $u$ is locally bounded. This allows us to control the $L^r$ quantity in the iteration by
		using the $L^m$ estimate provided by Lemma~\ref{lem-2-2}. Notice that there exists some $r\geq 1$ satisfying $r\leq m$ and $\lambda_r>0$ if and only if $\lambda_m>0$. Owing to $$\lambda_m=(N+sp)(m-q-1)=\frac{N+sp}{N}\lambda_{q+1},$$ $\lambda_m>0$ is equivalent to $m>q+1$ or $\lambda_{q+1}>0$, namely
		\begin{align*}
			q<\frac{N(p-1)+sp}{(N-sp)_+}.
		\end{align*}
		Consequently, we can apply Theorem \ref{thm-1-3} with $r=q+1$ without assuming a priori  that $u$ is locally bounded in this case.
	\end{remark}
	
As a consequence of Remark \ref{rem-1-4}, we obtain the following corollary.
	\begin{corollary}
		\label{cor-1-5}
		Let $0<p-1<q<\frac{N(p-1)+sp}{(N-sp)_+}$. Then every nonnegative weak subsolution to \eqref{1.1} is locally bounded.
	\end{corollary}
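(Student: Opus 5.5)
The plan is to apply Theorem \ref{thm-1-3} with the exponent $r=q+1$, following the observation in Remark \ref{rem-1-4}. First I will check that this choice satisfies the hypotheses of the theorem without assuming boundedness a priori. Since
\[
\lambda_{q+1}=N(p-q-1)+(q+1)sp=N(p-1)+sp-q(N-sp),
\]
the standing assumption $q<\frac{N(p-1)+sp}{(N-sp)_+}$ gives $\lambda_{q+1}>0$. If $N\le sp$, the denominator is $0$ and $\lambda_{q+1}>0$ holds for every $q$, because $N(p-1)+sp>0$. Moreover $r=q+1\ge1$. Next I must place $r$ in the range $r\le m$, where Theorem \ref{thm-1-3} needs no qualitative boundedness assumption. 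From $\lambda_m=\frac{N+sp}{N}\lambda_{q+1}>0$ and $\lambda_m=(N+sp)(m-q-1)$ we get $m>q+1=r$. Hence the case $r>m$, which would require a priori boundedness, is avoided.

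Second, I will verify that the right-hand side of \eqref{1.7} is finite for any $Q_{\rho,\theta}\Subset E_T$. The definition of weak solution gives $u\in C_{\mathrm{loc}}(0,T;L^{q+1}_{\mathrm{loc}}(E))$, so $\iint_{Q_{\rho,\theta}}u^{q+1}\,dxd\tau<\infty$. It also gives $u\in L^{p-1}_{\mathrm{loc}}(0,T;L^{p-1}_{sp}(\mathbb R^N))$. Using $|y-x_0|\ge\rho/2$ off $K_{\rho/2}(x_0)$, together with $1+|y|^{N+sp}\lesssim_{x_0,\rho}|y-x_0|^{N+sp}$ there, this yields $\mathcal T_+(u;x_0,\rho/2;t_0-\theta,t_0)<\infty$.

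Finally, any compact subset of $E_T$ is covered by finitely many cylinders $Q_{\frac12\rho,\frac12\theta}$ whose doubled versions $Q_{\rho,\theta}$ lie in $E_T$. On each such cylinder $u\ge0$, since the subsolution is nonnegative. Estimate \eqref{1.7} then bounds $\operatorname{ess\,sup}u$ on each small cylinder, and $u\ge0$ provides the lower bound, so $u\in L^\infty_{\mathrm{loc}}(E_T)$.

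The only delicate point is confirming that the theorem in the regime $r\le m$ genuinely uses no qualitative boundedness. According to Remark \ref{rem-1-4}, this is exactly the role of Lemma \ref{lem-2-2}, which supplies $u\in L^m_{\mathrm{loc}}$ from the energy space. Once that is accepted, the corollary reduces to the exponent bookkeeping above.
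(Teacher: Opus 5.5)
Your proposal is correct and follows the paper's route. As in Remark~\ref{rem-1-4}, Corollary~\ref{cor-1-5} comes from applying Theorem~\ref{thm-1-3} with $r=q+1$: the bound on $q$ is equivalent to $\lambda_{q+1}>0$, hence to $m>q+1$ via $\lambda_m=\frac{N+sp}{N}\lambda_{q+1}$, so no a priori boundedness is needed. On your ``delicate point'': in the paper, the qualitative boundedness for $r\le m$ actually comes from Lemma~\ref{lb-qualitative}, a De Giorgi iteration with time-dependent levels that uses Lemma~\ref{lem-2-2} under $\lambda_{q+1}>0$; that lemma combined with your covering argument already yields the corollary directly.
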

	
	Building on the local boundedness result, we derive an integral-type Harnack inequality, which serves as a key ingredient in obtaining the decay estimate.
	
	\begin{theorem}
		\label{thm-1-6}
		Let $0<p-1<q$. Suppose that $u$ is a weak solution to \eqref{1.1} satisfying $u\geq 0$ in $K_{4\rho}(x_0)\times(t_0-\theta,t_0]\Subset E_T$. Then the following estimate holds
	\begin{align*}
		&\sup_{t_0-\theta<\tau<t_0}\int_{K_\rho(x_0)} u^q(x,\tau)\,dx 
		\leq \gamma\left[\inf _{t_0-\theta<\tau<t_0} \int_{K_{2\rho}(x_0)} u^q(x,\tau) \,dx\right]\\
		&\qquad\qquad\quad+\gamma \rho^N\left[\left(\frac{\theta}{\rho^{sp}}\right)^{\frac{q}{q+1-p}}+\mathcal{T}_+(u;x_0,\rho/2;t_0-\theta,t_0)+\mathcal{T}_-(u;x_0,\rho/2;t_0-\theta,t_0)\right], 
	\end{align*}
		where $\gamma>0$ depends only on $N, p, s, q, \Lambda$.
	\end{theorem}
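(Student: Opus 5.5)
The plan is to adapt the classical $L^1$--$L^1$ Harnack argument of DiBenedetto for singular equations to $u^q$. Take a cutoff $\zeta\in C_c^\infty(K_{2\rho}(x_0))$ with $\zeta\equiv1$ on $K_\rho(x_0)$, $0\le\zeta\le1$, $|\nabla\zeta|\le 2/\rho$. The test function would be $\varphi=\zeta^p\,(u+\varepsilon)^{0}$; more precisely, we test the weak formulation \eqref{1.5} with $\zeta^p$ itself (after a Steklov/mollification in time, which is admissible since $\zeta$ is time independent). Here we use that $u$ is a solution, not merely a sub- or supersolution, so that we have an equality. For any $t_1,t_2\in(t_0-\theta,t_0)$ this yields
\begin{align*}
\int_{K_{2\rho}} u^q\zeta^p(x,t_2)\,dx-\int_{K_{2\rho}} u^q\zeta^p(x,t_1)\,dx=-\int_{t_1}^{t_2}\mathcal E(u,\zeta^p,\tau)\,d\tau .
\end{align*}
Here $u\ge0$ on $K_{4\rho}$, so $|u|^{q-1}u=u^q$ on the support of $\zeta$. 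Choosing $t_1$ (respectively $t_2$) as the time where the infimum is nearly attained and the other as the time of the supremum, it suffices to bound $\int_{t_0-\theta}^{t_0}|\mathcal E(u,\zeta^p,\tau)|\,d\tau$ by $\tfrac12\sup_\tau\int_{K_{2\rho}}u^q\zeta^p\,dx$ plus the error terms on the right-hand side of the statement.

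\emph{Splitting the energy.} We write $\mathcal E=\iint_{K_{4\rho}\times K_{4\rho}}+2\iint_{K_{2\rho}\times(\mathbb R^N\setminus K_{4\rho})}$. In the nonlocal part, $|x-y|\gtrsim|y-x_0|$ and $|u(x)-u(y)|^{p-1}\le \gamma(u(x)^{p-1}+u_+(y)^{p-1}+u_-(y)^{p-1})$. The $u(y)$-contributions give $\gamma\rho^N\mathcal T_\pm(u;x_0,\rho/2;\cdot)$ after enlarging the complement domain ($\rho/2<4\rho$). The $u(x)^{p-1}$ part gives $\gamma\rho^{-sp}\int\!\!\int_{K_{2\rho}}u^{p-1}$. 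For the local part, the mean value estimate $|\zeta^p(x)-\zeta^p(y)|\le \gamma\rho^{-1}|x-y|(\zeta^{p-1}(x)+\zeta^{p-1}(y))$ would only give $\int\!\!\int|u(x)-u(y)|^{p-1}\ldots$, which is not controlled by $L^{q}$ alone. We therefore need a gradient-type energy bound. I would first prove an auxiliary estimate by testing with $\varphi=(u+\delta)^{\,q-p+1-?}$-type powers; the natural choice in the fast diffusion range ($p-1<q$) is $\varphi=\zeta^p (u+\varepsilon)^{-\alpha}$ with $\alpha\in(0,1)$ chosen so that the time term involves $(u+\varepsilon)^{q+1-\alpha}$-type quantities with exponent at most $q$. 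Concretely, take $1-\alpha+q-1=q-\alpha$, $\alpha$ small, and pass $\varepsilon\to0$ at the end. The algebraic inequality
\begin{align*}
|a-b|^{p-2}(a-b)\big(a_\varepsilon^{-\alpha}\zeta^p(x)-b_\varepsilon^{-\alpha}\zeta^p(y)\big)\le -c|a_\varepsilon^{\frac{p-1-\alpha}{p}}-b_\varepsilon^{\frac{p-1-\alpha}{p}}|^p\min\zeta^p+\gamma\,(a_\varepsilon^{p-1-\alpha}+b_\varepsilon^{p-1-\alpha})|\zeta(x)-\zeta(y)|^p
\end{align*}
(standard in nonlocal logarithmic and Caccioppoli lemmas) controls $\iint|u(x)-u(y)|^p(u+\varepsilon)^{-1-\alpha}\ldots$. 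Then Hölder lets us write
\begin{align*}
|u(x)-u(y)|^{p-1}\le\big(|u(x)-u(y)|^p(u_\varepsilon(x)\vee u_\varepsilon(y))^{-1-\alpha}\big)^{\frac{p-1}{p}}\,(u_\varepsilon(x)\vee u_\varepsilon(y))^{\frac{(1+\alpha)(p-1)}{p}},
\end{align*}
and, after Hölder and Young, the local energy is bounded by $\gamma\rho^{-sp}\int\!\!\int_{K_{4\rho}}u^{p-1-\alpha}$-type and $u^{q-\alpha}$-type integrals plus tails.

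\emph{Closing the estimate.} All the remaining local terms have the form $\rho^{-sp}\int_{t_0-\theta}^{t_0}\int_{K_{4\rho}}u^{\beta}$ with $\beta<q$ (namely $\beta=p-1$ or $p-1-\alpha$). By Hölder in space,
\begin{align*}
\rho^{-sp}\theta\,\rho^{N(1-\beta/q)}S^{\beta/q},\qquad S:=\sup_\tau\int_{K_{4\rho}}u^q\,dx,
\end{align*}
and Young's inequality with exponents $q/\beta$ and $q/(q-\beta)$ gives $\epsilon S+\gamma_\epsilon\rho^N(\theta/\rho^{sp})^{q/(q-\beta)}$. For $\beta=p-1$ this is exactly the term $(\theta/\rho^{sp})^{q/(q+1-p)}$ in the statement. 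The sup is over $K_{4\rho}$, while the left side has $K_\rho$. I would therefore run the argument on a family of nested balls $K_{\sigma\rho}\subset K_{\sigma'\rho}$, $1\le\sigma<\sigma'\le2$ (cutoff gradient $\sim((\sigma'-\sigma)\rho)^{-1}$), and absorb $\epsilon S$ by the standard iteration lemma on nested radii. The infimum term lives on the larger ball; I expect a covering argument to reduce the infimum over $K_{\sigma'\rho}$ to $K_{2\rho}$, with balls of radius comparable to $\rho$ centred in $K_{2\rho}$, each having $K_{4\rho}$-type neighbourhoods inside the nonnegativity set.

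\emph{Main obstacle.} I expect the hardest step to be the control of the local part of $\mathcal E(u,\zeta^p)$ via the weighted energy estimate. This requires choosing $\alpha$ so that all produced powers stay strictly below $q$ (possible since $q>p-1$) and so that the $\varepsilon\to0$ limit is justified. The absorption and iteration then parallels \cite{DGV12,BDG23}.
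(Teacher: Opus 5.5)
Your architecture matches the paper's: you test with a time-independent cutoff power, bound the interior flux by H\"older against a weighted energy coming from the test function $\zeta^p(u+\varepsilon)^{-\alpha}$, and then use Young and an iteration over cubes expanding from $K_\rho$ to $K_{2\rho}$. Leaving $\alpha$ free and omitting the weight $\tau^{1/p}$ is harmless. The paper uses $\alpha=\frac{q+1-p}{p}$ together with $\tau^{1/p}$ in Lemma~\ref{lem-4-1}, but the product of the two H\"older factors carries the power $\frac{(p-1)(q+1)}{pq}<1$ of $\sup_\tau\int u^q$ for every admissible $\alpha$.

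\textbf{The gap: the limit $\varepsilon\to0$.} Nonnegativity is assumed only in $K_{4\rho}$, and in the weighted energy estimate the exterior interaction must be bounded from above. The pairs $x\in\operatorname{supp}\zeta$, $y\notin K_{4\rho}$ with $u(y,\tau)<0$ then contribute
\[
\gamma\int_{t_0-\theta}^{t_0}\int_{\operatorname{supp}\zeta}\int_{\mathbb R^N\setminus K_{4\rho}}\frac{u_-^{p-1}(y,\tau)}{|x-y|^{N+sp}}\,\bigl(u(x,\tau)+\varepsilon\bigr)^{-\alpha}\zeta^p(x)\,dy\,dx\,d\tau .
\]
Since $u(\cdot,\tau)$ may vanish on a set of positive measure in $\operatorname{supp}\zeta$, the only available bound is $\gamma\varepsilon^{-\alpha}\rho^N\mathcal T_-$. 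This diverges as $\varepsilon\to0$ unless $\mathcal T_-=0$, so the weighted energy is uncontrolled and your H\"older step collapses; your ``plus tails'' hides exactly this term.

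The missing idea is to freeze the shift at the intrinsic level, as in \eqref{4.1}:
\[
\varepsilon=\kappa:=\max\Bigl\{\Bigl(\frac{\theta}{\rho^{sp}}\Bigr)^{\frac{1}{q+1-p}},\ \mathcal T_-(u;x_0,\rho/2;t_0-\theta,t_0)^{\frac1q}\Bigr\}.
\]
Then $\kappa^{-\alpha}\rho^N\mathcal T_-\le\rho^N\kappa^{q-\alpha}$. Together with $\theta/\rho^{sp}\le\kappa^{q+1-p}$, every term on the right of the weighted estimate is bounded by $\gamma M^{\frac{q-\alpha}{q}}\rho^{\frac{N\alpha}{q}}$, where $M:=\sup_\tau\int u^q+\rho^N\kappa^q$. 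The flux is then at most $\gamma M^{\frac{(p-1)(q+1)}{pq}}\bigl(\theta\rho^{-\lambda_q/q}\bigr)^{1/p}$. Young's inequality gives exactly $\epsilon\sup_\tau\int u^q+\gamma\rho^N\bigl[(\theta/\rho^{sp})^{\frac{q}{q+1-p}}+\mathcal T_-\bigr]$, which is how $\mathcal T_-$ enters the statement.

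\textbf{Secondary repairs in the iteration.}
\begin{itemize}
\item Your algebraic inequality gives the good term with $\min\{\zeta(x),\zeta(y)\}^p$, but the H\"older step needs the larger cutoff value. The paper gets $\max\{\xi(x),\xi(y)\}^p$ by splitting into $A_\tau,B_\tau$ and testing with $\xi^{p+1}$; its Young step \eqref{4.9} again uses $\kappa>0$. Alternatively, insert $\max\zeta^{p-1}\le\gamma(\min\zeta^{p-1}+|\zeta(x)-\zeta(y)|^{p-1})$.
\item If you apply H\"older on all of $K_{4\rho}\times K_{4\rho}$ with weight $u_\varepsilon(x)\vee u_\varepsilon(y)$, you produce $\sup_\tau\int_{K_{4\rho}}u^{(1+\alpha)(p-1)}$. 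This cannot be absorbed into $\sup_\tau\int_{K_{\sigma'\rho}}u^q$. The paper applies H\"older only to pairs in $\widehat K_j\times\widehat K_j\subset K_{j+1}\times K_{j+1}$. It discards interactions of favorable sign: for $t_1\le t_2$ only an upper bound on $\mathcal E$ is needed, so only $u(x)^{p-1}$ and $u_-(y)^{p-1}$ occur. For $t_1>t_2$ it places $u_+(y)^{p-1}$ with $y\in K_{4\rho}\setminus\widehat K_j$ into $\mathcal T_+(u;x_0,\rho/2;\cdot)$, which is why the tails are taken at radius $\rho/2$.
\item No covering argument is needed for the infimum. All cutoffs are supported in $K_{2\rho}$ and $u\ge0$ there, so simply take $t_2$ to realize $\inf_\tau\int_{K_{2\rho}}u^q$.
\end{itemize}
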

	
	\begin{remark}
		\label{rem-5-1}
		If the terminal time is fixed, the argument in the proof of Theorem~\ref{thm-1-6}
		also gives an estimate involving only the tail of the negative part of $u$. More precisely, let
		$0<t_1<t_2$ and assume that $u\geq0$ in $K_{4\rho}(x_0)\times(t_1,t_2]\Subset E_T $. Then
		\begin{align}
			\label{1.9}
			&\sup_{t_1<\tau<t_2}\int_{K_\rho(x_0)}u^q(x,\tau)\,dx\leq\gamma\int_{K_{2\rho}(x_0)}u^q(x,t_2)\,dx\nonumber\\
			&\quad\quad\quad+\gamma\rho^N\left[\left(\frac{t_2-t_1}{\rho^{sp}}\right)^{\frac{q}{q+1-p}}+\mathcal{T}_-(u;x_0,\rho/2,t_1,t_2)\right].
		\end{align}
		Indeed, in this case only the argument corresponding to
		$t_1<t_2$ in the proof of Theorem \ref{thm-1-6} is needed,
	and hence no positive tail term occurs.
	\end{remark}

	Combining the above $L^q$--$L^q$ Harnack-type inequality with the local boundedness estimate in Theorem \ref{thm-1-3}, we obtain the following pointwise version.
\begin{theorem}
	\label{thm-1-7}
	Let $0<p-1<q$ and $\lambda_q:=N(p-1-q)+spq>0$. Suppose that $u$ is a weak solution to \eqref{1.1} satisfying $u\geq 0$ in $K_{4\rho}(x_0)\times(t_0-\theta,t_0]\Subset E_T$. Then there exists a constant $\gamma>0$ depending only on $N, p, s, q, \Lambda$ such that
	\begin{align*}
		\operatorname*{ess\,sup}_{Q_{\rho/2,\theta/2}}u
		&\leq\gamma\theta^{-\frac{N}{\lambda_q}}
		\left[\inf_{t_0-\theta<\tau<t_0}\int_{K_{2\rho}(x_0)}u^q(x,\tau)\,dx\right]^{\frac{sp}{\lambda_q}}\\
		&\quad+\gamma
		\left(\frac{\theta}{\rho^{sp}}\right)^{\frac1{q+1-p}}\max\left\{1,\left(\frac{\theta}{\rho^{sp}}\right)^{-\frac{q}{q+1-p}}	\mathcal T_+(u;x_0,\rho/2;t_0-\theta,t_0)
		\right\}^{\frac{sp}{\lambda_q}}\\
		&\quad+\gamma
		\left(\frac{\theta}{\rho^{sp}}\right)^{\frac1{q+1-p}}
		\max\left\{
		1,
		\left(\frac{\theta}{\rho^{sp}}\right)^{-\frac{q}{q+1-p}}
		\mathcal T_-(u;x_0,\rho/2;t_0-\theta,t_0)
		\right\}^{\frac{sp}{\lambda_q}}.
	\end{align*}
\end{theorem}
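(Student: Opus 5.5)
The plan is to combine Theorem \ref{thm-1-3} with $r=q$ and Theorem \ref{thm-1-6}. The hypothesis $\lambda_q>0$ is exactly \eqref{1.6} with $r=q$. The difficulty is that $q$ may lie above $m$, so Theorem \ref{thm-1-3} needs qualitative boundedness. I would first note that $\lambda_q>0$ gives $q<\frac{N(p-1)}{(N-sp)_+}$, which is below the threshold of Corollary \ref{cor-1-5}. Hence $u$ is locally bounded and Theorem \ref{thm-1-3} with $r=q$ applies without extra assumptions. If $q<1$, which is not covered by $r\geq1$, I would rerun the De Giorgi argument or use a standard interpolation: bound $\int u^1$ by $\sup u^{1-q}\int u^q$, then absorb the supremum by Young's inequality and a covering/iteration lemma over shrinking cylinders. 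I would keep that argument in reserve.

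Applying Theorem \ref{thm-1-3} on $Q_{\rho,\theta}$ with $r=q$ gives
\begin{align*}
\operatorname*{ess\,sup}_{Q_{\rho/2,\theta/2}}u\le \gamma\Big(\frac{\rho^{sp}}{\theta}\Big)^{\frac N{\lambda_q}}\Big(\frac{1}{\rho^N\theta}\int_{t_0-\theta}^{t_0}\!\int_{K_\rho}u^q\,dxd\tau\Big)^{\frac{sp}{\lambda_q}}+\gamma\Big(\frac{\theta}{\rho^{sp}}\Big)^{\frac1{q+1-p}}+\gamma\mathcal T_+^{\frac1q}.
\end{align*}
I would bound the time integral by $\theta\sup_\tau\int_{K_\rho}u^q$. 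Then Theorem \ref{thm-1-6} bounds this by $\gamma I+\gamma\rho^N[(\theta/\rho^{sp})^{\frac q{q+1-p}}+\mathcal T_++\mathcal T_-]$, where $I$ is the infimum in the statement.

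Next I track the powers of $\rho$ and $\theta$. The factor $\rho^{spN/\lambda_q}\theta^{-N/\lambda_q}\rho^{-Nsp/\lambda_q}=\theta^{-N/\lambda_q}$ gives the first term $\gamma\theta^{-N/\lambda_q}I^{sp/\lambda_q}$. For the remaining terms, I write $\rho^N[\cdots]=\rho^N(\theta/\rho^{sp})^{\frac q{q+1-p}}\big[1+X_++X_-\big]$ with $X_\pm=(\theta/\rho^{sp})^{-\frac q{q+1-p}}\mathcal T_\pm$. Then
\begin{align*}
\theta^{-\frac N{\lambda_q}}\Big(\rho^N(\theta/\rho^{sp})^{\frac q{q+1-p}}\Big)^{\frac{sp}{\lambda_q}}=\Big(\frac{\theta}{\rho^{sp}}\Big)^{\frac{1}{\lambda_q}\left(\frac{spq}{q+1-p}-N\right)}=\Big(\frac{\theta}{\rho^{sp}}\Big)^{\frac1{q+1-p}},
\end{align*}
because $spq-N(q+1-p)=\lambda_q$. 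Using $(1+X_++X_-)^{a}\le 3^a\max\{1,X_+\}^a\max\{1,X_-\}^a$, or more simply splitting into the separate maxima up to constants, gives the two stated max-terms.

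It remains to absorb the stray terms $(\theta/\rho^{sp})^{\frac1{q+1-p}}$ and $\gamma\mathcal T_+^{1/q}$ from Theorem \ref{thm-1-3}. The first is trivially dominated by the max-term. For the second, write $\mathcal T_+^{1/q}=(\theta/\rho^{sp})^{\frac1{q+1-p}}X_+^{1/q}$. When $X_+\le1$ this is at most $(\theta/\rho^{sp})^{\frac1{q+1-p}}$. When $X_+>1$, the inequality $\frac1q\le\frac{sp}{\lambda_q}$ is needed, which is equivalent to $\lambda_q\le spq$, i.e.\ $N(p-1-q)\le0$, true since $q>p-1$; then $X_+^{1/q}\le X_+^{sp/\lambda_q}$. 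The main obstacle is the range-of-validity issue for $r=q$: the case $q<1$, and making sure the boundedness prerequisite holds. The bookkeeping itself is routine.
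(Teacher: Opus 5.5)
Your proposal is correct and follows the paper's proof: Theorem \ref{thm-1-3} with $r=q$ for $q\geq1$ (plus an interpolation and absorption step when $0<q<1$), then Theorem \ref{thm-1-6} to control the $L^q$ average, and the same exponent bookkeeping ending with $sp/\lambda_q\geq 1/q$. The only variation is in the $q<1$ step. The paper uses the $\sigma$-dependent estimate \eqref{3.22} with $r=q+1$ and $u^{q+1}\le M_{s_2}u^q$, giving Young exponent $\lambda_{q+1}/\lambda_q$, whereas your $r=1$ and $u\le M^{1-q}u^q$ give $\lambda_1/\lambda_q$; both close. Also, your worry that $q$ might exceed $m$ is moot, since $\lambda_{q+1}=N(m-q-1)=\lambda_q+sp>0$ already forces $m>q+1$.
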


		
We next turn to the Cauchy--Dirichlet problem \eqref{1.3} and establish the finite-time extinction of globally nonnegative weak solutions.
	
	\begin{theorem}
		\label{thm-1-8}
		Let $E\subset \mathbb{R}^N$ be a bounded open set. Let
		$0<p-1<q$, $sp<N$ and $\lambda_{q+1}:=N(p-1-q)+sp(q+1)$. Suppose that $u_0\in W_0^{s,p}(E)\cap L^\infty(\mathbb{R}^N)$,
	$u_0\geq 0$ in $E$,
		and that $u\geq 0$ is a bounded weak solution to \eqref{1.3} on every finite time interval.
		Then there exists a finite extinction time $T_*\geq0$ such that $u(\cdot,t)=0$ a.e. in $\mathbb{R}^N$
		for all $t\geq T_*$. Define
		$$\alpha=\frac{N(q+1-p)}{sp}, \qquad p_\alpha=\frac{N(q+1)}{N+s(q+1)}.$$
		Moreover, there exists a constant
		$\gamma>0$ depending only on $N,p,s,q,\Lambda$ such that the following estimates hold,
		\begin{itemize}
			\item[(i)] if $1<p<p_\alpha$, then $T_*\leq\gamma\|u_0\|_{L^\alpha(E)}^{q+1-p}$;
			\item[(ii)] if $p_\alpha\leq p$, then
			$T_*\leq\gamma\|u_0\|_{L^{q+1}(E)}^{q+1-p} |E|^{\frac{\lambda_{q+1}}{N(q+1)}}$.
		\end{itemize}
	\end{theorem}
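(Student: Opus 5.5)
We will argue through differential inequalities for Lebesgue norms of $u(\cdot,t)$. The first step is to test the weak formulation of \eqref{1.3} with powers of $u$. Formally we take $\varphi=u^{\beta}$ with $\beta\ge1$; rigorously we use Steklov averages in time and, where $\beta<1$ would be needed, truncations $(u+\varepsilon)^\beta-\varepsilon^\beta$. The time term becomes
\begin{align*}
\frac{d}{dt}\,\frac{q\beta}{q+\beta}\int_E u^{q+\beta}\,dx .
\end{align*}
For the nonlocal term we use the elementary algebraic inequality
\begin{align*}
|a-b|^{p-2}(a-b)(a^\beta-b^\beta)\ge c(\beta,p)\left|a^{\frac{\beta+p-1}{p}}-b^{\frac{\beta+p-1}{p}}\right|^p .
\end{align*}
Combined with \eqref{kernel}, it bounds this term from below by $c\,[u^{(\beta+p-1)/p}]_{W^{s,p}(\mathbb R^N)}^p$. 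Since $u=0$ outside $E$, the fractional Sobolev inequality (here $sp<N$) applies to $w:=u^{(\beta+p-1)/p}\in W^{s,p}_0(E)$. This yields
\begin{align*}
\frac{d}{dt}\int_E u^{q+\beta}\,dx+c\,\|u^{\frac{\beta+p-1}{p}}\|_{L^{p^*_s}(E)}^p\le 0,\qquad p_s^*=\frac{Np}{N-sp}.
\end{align*}
Boundedness of $u$ guarantees that all these quantities are finite on each finite interval.

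\textbf{Case (i).} Choose $\beta$ so that the two norms coincide, i.e. $q+\beta=\frac{(\beta+p-1)N}{N-sp}$. Solving gives $\beta=\frac{N(q+1-p)-sqp}{sp}$, so that $q+\beta=\frac{N(q+1-p)}{sp}=\alpha$. The condition $p<p_\alpha$ is exactly what makes $\beta>1-q$ positive enough; $p<p_\alpha$ is equivalent to $\alpha>q+1$, i.e. $\beta>1$. The gradient-type term then equals $\|u\|_{L^\alpha}^{(\beta+p-1)}=\|u\|_{L^\alpha}^{\alpha-(q+1-p)}$. Setting $Y(t)=\int_E u^\alpha\,dx$, we obtain
\begin{align*}
Y'\le -c\,Y^{1-\frac{q+1-p}{\alpha}} .
\end{align*}
Since $q+1-p>0$, the exponent is less than $1$, and integration gives
\begin{align*}
Y(t)^{\frac{q+1-p}{\alpha}}\le Y(0)^{\frac{q+1-p}{\alpha}}-ct .
\end{align*}
Hence $T_*\le\gamma\|u_0\|_{L^\alpha}^{q+1-p}$. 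Here $u_0\in L^\infty$ with bounded support ensures $Y(0)<\infty$.

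\textbf{Case (ii).} Take $\beta=1$, i.e. the energy test $\varphi=u$, which is legitimate directly. This gives
\begin{align*}
\frac{q}{q+1}\frac{d}{dt}\|u\|_{q+1}^{q+1}+c\,[u]_{W^{s,p}}^p\le0 .
\end{align*}
We want $\|u\|_{L^{q+1}(E)}\le C|E|^{\kappa}[u]_{s,p}$. When $p\ge p_\alpha$, i.e. $q+1\le p^*_s$ (note $p_\alpha\le p$ iff $q+1\le Np/(N-sp)$), this follows from Hölder and Sobolev:
\begin{align*}
\|u\|_{L^{q+1}}\le|E|^{\frac1{q+1}-\frac1{p_s^*}}\|u\|_{L^{p_s^*}} .
\end{align*}
Then $Z=\|u\|_{q+1}^{q+1}$ satisfies
\begin{align*}
Z'\le -c|E|^{-p\kappa}Z^{\frac{p}{q+1}},
\end{align*}
which integrates to $T_*\le\gamma\|u_0\|_{q+1}^{q+1-p}|E|^{p\kappa}$. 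Computing, $p\kappa=p\left(\frac1{q+1}-\frac{N-sp}{Np}\right)=\frac{\lambda_{q+1}}{N(q+1)}$, matching the claim.

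The main obstacle is justifying the computation $\frac{d}{dt}\int u^{q+\beta}$ for the nonlinear time derivative in case (i). Since only $u\in C([0,T];L^{q+1})$ is given, we would use the time mollification/exponential averaging commonly used for doubly nonlinear equations, together with boundedness of $u$ to handle the powers. Finite extinction then follows because $Y$ (resp. $Z$) is nonincreasing and vanishes from $T_*$ on, so $u(\cdot,t)=0$ in $E$ and hence in $\mathbb R^N$.
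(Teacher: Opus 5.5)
Your proposal is correct and follows the paper's proof essentially step by step. The paper also tests with a time mollification of $u^{\alpha-q}\psi$ (resp.\ $u\psi$). It then combines the algebraic inequality of Lemma~\ref{lem-2-7} with the fractional Sobolev inequality (plus H\"older on $E$ in case (ii)). This gives $U'+\gamma^{-1}U^{(p+\alpha-q-1)/\alpha}\le 0$ and $U'+\gamma^{-1}|E|^{-\lambda_{q+1}/(N(q+1))}U^{p/(q+1)}\le 0$, which are integrated exactly as you do.

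Two harmless slips. The time term should be $\frac{q}{q+\beta}\frac{d}{dt}\int_E u^{q+\beta}\,dx$, not $\frac{q\beta}{q+\beta}$. And even for $\beta=1$ the choice $\varphi=u$ is not directly admissible, because test functions need a time derivative, so the mollification argument is needed in case (ii) too, as the paper does.
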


	The following result, as a consequence of Theorems \ref{thm-1-6} and \ref{thm-1-7}, provides quantitative estimates for the decay of weak solutions to \eqref{1.1} as $t \rightarrow T_*$.
	\begin{theorem}
		\label{thm-1-9}
		Let $0<p-1<q$, and let $u$ be a local weak solution of \eqref{1.1} satisfying $u\geq 0$ in $\mathbb{R}^N\times(0,T)$. Let $T_*\in(0,T)$ be an
		extinction time for $u$, that is, $u(\cdot,t)=0$ a.e. in $\mathbb{R}^N$ for all  $t\in[T_*,T)$.
		Assume that \(K_{4\rho}(x_0)\Subset E\). Then there exists a constant
		\(\gamma>0\) depending only on \(N,p,s,q,\Lambda\) such that the following statements hold,
		
		\begin{itemize}
			\item[(i)] for every $0<t<T_*$,
			\begin{align*}
			\int_{K_{\rho}(x_0)} u^q(x, t) \, dx+\rho^N\mathcal T_+(u;x_0,\rho/2;t,T_*)
			\leq \gamma \left[ \frac{(T_* - t)^q}{\rho^{\lambda_q}} \right]^{\frac{1}{q+1-p}};
			\end{align*}
			\item[(ii)] if \( \lambda_q> 0 \), then for every $0<t<T_*$,
			\begin{align*}
			\sup_{K_{\rho/2}(x_0)}
			u\!\left(\cdot,\frac{t+T_*}{2}\right)
			\leq\gamma\left(\frac{T_*-t}{\rho^{sp}}\right)^{\frac1{q+1-p}}.
			\end{align*}
		\end{itemize}
	\end{theorem}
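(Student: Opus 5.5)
Part (i) follows from Remark~\ref{rem-5-1}, i.e.\ the fixed-terminal-time form \eqref{1.9} of Theorem~\ref{thm-1-6}, applied on an interval ending at the extinction time. Part (ii) follows from Theorem~\ref{thm-1-7} applied on a shortened cylinder.

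\textbf{Part (i).} Fix $0<t<T_*$. Since $u\ge0$ in $\mathbb R^N\times(0,T)$, we have $u_-\equiv0$, so every $\mathcal T_-$ term vanishes.

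Apply \eqref{1.9} with $t_1=t$ and $t_2=T_*$ (or $t_2=T_*+\varepsilon$ followed by $\varepsilon\to0$, using $u\in C_{\rm loc}(0,T;L^{q+1}_{\rm loc})$). Because $u(\cdot,T_*)=0$, the term $\int_{K_{2\rho}}u^q(x,t_2)\,dx$ vanishes. This gives
\[
\sup_{t<\tau<T_*}\int_{K_\rho(x_0)}u^q(x,\tau)\,dx\le\gamma\rho^N\Big(\frac{T_*-t}{\rho^{sp}}\Big)^{\frac q{q+1-p}}
=\gamma\Big[\frac{(T_*-t)^q}{\rho^{\lambda_q}}\Big]^{\frac1{q+1-p}}.
\]
The last equality holds because $N(q+1-p)-spq=-\lambda_q$.

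To bound the positive tail term $\rho^N\mathcal T_+(u;x_0,\rho/2;t,T_*)$ as well, I need an energy estimate. I would return to the energy estimate used in the proof of Theorem~\ref{thm-1-6}, with test function $\varphi=\zeta^p(x)$ (possibly times a function of $u$), where $\zeta$ is a cutoff on $K_\rho$. In that estimate the nonlocal part contributes, with a favorable sign, a term
\[
\int\!\!\int \zeta^p(x)\,u(y)^{p-1}|x-y|^{-N-sp}\,dy\,dx\,d\tau\ \gtrsim\ \rho^N\mathcal T_+ .
\]
Testing the weak formulation on $(t,T_*)$ with $\varphi=\zeta^p$, the time term equals $\int u^q(x,t)\zeta^p\,dx$, since the value at $T_*$ is zero. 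The local part is bounded by $\int_t^{T_*}\!\int_{K_\rho}\int_{K_\rho}|u(x)-u(y)|^{p-1}|\zeta^p(x)-\zeta^p(y)|K\,dy\,dx\,d\tau$. Using $u\ge0$ and $\zeta=0$ outside $K_\rho$ for the tail sign, and Young's inequality together with the $L^q$ bound already obtained, this reduces to the same right-hand side. This tail absorption is the delicate step. If the direct test function does not give the good sign, I would instead rerun the argument of Theorem~\ref{thm-1-6} and keep the tail term it discards.

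\textbf{Part (ii).} Set $t_0=(t+T_*)/2$ and $\theta=t_0-t=(T_*-t)/2$, and apply Theorem~\ref{thm-1-7} on $K_{4\rho}(x_0)\times(t_0-\theta,t_0]$. The point $\big(\cdot,(t+T_*)/2\big)$ lies in the top slice of $Q_{\rho/2,\theta/2}$; time continuity handles the essential supremum there.

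Again $\mathcal T_-=0$. By part (i), with $t$ replaced by $t_0-\theta$,
\[
\mathcal T_+(u;x_0,\rho/2;t_0-\theta,t_0)\le\gamma\rho^{-N}\big[(T_*-t)^q\rho^{-\lambda_q}\big]^{\frac1{q+1-p}}
=\gamma\Big(\frac{T_*-t}{\rho^{sp}}\Big)^{\frac q{q+1-p}},
\]
so each $\max\{1,\cdot\}$ factor in Theorem~\ref{thm-1-7} is bounded by a constant.

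For the first term, part (i) gives
\[
\inf_\tau\int_{K_{2\rho}}u^q\,dx\le\gamma\,\rho^N\Big(\frac{\theta}{\rho^{sp}}\Big)^{\frac q{q+1-p}},
\]
after covering $K_{2\rho}$ by finitely many balls $K_\rho$, or applying (i) with $2\rho$, which requires $K_{8\rho}\Subset E$; shrinking radii handles this. Substituting, the first term becomes
\[
\theta^{-N/\lambda_q}\rho^{Nsp/\lambda_q}\Big(\frac{\theta}{\rho^{sp}}\Big)^{\frac{qsp}{(q+1-p)\lambda_q}}.
\]
One checks that its exponent equals $\frac1{q+1-p}$: indeed $\frac{-N(q+1-p)+qsp}{(q+1-p)\lambda_q}=\frac1{q+1-p}$. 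Since $\theta\simeq T_*-t$, this gives the claim.
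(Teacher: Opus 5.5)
\textbf{Gap: the positive-tail bound in (i), which (ii) also needs.} Your mass bound in (i) matches the paper. Part (ii) is correct once (i) is available. The paper takes $t_0=T_*$, $\theta=T_*-t$, so the infimum term in Theorem~\ref{thm-1-7} vanishes by continuity and no covering is needed, but your variant also works.

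The problem is the bound on $\rho^N\mathcal T_+(u;x_0,\rho/2;t,T_*)$. Testing with a time-independent cutoff on $(t,T_*)$ is the right move, and the exterior interaction does come with the favorable sign. That is exactly the paper's flux identity, so your worry about the sign is unfounded. The whole difficulty is the interior flux
\[
\int_t^{T_*}\iint|u(x,\tau)-u(y,\tau)|^{p-1}\,|\zeta^p(x)-\zeta^p(y)|\,d\mu\,d\tau ,
\]
and ``Young's inequality together with the $L^q$ bound'' does not control it. In general you cannot bound $|u(x)-u(y)|^{p-1}$ crudely: for $sp\geq1$ the resulting factor $|x-y|^{1-N-sp}$ is not integrable. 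After Young/H\"older you therefore face a Gagliardo energy $\iint|u(x)-u(y)|^p\zeta^p\,d\mu$. The standard Caccioppoli estimate (testing with $u\zeta^p$) controls this only through $\int u^{q+1}(\cdot,t)$, and your $L^q$ mass bound does not control that. Nor is $u$ known to be bounded here: (i) holds without $\lambda_q>0$, and (ii) itself uses the tail bound.

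\textbf{Missing idea: the weighted energy estimate of Lemma~\ref{lem-4-1}.} Test with $(\tau-t)^{1/p}(u+\kappa)^{-\frac{q+1-p}{p}}\xi^p$, where $\kappa=\big(\frac{T_*-t}{\rho^{sp}}\big)^{\frac1{q+1-p}}$. This bounds
\[
\int_t^{T_*}(\tau-t)^{1/p}\iint|u(x)-u(y)|^p(u(x)+\kappa)^{-\frac{q+1}{p}}\xi^p\,d\mu\,d\tau
\]
by a power of $\sup_\tau\int_{K_\rho}u^q+\kappa^q\rho^N$ alone. One then splits the interior flux by H\"older with the compensating weights $(\tau-t)^{\pm\frac{p-1}{p^2}}$ and $(u+\kappa)^{\pm\frac{(q+1)(p-1)}{p^2}}$, as in \eqref{4.17}--\eqref{4.19}. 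Combined with your mass bound, this gives the interior flux $\leq\gamma\rho^N\big(\frac{T_*-t}{\rho^{sp}}\big)^{\frac q{q+1-p}}$.

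Your fallback (rerun Theorem~\ref{thm-1-6} and keep the discarded tail) has the right shape. However, the $J_2$ estimate there is precisely this Lemma~\ref{lem-4-1} step, so it is the substance of the tail bound, not a detail you can leave to ``Young''.

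\textbf{Also watch the radius.} To produce the tail over $\mathbb R^N\setminus K_{\rho/2}$, the paper takes the cutoff supported in $K_{\rho/4}$, uses $K_{\rho/2}$ as the interior region, and applies Lemma~\ref{lem-4-1} with $\sigma=1/8$. A cutoff living on all of $K_\rho$ only yields the tail beyond $K_\rho$. You would then have to add back the annulus $K_\rho\setminus K_{\rho/2}$ separately, via H\"older and the mass bound.
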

	
	Part (i) provides decay estimates for both the local mass and the tail of the positive part of $u$ throughout the fast diffusion range, while part (ii) gives a local supremum decay estimate under the additional condition $\lambda_q>0$. Neither result requires any additional assumption on the decay rate of the exterior values.

This paper is organized as follows. Section \ref{sec2} introduces the basic notation and presents several preliminary results. In Section \ref{sec3}, we first give a Caccioppoli-type inequality and then establish the local boundedness result stated in Theorem \ref{thm-1-3}. The integral Harnack-type inequalities stated in Theorems \ref{thm-1-6} and \ref{thm-1-7} are proved in Section \ref{sec4}. Finally, in Section \ref{sec5}, we study the finite extinction time and derive quantitative decay estimates.
	
	\section{Preliminaries}
	\label{sec2}
	
	\subsection{Notation}
	First, we collect some notation used throughout the paper. We denote by $K_\rho(x_0)$ the cube centered at $x_0\in\mathbb{R}^N$, with side length $\rho>0$, and whose faces are parallel to the coordinate planes in $\mathbb{R}^N$. As is customary, we write the general backward parabolic cylinders as
	\begin{align*}
	Q_{R,S}(x_0,t_0):=K_R(x_0) \times\left(t_0-S,t_0\right].
	\end{align*}
	We will omit $(x_0,t_0)$ if the context is clear or $(x_0,t_0)=(0,0)$.
	
	For fixed $k\in \mathbb{R}$, define
	\begin{align*}
		(u-k)_+=\max\{u-k,0\} \quad \textmd{and} \quad  (u-k)_-=\max\{-(u-k),0\}.
	\end{align*} 
	
	Throughout the paper, $\sup$ and $\inf$ denote the essential
	supremum and infimum, respectively. 
	
	The generic positive constants are denoted by $\gamma$. Their values may change from line to line, and even within the same line. Whenever necessary, we specify the dependence of $\gamma$ on the relevant parameters. We also write $$\textit{data}:=N,p,s,q,\Lambda.$$
	
	We use the shorthand notations
	\begin{align*}
		d\mu=d\mu(x,y,\tau)=K(x,y,\tau)\,dxdy 
	\end{align*}
	and
	\begin{align*}
		U(x,y,\tau):=|u(x,\tau)-u(y,\tau)|^{p-2}(u(x,\tau)-u(y,\tau)).
	\end{align*}
	\subsection{Technical lemmas}
	
	\begin{lemma}[Lemma 2.1, \cite{DZZ21}]
		\label{lem-2-1}
      Let $ s, \theta \in (0, 1)$ and $ 1 \leq p, p_2 < p_1 \leq \infty$. Assume that
    $$s > \frac{N}{p} - \frac{N}{p_1}$$
     and
     $$\theta \left( s - \frac{N}{p} + \frac{N}{p_1} \right) + (1 - \theta) \left( \frac{N}{p_1} - \frac{N}{p_2} \right) = 0.$$
     Then there exists a constant $\gamma > 0$ depending only on $ s, p, p_1, p_2$ and $N$ such that
     \begin{align*}
     \|f\|_{L^{p_1}(B_1)} \leq \gamma \|f\|_{W^{s, p}(B_1)}^{\theta} \|f\|_{L^{p_2}(B_1)}^{1-\theta}
     \end{align*}
     for every \( f \in W^{s, p}(B_1) \cap L^{p_2}(B_1) \).
		\end{lemma}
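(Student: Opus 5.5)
This lemma is quoted from \cite{DZZ21}, so we only outline how it can be proved. The plan is to view it as a fractional Gagliardo--Nirenberg interpolation. We combine the fractional Sobolev embedding on $B_1$ with Hölder interpolation between Lebesgue norms.

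\emph{Step 1: extension.} First extend $f$ from $B_1$ to $\mathbb{R}^N$ with an extension operator $\mathcal{P}$. This operator must be bounded simultaneously $W^{s,p}(B_1)\to W^{s,p}(\mathbb{R}^N)$ and $L^{p_2}(B_1)\to L^{p_2}(\mathbb{R}^N)$. Reflection across $\partial B_1$ followed by a cutoff does this, since reflection is bi-Lipschitz and preserves both norms up to constants. This reduces the statement to functions on $\mathbb{R}^N$ supported in $B_2$.

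\emph{Step 2: the two admissible cases.} Define $p^*$ by $\frac1{p^*}=\frac1p-\frac sN$ when $sp<N$; when $sp\ge N$, take $p^*$ as large as needed. The hypothesis $s>\frac Np-\frac N{p_1}$ says exactly that $\frac1{p_1}>\frac1{p^*}$, so $p_1$ is strictly subcritical. The balance condition on $\theta$ says that $\frac1{p_1}$ is the $\theta$-convex combination of $\frac1{p_2}$ and of the Sobolev exponent $\frac1p-\frac sN$. In the subcritical case one takes an intermediate exponent $\tilde s\in(\frac Np-\frac N{p_1},s]$ and uses $W^{s,p}\subset W^{\tilde s,p}$ on bounded sets. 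Then one obtains $\|f\|_{L^{p_1}}\le\|f\|_{L^{p^*_{\tilde s}}}^{\vartheta}\|f\|_{L^{p_2}}^{1-\vartheta}$ by Hölder's inequality, followed by the embedding $W^{\tilde s,p}\subset L^{p^*_{\tilde s}}$ (Sobolev for $\tilde sp<N$, Morrey/BMO-type for $\tilde s p\ge N$). Because the prescribed $\theta$ must match exactly, the cleaner route is the following.

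\emph{Step 3: scaling argument.} Prove the non-sharp inequality $\|f\|_{L^{p_1}}\le \gamma(\|f\|_{W^{s,p}}^{\theta'}\|f\|_{L^{p_2}}^{1-\theta'}+\|f\|_{L^{p_2}})$ on $\mathbb{R}^N$ via Littlewood--Paley or Besov interpolation, $[L^{p_2},W^{s,p}]_{\theta}\hookrightarrow L^{p_1}$. The scaling relation in the hypothesis, namely that the homogeneities balance to zero, is exactly the condition making this embedding dimensionally consistent. On $B_1$ the full $W^{s,p}$ norm already contains $\|f\|_{L^p}$. This lower-order term is controlled because $\|f\|_{W^{s,p}}^{\theta}\|f\|_{L^{p_2}}^{1-\theta}$ dominates $\|f\|_{L^{\min}}$ on a bounded domain. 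That is where the inhomogeneous norm and the restriction to $B_1$ are used.

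\emph{Main obstacle.} The delicate points are the borderline exponents: $p_1=\infty$, or $sp\ge N$. In these cases the Sobolev endpoint fails, and the claim must instead come from the strict inequality $s>\frac Np-\frac N{p_1}$ via Besov embeddings $B^{s}_{p,p}\hookrightarrow B^{s-N/p+N/p_1}_{p_1,p_1}$. For the paper's purposes this lemma is simply invoked with parameters well inside the admissible range.
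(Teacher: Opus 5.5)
The paper gives no proof of this lemma; it is quoted from \cite{DZZ21}. So the question is whether your outline is itself a proof. It is not. The one nontrivial case is never carried out, and you have misplaced where the difficulty lies.

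\textbf{The case $sp<N$ is already done, and you discarded it.} Your Step 2 with $\tilde s=s$ is a complete argument. The balance identity is literally $\frac1{p_1}=\frac{\theta}{p^*}+\frac{1-\theta}{p_2}$ with $\frac1{p^*}=\frac1p-\frac sN$. Hence H\"older gives exactly the exponent $\theta$, and the fractional Sobolev inequality on the extension domain $B_1$ finishes.

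\textbf{The case $sp\ge N$ is not handled, and no Lebesgue endpoint can handle it.} H\"older between $L^{p_2}$ and $L^P$ yields $\vartheta=\frac{1/p_2-1/p_1}{1/p_2-1/P}\ge 1-\frac{p_2}{p_1}$. But $\theta<1-\frac{p_2}{p_1}$ if $sp>N$, and $\theta=1-\frac{p_2}{p_1}$ with $P=\infty$ unavailable if $sp=N$. A larger exponent cannot be repaired. The dilations $g(\lambda x)$, $\lambda\to\infty$, show that $\theta$ is the smallest admissible exponent. Since $\|f\|_{L^{p_2}(B_1)}\le\gamma\|f\|_{W^{s,p}(B_1)}$, an inequality with $\theta'>\theta$ is strictly weaker. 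So the ``non-sharp'' $\theta'$ of Step 3 is a dead end. The embedding $[L^{p_2},W^{s,p}]_\theta\hookrightarrow L^{p_1}$ is asserted, not proved. Identifying that space for $p_2\neq p$ and embedding it into $L^{p_1}$ at the exact scaling is the content of the lemma, not a reduction of it.

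Your closing remark is also mistaken. The paper uses Lemma~\ref{lem-2-1} only inside Lemma~\ref{lem-2-2}, and only when $sp\ge N$, with $p_1=p(1+\frac{hs}N)$, $p_2=h$, $\theta=\frac N{N+hs}$. The exact value of $\theta$ is what gives $\theta p_1=p$, i.e.\ the Gagliardo energy to the first power. The case you leave open is precisely the one that is needed.

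\textbf{The missing idea is a frequency splitting in which both hypotheses appear verbatim.} After your extension step, which is fine, write $f=S_Jf+\sum_{j>J}\Delta_jf$ via a Littlewood--Paley decomposition, with $f$ now denoting the extension. Young's inequality and the zero mean of the kernels of $\Delta_j$ give
\begin{align*}
\|S_Jf\|_{L^{p_1}}&\le\gamma\,2^{JN(\frac1{p_2}-\frac1{p_1})}\|f\|_{L^{p_2}},\\
\|\Delta_jf\|_{L^{p_1}}&\le\gamma\,2^{jN(\frac1p-\frac1{p_1})}\|\Delta_jf\|_{L^p}\le\gamma\,2^{-j(s-\frac Np+\frac N{p_1})}[f]_{W^{s,p}(\mathbb R^N)}.
\end{align*}
The tail is summable exactly because $s>\frac Np-\frac N{p_1}$. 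Optimizing $J$ produces the exponent determined by $\theta(s-\frac Np+\frac N{p_1})=(1-\theta)(\frac N{p_2}-\frac N{p_1})$, which is the hypothesis of the lemma. This works uniformly for $sp<N$, $sp=N$ and $sp>N$, including $p_1=\infty$ and $p_2=1$.

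For $sp>N$ there is also an elementary alternative. Combine the Campanato--Morrey bound $|f(x)-f_{B_r(x)\cap B_1}|\le\gamma r^{s-N/p}[f]_{W^{s,p}(B_1)}$ with $|f_{B_r(x)\cap B_1}|\le\gamma r^{-N/p_2}\|f\|_{L^{p_2}(B_1)}$ and optimize over $r\le 1$. Then apply H\"older between $L^{p_2}$ and $L^\infty$; the exponents multiply to exactly $\theta$. For $sp=N$ you need the splitting or a BMO argument.

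\textbf{The lower-order term.} When the optimal $J$ (or $r$) falls outside the admissible range, the resulting term is absorbed through $\|f\|_{L^{p_2}(B_1)}\le\gamma\|f\|_{W^{s,p}(B_1)}$. This follows from $W^{s,p}(B_1)\hookrightarrow L^{p_1}(B_1)\subset L^{p_2}(B_1)$. Controlling $\|f\|_{L^{\min\{p,p_2\}}}$, as you wrote, is not enough when $p_2>p$.
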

	
	In the following, we prove a parabolic Sobolev-type embedding lemma.
	\begin{lemma}
		\label{lem-2-2}
		Let $0<t_1<t_2$, $p,h>1$, $s\in(0,1)$ and $B_r\subset\mathbb{R}^N$.
		Then for every
		$$u\in L^{\infty}(t_1, t_2; L^h(B_r)) \cap L^p(t_1, t_2; W^{s,p}(B_r)),$$
	we have
	\begin{align*}
		&\int_{t_1}^{t_2}\int_{B_r}|u(x,\tau)|^{p\left(1+\frac{hs}{N}\right)} \,dxd\tau \\
		\leq& \gamma\left(r^{sp} \int_{t_1}^{t_2} \int_{B_r} \int_{B_r} \frac{|u(x,\tau)-u(y,\tau)|^p}{|x - y|^{N+sp}}\, dxdyd\tau+ \int_{t_1}^{t_2} \int_{B_r} |u(x, \tau)|^p \,dxd\tau\right)\\
		&\times\left(\operatorname*{ess \sup}_{t_1<\tau<t_2}  \mint_{B_r}|u(x,\tau)|^h\,dx\right)^{\frac{sp}{N}},
	\end{align*}
	where $\gamma>0$ only depends on $N,p,s$ and $h$. 
	\end{lemma}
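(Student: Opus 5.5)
The plan is to apply the Gagliardo--Nirenberg type inequality of Lemma \ref{lem-2-1} for a.e. fixed time $\tau$, and then integrate in time. By scaling, we may reduce to $r=1$. Indeed, if $v(x,\tau)=u(x_B+rx,\tau)$ on $B_1$, then:
\begin{itemize}
\item the Gagliardo seminorm picks up the factor $r^{sp-N}$;
\item the $L^p$ norm picks up $r^{-N}$;
\item the averaged $L^h$ norm is scale invariant.
\end{itemize}
Together these produce exactly the weight $r^{sp}$ in front of the seminorm once both sides are divided by $r^N$. The powers of $r$ balance because the exponent $p(1+hs/N)$ is chosen to make the inequality dimensionally consistent. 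So it suffices to prove the statement on $B_1$ without the factor $r^{sp}$.

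On $B_1$ we set $p_1=p(1+\frac{hs}{N})$, $p_2=h$, and take $\theta=\frac{N}{N+hs}$, so that $\theta p_1=p$ and $(1-\theta)p_1=\frac{hsp}{N}$. Next we verify the balance condition of Lemma \ref{lem-2-1}:
\begin{align*}
\theta\Big(s-\frac Np+\frac N{p_1}\Big)+(1-\theta)\Big(\frac N{p_1}-\frac Nh\Big)=\theta s-\frac{N\theta}{p}+\frac N{p_1}-(1-\theta)\frac Nh .
\end{align*}
Substituting $\theta$ gives $\frac{Ns}{N+hs}-\frac{N}{p_1}+\frac{N}{p_1}-\frac{Ns}{N+hs}=0$, as required. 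We also need the hypotheses $p,p_2<p_1$ and $s>\frac Np-\frac N{p_1}$. The first holds because $p_1>p$ and $p_1>h$ is not automatic, so this must be checked. The second follows from $\frac Np-\frac N{p_1}=\frac{N}{p}\cdot\frac{hs}{N+hs}<s$ whenever $h<p_1$, which is equivalent to $N(h-p)<phs$.

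\emph{This is the main obstacle.} When $h\ge p_1$, the exponent conditions of Lemma \ref{lem-2-1} fail. In that regime I would instead bound the integrand at $p_1\le h$ by Hölder's inequality. Alternatively, one can use the fractional Sobolev embedding $W^{s,p}\hookrightarrow L^{p^*_s}$ (or $L^{\tilde p}$ for any large $\tilde p$ if $sp\ge N$), combined with Hölder interpolation between $L^{p^*}$ and $L^h$. This route yields the same exponents, since the balance identity above is exactly the interpolation relation. A single-time inequality of the form
\begin{align*}
\|f\|_{L^{p_1}(B_1)}^{p_1}\le\gamma\,\|f\|_{W^{s,p}(B_1)}^{p}\,\|f\|_{L^h(B_1)}^{hsp/N}
\end{align*}
is then what both routes deliver. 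Here $\|f\|_{W^{s,p}}^p$ is the seminorm plus the $L^p$ norm.

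Finally, we apply this inequality with $f=u(\cdot,\tau)$ and bound $\|u(\cdot,\tau)\|_{L^h(B_1)}^{hsp/N}$ by $\big(\operatorname{ess\,sup}_\tau\mint_{B_1}|u|^h\big)^{sp/N}$ (up to $|B_1|$ factors). Integrating over $(t_1,t_2)$ then gives the claim, with $\gamma$ depending only on $N,p,s,h$.
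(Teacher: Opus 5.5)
Your argument goes through and uses the same two tools as the paper: Lemma~\ref{lem-2-1}, and the fractional Sobolev embedding combined with H\"older. The difference is that you split cases by whether $h<p_1$, whereas the paper splits by $sp<N$ versus $sp\ge N$. Your case analysis is also looser than it needs to be.

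\textbf{The missing observation.} One identity settles the case analysis:
\[
p_1-h=p-h\Big(1-\frac{sp}{N}\Big),
\]
which is $\ge p>0$ when $sp\ge N$, and equals $(1-\frac{sp}{N})(p_s^*-h)$ when $sp<N$. Hence your ``main obstacle'' $h\ge p_1$ occurs exactly when $sp<N$ and $h\ge p_s^*$.

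\textbf{Your two fallbacks in that regime.}
\begin{enumerate}
\item Bounding the $L^{p_1}$ norm by the $L^h$ norm via H\"older does not work for $h>p_s^*$. It leaves a factor $\|u(\cdot,\tau)\|_{L^h}^p$, which the $W^{s,p}$ norm does not control.
\item Sobolev into $L^{p_s^*}$ plus H\"older does work, and it is exactly the paper's argument. H\"older with exponents $\frac{N}{sp}$ and $\frac{N}{N-sp}$ gives
\begin{align*}
\int_{B_1}|u|^{p+\frac{hsp}{N}}\,dx\le\Big(\int_{B_1}|u|^h\,dx\Big)^{\frac{sp}{N}}\Big(\int_{B_1}|u|^{p_s^*}\,dx\Big)^{\frac{N-sp}{N}},
\end{align*}
and the Sobolev inequality on $B_1$ finishes.
\end{enumerate}
Your parenthetical about $L^{\tilde p}$ for $sp\ge N$ is never needed. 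That is fortunate: requiring the exponent $p$ on the $W^{s,p}$ norm forces the interpolation endpoint to be $p_s^*$. A generic $\tilde p$ would therefore not reproduce the stated exponents without an extra absorption step.

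\textbf{Comparing the two organizations.} When $sp<N$, the displayed H\"older inequality holds for every $h>1$. So the paper never needs Lemma~\ref{lem-2-1} there, and uses it only for $sp\ge N$, where $h<p_1$ is automatic and all hypotheses of the lemma hold. Your split is equally valid once the identity above is noted. It is less economical, however: it invokes the Gagliardo--Nirenberg lemma for $sp<N$, $h<p_s^*$, where the elementary H\"older--Sobolev argument already suffices.

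Your scaling reduction to $r=1$ and your verification of the balance identity with $\theta=\frac{N}{N+hs}$ are correct. The exponent $p_1$ plays no role in the scaling, since only $x$ is rescaled.
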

	\begin{proof}
	We prove the embedding inequality for $r=1$. The general case then follows by a standard scaling argument.
	
	Assume first that $sp<N$. By H\"{o}lder's inequality, we have
	\begin{align*}
	&\int_{t_1}^{t_2} \mint_{B_1} |u(x, \tau)|^{p\left(1+\frac{hs}{N}\right)}\,dxd\tau\\
	=& \int_{t_1}^{t_2} \mint_{B_1} |u(x,\tau)|^{\frac{hsp}{N}} |u(x, \tau)|^p\,dxd\tau\\
	\leq& \int_{t_1}^{t_2} \left( \mint_{B_1} |u(x,\tau)|^h dx \right)^{\frac{sp}{N}} \left( \mint_{B_1} |u(x, \tau)|^{\frac{Np}{N-sp}}\,dx \right)^{\frac{N-sp}{N}}\, d\tau\\
	\leq& \left( \operatorname*{ess\,sup}_{t_1 < \tau < t_2} \mint_{B_1} |u(x,\tau)|^h \,dx \right)^{\frac{sp}{N}} \int_{t_1}^{t_2} \left( \mint_{B_1} |u(x, \tau)|^{\frac{Np}{N-sp}}\,dx \right)^{\frac{N-sp}{N}}\, d\tau.
	\end{align*}	
	Combining the above estimate with  \cite[Theorem 6.7]{DPV12}, we arrive at the desired inequality.
		
	Next, suppose that $sp\geq N$. Set $\theta = \frac{N}{N+hs}$. Then
	\begin{align*}
		s > \frac{N}{p} - \frac{N}{p(1 + \frac{hs}{N})}, \quad p\left(1 + \frac{hs}{N}\right) > h
\end{align*}
and
\begin{align*}
	\theta \left( s - \frac{N}{p} + \frac{N}{p(1 + \frac{hs}{N})} \right) + (1 - \theta) \left( \frac{N}{p(1 + \frac{hs}{N})} - \frac{N}{h} \right) = 0.
	\end{align*}
Consequently, we may apply Lemma \ref{lem-2-1} to obtain
	\begin{align}
		\label{2.1}
		\mint_{B_1} |u(x,\tau)|^{p(1+\frac{hs}{N})}\,dx\leq &\gamma \left( \int_{B_1} \mint_{B_1} \frac{|u(x,\tau)-u(y,\tau)|^p}{|x-y|^{N+sp}} \,dxdy + \mint_{B_1} |u(x,\tau)|^p\, dx \right) \nonumber\\
		&\times \left( \mint_{B_1} |u(x,\tau)|^h\,dx \right)^{\frac{sp}{N}}
		\end{align}
		for a.e. $\tau\in(t_1,t_2)$. Integrating \eqref{2.1} with respect to the time variable over $(t_1,t_2)$ completes the proof.
	\end{proof}
 We next recall an elementary algebraic inequality, see Lemma 2.2 in \cite{AF89} for $0<\alpha<1$ and inequality (2.4) in \cite{GM86} for $\alpha>1$.

	\begin{lemma}
		\label{lem-2-3}
		For every $\alpha>0$, there is a constant $\gamma$ depending only on $\alpha$ such that
		\begin{align*}
			\frac{1}{\gamma}\left|| b|^{\alpha-1} b-|a|^{\alpha-1} a\right|\leq(|a|+|b|)^{\alpha-1}
			|b-a|\leq \gamma\left||b|^{\alpha-1} b-|a|^{\alpha-1} a \right|
	\end{align*}
		for all $a,b\in\mathbb{R}$.
	\end{lemma}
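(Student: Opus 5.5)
This is an elementary real-variable inequality, and I would prove it by scaling to a one-variable problem followed by a compactness argument. Set $\phi(t)=|t|^{\alpha-1}t$. Both quantities
\begin{align*}
A(a,b):=\bigl||b|^{\alpha-1}b-|a|^{\alpha-1}a\bigr|,\qquad B(a,b):=(|a|+|b|)^{\alpha-1}|b-a|
\end{align*}
are symmetric in $(a,b)$, invariant under $(a,b)\mapsto(-a,-b)$, and positively homogeneous of degree $\alpha$: for $\lambda>0$ we have $A(\lambda a,\lambda b)=\lambda^\alpha A(a,b)$, and similarly for $B$. If $a=b$ both sides vanish, so assume $a\neq b$. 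By symmetry we may then take $|b|\ge|a|$, and after flipping signs, $b>0$. Scaling by $\lambda=1/b$ reduces to $b=1$ and $a=t\in[-1,1)$. It therefore suffices to show that
\begin{align*}
g(t):=\frac{1-\phi(t)}{(1+|t|)^{\alpha-1}(1-t)},\qquad t\in[-1,1),
\end{align*}
is bounded above and below by positive constants depending only on $\alpha$.

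First I would check that $g$ is continuous and positive on $[-1,1)$. The numerator $1-\phi(t)$ is positive because $\phi$ is strictly increasing with $\phi(1)=1$. The denominator is positive and continuous, since $1+|t|\in[1,2]$, so the factor $(1+|t|)^{\alpha-1}$ lies between $\min(1,2^{\alpha-1})$ and $\max(1,2^{\alpha-1})$ regardless of the sign of $\alpha-1$. That factor is therefore harmless.

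The main step is the behaviour at the endpoint $t\to1^-$, which is the only place where compactness fails. There $(1-\phi(t))/(1-t)\to\phi'(1)=\alpha>0$ by differentiability of $\phi$ at $1$, while $(1+|t|)^{\alpha-1}\to2^{\alpha-1}$. Hence $g$ extends continuously to $[-1,1]$ with the positive value $\alpha 2^{1-\alpha}$ at $t=1$. A positive continuous function on a compact interval has a positive minimum and a finite maximum, so taking $\gamma=\max\{\max g,\,1/\min g\}$ gives both inequalities. The constant depends only on $\alpha$.

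Two details need care, and I would flag them explicitly. At $t=0$ with $0<\alpha<1$, $\phi$ fails to be differentiable, but only continuity of $g$ is needed there, so this is not a problem. The endpoint $t=-1$ is included in the compact interval, where $g(-1)=2/(2^{\alpha-1}\cdot2)=2^{1-\alpha}$, which is finite and positive. No case split between $\alpha<1$ and $\alpha>1$ is required.
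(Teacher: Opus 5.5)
Your proof is correct, but it follows a different route from the paper. The paper gives no argument for Lemma 2.3; it cites one reference for $0<\alpha<1$ and another for $\alpha>1$.

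\emph{What the citation route gives.} Direct proofs of this inequality typically write $\phi(b)-\phi(a)=\alpha(b-a)\int_0^1|a+s(b-a)|^{\alpha-1}\,ds$ and compare the integral with $(|a|+|b|)^{\alpha-1}$. The hard direction swaps between regimes. For $\alpha<1$ the integrand can be singular where $a+s(b-a)$ vanishes, so the upper bound is the real work. For $\alpha>1$ the lower bound is. This is why two sources are needed, but such arguments give explicit constants and extend to vector arguments.

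\emph{What your route gives.} You use symmetry, oddness and $\alpha$-homogeneity to reduce to the single function $g$ on $[-1,1)$. The only non-compact endpoint, $t\to1^-$, is handled by $\phi'(1)=\alpha$, after which compactness finishes the argument. This treats every $\alpha>0$ uniformly, including $\alpha=1$, which the paper's split leaves implicit, and it is fully self-contained. What it gives up is an explicit value of $\gamma(\alpha)$, which the paper never needs.

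One detail should be stated explicitly. When $a=b=0$ and $\alpha<1$, the middle term is formally $0^{\alpha-1}\cdot 0$. You need the convention that it equals $0$ there; with that convention your case $a=b$ is complete.
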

	To work with the term involving the time derivative, we shall use auxiliary functions $\bm{h}_\pm$ defined as
	\begin{align}
		\label{2.2}
		\bm{h}_\pm(w,k):=\pm q \int_k^w|s|^{q-1}(s-k)_{\pm}\,ds,
	\end{align}
	for $k,w\in\mathbb{R}$ and $q>0$. It follows directly from the definition that $\bm{h}_\pm(w,k)\geq 0$. We also write
	\begin{align*}
		\bm{h}(w,k):= q \int_k^w|s|^{q-1}(s-k)\,ds.
	\end{align*}
	
	The next lemma can be deduced with the help of Lemma \ref{lem-2-3}.
	
	\begin{lemma} [Lemma 2.2, \cite{BDL21}]
		\label{lem-2-4}
		Let $q>0$. There exists a constant $\gamma=\gamma(q)>0$ such that, for all $a,b\in\mathbb{R}$, 
		\begin{align*}
			\frac{1}{\gamma}(|a|+|b|)^{q-1}|a-b|^2 \leq \bm{h}(a,b) \leq \gamma(|a|+|b|)^{q-1}|a-b|^2
		\end{align*}
		and
		\begin{align*}
			\frac{1}{\gamma}(|a|+|b|)^{q-1}(a-b)_{ \pm}^2 \leq \bm{h}_{ \pm}(a, b) \leq \gamma(|a|+|b|)^{q-1}(a-b)_{ \pm}^2.
		\end{align*}
	\end{lemma}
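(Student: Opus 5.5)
The plan is to derive both inequalities from the identity
\[
\bm{h}(a,b)=q\int_b^a|s|^{q-1}(s-b)\,ds,
\]
after splitting into sign configurations and reducing to one-variable homogeneity. By symmetry under $(a,b)\mapsto(-a,-b)$ (the integrand changes consistently, since $|s|^{q-1}$ is even), and since the case $a=b$ is trivial, I may assume $a\neq b$. The integrand $|s|^{q-1}(s-b)$ has the same sign as $a-b$ on the interval between $b$ and $a$, so $\bm{h}(a,b)\ge0$.

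For the first chain, I would integrate by parts or, more simply, use the substitution $s=b+\sigma(a-b)$, $\sigma\in[0,1]$. This gives
\[
\bm{h}(a,b)=q(a-b)^2\int_0^1|b+\sigma(a-b)|^{q-1}\sigma\,d\sigma .
\]
It therefore suffices to show that
\[
I(a,b):=\int_0^1|b+\sigma(a-b)|^{q-1}\sigma\,d\sigma
\]
is comparable to $(|a|+|b|)^{q-1}$, with constants depending only on $q$.

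The expression is homogeneous of degree $q-1$, so I normalize $|a|+|b|=1$; then $(a,b)$ ranges over a compact set (a rhombus). The upper bound is clear when $q\ge1$, since $|b+\sigma(a-b)|\le1$. When $q<1$ the integrand may blow up where the segment crosses $0$. That singularity is $|{\cdot}|^{q-1}$, hence integrable, and the bound I need is
\[
\int_0^1|c+\sigma d|^{q-1}\,d\sigma\le \gamma\,|d|^{q-1}\quad\text{when } |d|\gtrsim \text{const},
\]
where $d=a-b$. When $|d|$ is small, $a$ and $b$ have the same sign and $|b+\sigma d|\approx 1/2$, so that case is also fine.

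For the lower bound I would restrict to $\sigma\in[1/2,1]$. Near $\sigma=1$ the point is close to $a$; when $q<1$ the integrand is at least $1$ anyway since $|{\cdot}|\le1$. When $q>1$ I need $|b+\sigma(a-b)|$ bounded below on a subinterval of fixed length; this holds because the segment meets zero at most once, so some quarter of $[0,1]$ keeps distance $\gtrsim1$ from zero and the weight $\sigma$ is harmless there. The cleanest justification is a continuity/compactness argument: $I$ is continuous and positive on the normalized compact set (for $q<1$ continuity follows by dominated convergence), so it is bounded above and below there.

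For $\bm{h}_\pm$, observe that $(s-k)_+$ vanishes unless $s>k$. If $a\le b$, then $\bm{h}_+(a,b)=q\int_b^a|s|^{q-1}(s-b)_+\,ds=0=(a-b)_+^2$. If $a>b$, then $\bm{h}_+(a,b)=\bm{h}(a,b)$ and $(a-b)_+^2=(a-b)^2$, so the first part applies. The $\bm{h}_-$ case is symmetric. The main obstacle is only the uniform lower bound for $q>1$ when the segment passes through the origin, and the compactness argument handles it.
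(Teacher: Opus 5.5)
Your proof is correct, but it takes a different route from the one the paper indicates.

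The paper gives no proof of its own. It cites \cite{BDL21} and says the lemma follows from Lemma~\ref{lem-2-3}. In that argument one integrates by parts, using $\frac{d}{ds}\bigl(|s|^{q-1}s\bigr)=q|s|^{q-1}$, to obtain
\[
\bm h(a,b)=\int_b^a\bigl(|a|^{q-1}a-|s|^{q-1}s\bigr)\,ds .
\]
One then applies Lemma~\ref{lem-2-3} with $\alpha=q$, so the integrand (which has the sign of $a-s$) is comparable to $(|a|+|s|)^{q-1}|a-s|$. An elementary case analysis on a suitable half of the segment between $b$ and $a$ then compares $(|a|+|s|)^{q-1}$ with $(|a|+|b|)^{q-1}$.

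You instead substitute $s=b+\sigma(a-b)$, use that $I(a,b)$ is homogeneous of degree $q-1$, and conclude by compactness of $\{|a|+|b|=1\}$. What each route buys:
\begin{itemize}
\item Yours is self-contained and avoids the sign bookkeeping. The price is a non-explicit $\gamma(q)$ and the need for continuity, or at least two-sided boundedness, of $I$ on the normalized set.
\item The paper's reuses an inequality it needs elsewhere anyway and yields explicit constants.
\end{itemize}
Your reduction of $\bm h_\pm$ to $\bm h$ via the sign of $a-b$ is exactly right.

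One small point concerns your justification of continuity for $0<q<1$. In the $\sigma$-variable the singular point $\sigma=-b/(a-b)$ moves with $(a,b)$. So there is no obvious single integrable dominating function there, and ``by dominated convergence'' needs a word more. You can fix this in either of two ways.
\begin{itemize}
\item Work in the original variable for $a\neq b$, where
\[
I(a,b)=(a-b)^{-2}\int_b^a|s|^{q-1}(s-b)\,ds .
\]
The singularity is then fixed at $s=0$ and $2|s|^{q-1}\chi_{[-1,1]}(s)$ dominates. The points $a=b=\pm\frac12$, where the integrand stays bounded, are treated separately.
\item Skip continuity entirely for $q<1$. The direct estimates you already sketched give $I\ge\int_0^1\sigma\,d\sigma=\frac12$ and $I\le\gamma(q)$ without any compactness.
\end{itemize}
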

	
	We give a fast geometric convergence lemma from \cite[Lemma 4.1]{D93}.
	
	\begin{lemma} 
		\label{lem-2-5}
		Let $\{Y_j\}_{j=0}^\infty$ be a sequence of positive numbers satisfying
		\begin{align*}
			Y_{j+1}\leq Kb^jY_j^{1+\delta},\quad j=0,1,2, \ldots
		\end{align*}
		for some constants $K>0$, $b>1$ and $\delta>0$. If
		\begin{align*}
			Y_0\leq K^{-\frac{1}{\delta}}b^{-\frac{1}{\delta^2}},
		\end{align*}
		then we have $Y_j\rightarrow 0$ as $j\rightarrow \infty$.
	\end{lemma}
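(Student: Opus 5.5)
The plan is to run the classical De Giorgi scheme of \cite[Lemma 4.1]{D93}, in the same spirit in which it is used throughout the fast-diffusion literature. Let $\{Y_j\}$ be positive with $Y_{j+1}\le K b^j Y_j^{1+\delta}$ and $Y_0\le K^{-1/\delta}b^{-1/\delta^2}$. The idea is to prove by induction the explicit bound
\[
Y_j\le Y_0\, b^{-j/\delta}\qquad (j\ge 0),
\]
from which $Y_j\to0$ follows at once because $b>1$ and $\delta>0$.

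The base case $j=0$ is trivial. For the inductive step, assume $Y_j\le Y_0 b^{-j/\delta}$. By the recursion,
\[
Y_{j+1}\le K b^j Y_0^{1+\delta} b^{-j(1+\delta)/\delta}=K Y_0^{\delta}\, b^{-j/\delta}\, Y_0 .
\]
It therefore suffices to check that $K Y_0^\delta\le b^{-1/\delta}$. Raising the smallness hypothesis to the power $\delta$ gives
\[
Y_0^\delta\le K^{-1}b^{-1/\delta},
\]
which is exactly that inequality, and it yields $Y_{j+1}\le Y_0 b^{-(j+1)/\delta}$. This closes the induction.

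There is no genuine obstacle here. The only point needing care is the bookkeeping of exponents: $j-j(1+\delta)/\delta=-j/\delta$, and the hypothesis on $Y_0$ is precisely calibrated so that the leftover factor equals $b^{-1/\delta}$. The resulting geometric decay rate $b^{-1/\delta}$ is what we will then feed into the iterations for Theorems \ref{thm-1-3} and \ref{thm-1-6}.
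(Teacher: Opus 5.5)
Your induction $Y_j\le Y_0\,b^{-j/\delta}$ is correct: the exponent bookkeeping and the use of the smallness of $Y_0$ in the form $KY_0^{\delta}\le b^{-1/\delta}$ both check out, nothing requires $K>1$, and this is exactly the standard proof of \cite[Lemma 4.1]{D93}, which the paper cites rather than reproves. One small aside: in the paper this lemma drives the De Giorgi iterations in Lemma~\ref{lb-qualitative} and Theorem~\ref{thm-1-3}, but not Theorem~\ref{thm-1-6}, which uses a different absorption iteration of the form $S_j\le\varepsilon S_{j+1}+\dots$.
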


	The following iteration lemma is displayed in \cite[Lemma 6.1]{G03}.
	
	\begin{lemma}
		\label{lem-2-6}
		Let $A, B, C\geq 0$, $\alpha>\beta\geq0$ and $\theta\in (0, 1)$. Suppose that $f: [r, \rho] \rightarrow [0, \infty)$ is a bounded function that satisfies 
		\begin{align*}
		f(R_1) \leq \theta f(R_2)+\frac{A}{(R_2-R_1)^\alpha}+\frac{B}{(R_2-R_1)^\beta}+C 
		\end{align*}
		for all $r\leq R_1<R_2\leq\rho$. Then we have
		\begin{align*}
		f(r)\leq \gamma(\alpha, \theta)\left[\frac{A}{(\rho-r)^\alpha}+\frac{B}{(\rho-r)^\beta}+C\right].
		\end{align*}
	\end{lemma}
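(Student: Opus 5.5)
The statement to prove is Lemma~\ref{lem-2-6}, the classical absorption lemma. I will argue by a geometric sequence of radii, iterating the assumed inequality and summing. Fix $\tau\in(0,1)$, to be chosen later depending only on $\theta$ and $\alpha$. Define $R_0=r$ and $R_{i+1}=R_i+(1-\tau)\tau^i(\rho-r)$. Then $R_i$ increases to $\rho$ and $R_{i+1}-R_i=(1-\tau)\tau^i(\rho-r)$.

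Applying the hypothesis with $R_1=R_i$ and $R_2=R_{i+1}$ gives
\begin{align*}
f(R_i)\le \theta f(R_{i+1})+\frac{A}{(1-\tau)^\alpha\tau^{i\alpha}(\rho-r)^\alpha}+\frac{B}{(1-\tau)^\beta\tau^{i\beta}(\rho-r)^\beta}+C .
\end{align*}
Iterating this from $i=0$ to $k-1$ yields
\begin{align*}
f(r)\le \theta^k f(R_k)+\sum_{i=0}^{k-1}\theta^i\Big[\frac{A\,\tau^{-i\alpha}}{(1-\tau)^\alpha(\rho-r)^\alpha}+\frac{B\,\tau^{-i\beta}}{(1-\tau)^\beta(\rho-r)^\beta}+C\Big].
\end{align*}

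The only delicate point is choosing $\tau$ so that the series converge. Take $\tau$ with $\theta\tau^{-\alpha}<1$, for instance $\tau^\alpha=(1+\theta)/2$, which lies in $(0,1)$ since $\theta<1$. Because $\beta\le\alpha$ and $\tau<1$, we have $\tau^{-\beta}\le\tau^{-\alpha}$, so $\theta\tau^{-\beta}<1$ as well. The geometric series $\sum\theta^i\tau^{-i\alpha}$, $\sum\theta^i\tau^{-i\beta}$ and $\sum\theta^i$ are then all bounded by $(1-\theta\tau^{-\alpha})^{-1}$, which depends only on $\theta$ and $\alpha$.

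Since $f$ is bounded on $[r,\rho]$, the term $\theta^k f(R_k)\le\theta^k\sup f$ tends to $0$ as $k\to\infty$; this is exactly where the boundedness assumption is used. Letting $k\to\infty$ gives
\begin{align*}
f(r)\le \gamma(\alpha,\theta)\Big[\frac{A}{(\rho-r)^\alpha}+\frac{B}{(\rho-r)^\beta}+C\Big],
\end{align*}
with $\gamma=(1-\tau)^{-\alpha}(1-\theta\tau^{-\alpha})^{-1}$. Here we used $(1-\tau)^{-\beta}\le(1-\tau)^{-\alpha}$, valid since $0<1-\tau<1$ and $\beta\le\alpha$.
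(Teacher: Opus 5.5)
Your proof is correct. It is essentially the standard geometric-radii iteration from the cited source \cite[Lemma 6.1]{G03}, which the paper invokes without proof. The cited argument uses the same radii $R_{i+1}=R_i+(1-\tau)\tau^i(\rho-r)$ and the same choice of $\tau$ with $\theta\tau^{-\alpha}<1$, and it uses the boundedness of $f$ in the same way to kill $\theta^k f(R_k)$.
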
 
The following elementary inequalities are key ingredients in handling the nonlocal terms in the proof of the integral Harnack inequality.
\begin{lemma}[Lemma 2.7, \cite{AABP18}]
	\label{lem-2-7}
	Let $p> 1$, $\alpha>0$ and $a,b\geq 0$. There exists a constant $\gamma(p,\alpha)>0$ such that
	\begin{align*}
		\left| a^{\frac{p+\alpha-1}{p}} - b^{\frac{p+\alpha-1}{p}} \right|^p \leq \gamma |a - b|^{p-2} (a - b)(a^\alpha - b^\alpha).
		\end{align*}
In addition, if $\alpha\geq 1$, then there exists a constant $\gamma(p,\alpha)>0$ such that
\begin{align*}
	(a + b)^{\alpha-1} |a - b|^p \leq \gamma \left| a^{\frac{p+\alpha-1}{p}} - b^{\frac{p+\alpha-1}{p}} \right|^p.
	\end{align*}
\end{lemma}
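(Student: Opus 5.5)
Both inequalities are symmetric in $(a,b)$ and homogeneous, so I would reduce them to one-variable inequalities on $[0,1)$ and then check boundedness of a continuous ratio. Set $\beta:=\frac{p+\alpha-1}{p}$ and note $\beta>\frac{p-1}{p}>0$; when $\alpha\geq1$ we also have $\beta\geq1$.

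\emph{Reduction.} If $a=b$ both sides vanish, so assume $a\neq b$. Swapping $a$ and $b$ leaves each side unchanged: the products $(a-b)|a-b|^{p-2}$ and $a^\alpha-b^\alpha$ both change sign, so their product does not. Hence we may take $a>b\geq0$. Every term is positively homogeneous of degree $p+\alpha-1=p\beta$ under $(a,b)\mapsto(\lambda a,\lambda b)$. So dividing by $a^{p\beta}$ and writing $t:=b/a\in[0,1)$, the two claims become
\begin{align*}
(1-t^\beta)^p\leq\gamma(1-t)^{p-1}(1-t^\alpha),\qquad (1+t)^{\alpha-1}(1-t)^p\leq\gamma(1-t^\beta)^p .
\end{align*}

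\emph{First inequality.} On $[0,1)$ the ratio
$$g(t):=\frac{(1-t^\beta)^p}{(1-t)^{p-1}(1-t^\alpha)}$$
is continuous and positive, and $g(0)=1$. The only issue is the endpoint $t\to1^-$. There, by the mean value theorem (or l'H\^opital), $1-t^\beta\sim\beta(1-t)$ and $1-t^\alpha\sim\alpha(1-t)$, so $g(t)\to\beta^p/\alpha<\infty$. Thus $g$ extends continuously to the compact interval $[0,1]$ and is bounded by some $\gamma(p,\alpha)$. The only step needing care is this limit, and it is routine because $\alpha,\beta>0$.

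\emph{Second inequality} ($\alpha\geq1$). Here $\beta\geq1$, so $t^\beta\leq t$ for $t\in[0,1)$, which gives $1-t^\beta\geq1-t$. Also $(1+t)^{\alpha-1}\leq2^{\alpha-1}$ since $\alpha-1\geq0$. Combining,
$$(1+t)^{\alpha-1}(1-t)^p\leq2^{\alpha-1}(1-t^\beta)^p,$$
which is the claim with $\gamma=2^{\alpha-1}$. Undoing the normalization proves the lemma.
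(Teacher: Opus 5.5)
Your proof is correct and complete. The symmetry and homogeneity reduction to $t=b/a\in[0,1)$ is valid. The ratio $g$ is continuous on $[0,1)$ with $g(0)=1$ and $g(t)\to\beta^p/\alpha$ as $t\to1^-$. For $\alpha\geq1$, the bounds $1-t^\beta\geq1-t$ and $(1+t)^{\alpha-1}\leq2^{\alpha-1}$ give the second inequality with $\gamma=2^{\alpha-1}$.

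The paper does not prove this lemma; it only quotes it from \cite{AABP18}. So there is no route to compare against, and your argument is the standard self-contained one.

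If you want an explicit constant in the first inequality instead of a compactness bound, use $\beta-1=\frac{\alpha-1}{p}$. For $a>b\geq0$, H\"older's inequality on $(b,a)$ gives
\[
(a^\beta-b^\beta)^p=\beta^p\Bigl(\int_b^a s^{\frac{\alpha-1}{p}}\,ds\Bigr)^p\leq\beta^p(a-b)^{p-1}\int_b^a s^{\alpha-1}\,ds=\frac{\beta^p}{\alpha}(a-b)^{p-1}(a^\alpha-b^\alpha).
\]
Both integrals converge at $s=0$ because $\alpha,\beta>0$. This shows that $\gamma=\beta^p/\alpha$ works. Since it coincides with your limit of $g$ at $t=1$, it is in fact the best constant.
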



	\subsection{Time mollification}
	
	Since weak solutions do not generally possess a sufficiently regular time derivative, we employ exponential mollification in time, following \cite{KL06}. For any $v\in L^1(E_T)$ and $h>0$, define
	\begin{align*}
		[\![v]\!]_h(x,t):=\frac{1}{h} \int_0^t e^{\frac{s-t}{h}} v(x,s)\,ds
	\end{align*}
	and
	\begin{align*}
		[\![v]\!]_{\bar{h}}(x, t):=\frac{1}{h} \int_t^T e^{\frac{t-s}{h}} v(x, s)\,ds.
	\end{align*}	
	
	The following fundamental properties of mollified functions are given in \cite [Appendix]{BDM13}.
	
	\begin{lemma}
		\label{lem-2-8}
		Let $r\geq 1$. Then the following conclusions hold. 
		
		\begin{itemize}
			\item [(i)] If $v \in L^r(E_T)$, then $[\![v]\!]_h \in L^r (E_T)$ and $\|[\![v]\!]_h\|_{L^r(E_T)}\leq\|v\|_{L^r(E_T)}$. In addition, we have
			$[\![v]\!]_h \rightarrow v$ strongly in $L^r(E_T)$ and almost everywhere in $E_T$ as $h \rightarrow 0$. This statement also holds for $[\![v]\!]_{\bar h}$.
			
			\item [(ii)] The following identities hold almost everywhere in $E_T$,
			\begin{align*}
				\partial_t[\![v]\!]_h=\frac{1}{h}\left(v-[\![v]\!]_h\right),\quad \partial_t[\![v]\!]_{\bar{h}}=\frac{1}{h}\left([\![v]\!]_{\bar{h}}-v\right).
			\end{align*}
			
			\item [(iii)] If $v\in L^r(E_T)$, then $[\![v]\!]_h$ and $[\![v]\!]_{\bar{h}}$ belong to $C\left([0, T] ; L^r(E)\right)$.
			
			
	\end{itemize}
	
	\end{lemma}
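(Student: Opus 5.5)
The plan is to argue directly from the explicit convolution structure of the mollifiers. We write $[\![v]\!]_h(\cdot,t)=\int_0^t \kappa_h(t-s)\,v(\cdot,s)\,ds$ with the one-sided kernel $\kappa_h(\sigma)=h^{-1}e^{-\sigma/h}\chi_{\{\sigma\ge0\}}$, and note that $\int_0^\infty\kappa_h=1$. The backward mollification $[\![v]\!]_{\bar h}$ is obtained from $[\![v]\!]_h$ by the time reversal $t\mapsto T-t$, applied to $v(\cdot,T-\cdot)$. Hence every claim for $[\![v]\!]_{\bar h}$ follows from the one for $[\![v]\!]_h$, and we only treat the latter.

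For (i) we proceed in three steps.
\begin{itemize}
\item \emph{$L^r$ bound.} Extend $v$ by zero outside $(0,T)$. By Minkowski's integral inequality, or equivalently by Young's convolution inequality in $t$ for a.e. fixed $x$, we get $\|[\![v]\!]_h(x,\cdot)\|_{L^r(0,T)}\le\|\kappa_h\|_{L^1}\|v(x,\cdot)\|_{L^r(0,T)}\le\|v(x,\cdot)\|_{L^r(0,T)}$. Integrating in $x$ gives the contraction estimate. Measurability of $[\![v]\!]_h$ follows from Fubini.
\item \emph{Strong convergence.} This follows from the uniform bound by a density argument. For $w\in C_c(E_T)$ we have
\begin{align*}
[\![w]\!]_h(x,t)-w(x,t)=\int_0^{t/h}e^{-\sigma}\bigl(w(x,t-h\sigma)-w(x,t)\bigr)\,d\sigma-e^{-t/h}w(x,t).
\end{align*}
The first term tends to zero uniformly by uniform continuity of $w$. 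The second is bounded by $\|w\|_\infty e^{-t/h}$, whose $L^r$ norm is $O(h^{1/r})$. For general $v$ we then use $\|[\![v]\!]_h-v\|_r\le 2\|v-w\|_r+\|[\![w]\!]_h-w\|_r$.
\item \emph{A.e. convergence.} This is the step we expect to need the most care. The first approach is to dominate $|[\![v]\!]_h|$ by the one-sided Hardy--Littlewood maximal function in $t$. The kernel $\kappa_h$ is radially decreasing in $|\sigma|$ and integrable, so $\sup_h|[\![v]\!]_h(x,t)|\le M_t v(x,t)$. The usual argument then gives convergence at every Lebesgue point of $v(x,\cdot)$, hence for a.e. $t$ and a.e. $x$ by Fubini. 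If this maximal-function route becomes cumbersome, the fallback is to extract an a.e.-convergent subsequence $h_j\to0$ from the $L^r$ convergence; that subsequence version is all that is used later.
\end{itemize}

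For (ii) we differentiate the formula in $t$. Writing $[\![v]\!]_h=h^{-1}e^{-t/h}\int_0^t e^{s/h}v(x,s)\,ds$, the product rule combined with the fundamental theorem of calculus for the absolutely continuous function $t\mapsto\int_0^te^{s/h}v(x,s)\,ds$ gives the derivative a.e. For a.e. $x$ it is $-h^{-1}[\![v]\!]_h+h^{-1}v$. The same computation, with the opposite sign coming from the lower limit, gives the identity for $[\![v]\!]_{\bar h}$. In the distributional sense the identity holds in $L^r(E_T)$.

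For (iii) we regard $v$ as an element of $L^r(0,T;L^r(E))$ and the mollification as a Bochner integral. For $t'<t$ we split
\begin{align*}
[\![v]\!]_h(t)-[\![v]\!]_h(t')=\bigl(e^{-(t-t')/h}-1\bigr)[\![v]\!]_h(t')+\frac1h\int_{t'}^{t}e^{(s-t)/h}v(s)\,ds.
\end{align*}
By H\"older's inequality, $\|[\![v]\!]_h(t')\|_{L^r(E)}\le h^{-1}\int_0^T\|v(s)\|_{L^r(E)}\,ds$, which is finite and uniform in $t'$. The first term is therefore $O(|t-t'|)$. The second is at most $h^{-1}\int_{t'}^t\|v(s)\|_{L^r(E)}\,ds$, which tends to zero by absolute continuity of the integral. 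This shows $[\![v]\!]_h\in C([0,T];L^r(E))$, with $[\![v]\!]_h(0)=0$.
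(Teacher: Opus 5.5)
Your proposal is correct and is the standard argument for these properties. The paper gives no proof of its own and simply refers to \cite[Appendix]{BDM13}, so there is no different route to compare against.

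Keep the Lebesgue-point argument for a.e.\ convergence as you wrote it. The statement claims convergence for the whole family $h\to0$, and the subsequence fallback would prove strictly less, although that would still suffice for the later applications. Phrase the majorant precisely: $\kappa_h$ is not radial but vanishes on $(-\infty,0)$ and is decreasing on $[0,\infty)$, which is exactly what the bound $\sup_{h>0}|[\![v]\!]_h|\le M^-_t v$ by the one-sided maximal function needs.
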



	\section{Energy estimate and local boundedness}
	\label{sec3}
	\subsection{Energy estimate} We first present a Caccioppoli inequality with a level that depends on time, as established in \cite[Lemma 2.4]{SSY26}.
	
	\begin{lemma}
		\label{lem-energy-moving-level}
		Let $p>1$, $q>0$, and $s\in(0,1)$.
		Let $u$ be a weak subsolution (super-) to \eqref{1.1} such that
		$u\geq0$ in $Q_{\rho,\theta}:=K_\rho(x_0)\times(t_0-\theta,t_0]\Subset E_T.$ Let $k:[t_0-\theta,t_0]\to(0,\infty)$ be an absolutely continuous function.
		Let $\psi$ be a Lipschitz cutoff function satisfying
		$0\leq\psi\leq1$, with spatial support contained in a fixed
		compact subset of $K_\rho(x_0)$ for any $\tau\in(t_0-\theta,t_0]$. Set
		$w_{\pm}(x,\tau):=(u(x,\tau)-k(\tau))_{\pm}$. Then there exists a constant $\gamma>0$ depending only on
		$N,p,s,q,\Lambda$ such that
		\begin{align}
			\label{3.1}
			&\int_{K_\rho(x_0)}\bm h_\pm(u(x,t),k(t))\psi^p(x,t)\,dx\nonumber\\
			&+\frac1\gamma\int_{t_0-\theta}^{t}\int_{K_\rho(x_0)}\int_{K_\rho(x_0)}
			|w_\pm(x,\tau)-w_\pm(y,\tau)|^p\min\{\psi(x,\tau),\psi(y,\tau)\}^p\,d\mu d\tau\nonumber\\
			\leq&\int_{K_\rho(x_0)}\bm h_\pm(u(x,t_0-\theta),k(t_0-\theta))\psi^p(x,t_0-\theta)\,dx
			\nonumber\\
			&\mp\int_{t_0-\theta}^{t}k'(\tau)\int_{K_\rho(x_0)}(u^q(x,\tau)-k^q(\tau))_\pm
			\psi^p(x,\tau)\,dxd\tau\nonumber\\
			&+\int_{t_0-\theta}^{t}\int_{K_\rho(x_0)}\bm h_\pm(u(x,\tau),k(\tau))
			\left|\partial_\tau\psi^p(x,\tau)\right|\,dxd\tau\nonumber\\
			&+\gamma\int_{t_0-\theta}^{t}\int_{K_\rho(x_0)}\int_{K_\rho(x_0)}
			\max\{w_\pm(x,\tau),w_\pm(y,\tau)\}^p|\psi(x,\tau)-\psi(y,\tau)|^p\,d\mu d\tau\nonumber\\
			&+\gamma
			\int_{t_0-\theta}^{t}\int_{K_\rho(x_0)}w_\pm(x,\tau)\psi^p(x,\tau)
			\int_{\mathbb R^N\setminus K_\rho(x_0)}\frac{w_\pm^{p-1}(y,\tau)}{|x-y|^{N+sp}}\,dydxd\tau,
		\end{align}
		where the function $\bm{h}_{\pm}$ is defined as in \eqref{2.2}.
	\end{lemma}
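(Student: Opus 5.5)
The plan is to test the weak formulation \eqref{1.5} with $\varphi=\pm w_\pm(x,\tau)\psi^p(x,\tau)$, where $w_\pm=(u-k(\tau))_\pm$. Since $u$ lacks a time derivative, we first work with the exponential mollification $[\![u]\!]_h$ from Lemma \ref{lem-2-8} (or $[\![\cdot]\!]_{\bar h}$ in the parabolic term) and pass to the limit $h\to0$ at the end. We restrict $\varphi$ to $(t_0-\theta,t)$, cutting off in time by a Lipschitz function that approximates $\chi_{(t_0-\theta,t)}$. The test function is admissible because $\psi$ has spatial support compactly inside $K_\rho(x_0)$, and $w_\pm\psi^p$ lies in $L^p(W^{s,p}_0)$ and in $W^{1,q+1}(L^{q+1})$ after mollification.

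\textbf{Parabolic term.} After mollification, the term $\partial_\tau(u^q)\,(\pm w_\pm)\psi^p$ has to be written as a time derivative of $\bm h_\pm(u,k(\tau))\psi^p$ plus error terms. We use
\[
\partial_\tau \bm h_\pm(u,k(\tau))=\pm q u^{q-1}(u-k)_\pm\partial_\tau u \mp k'(\tau)\,q\int_k^u|s|^{q-1}\,\mathrm{sgn}_\pm\,ds ,
\]
where $\partial_k \bm h_\pm(u,k)=\mp q\int_k^u |s|^{q-1}\chi_{\{\pm(s-k)>0\}}\,ds=\mp (u^q-k^q)_\pm$, using $u\ge0$ and $k>0$. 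This gives
\[
\pm\partial_\tau(u^q)\,w_\pm=\partial_\tau \bm h_\pm(u,k)\pm k'(\tau)(u^q-k^q)_\pm .
\]
Multiplying by $\psi^p$ and integrating by parts in time produces the boundary terms at $t$ and at $t_0-\theta$, the $k'$ term, and the term $\int\bm h_\pm|\partial_\tau\psi^p|$. Making this rigorous is the main obstacle. Because the level $k(\tau)$ moves, the standard mollification argument has to be adapted, and I would handle it as follows.
\begin{itemize}
\item Use the convexity-type inequality $(\partial_t[\![u^q]\!]_h)\,(\pm([\![u^q]\!]_h^{1/q}-k)_\pm)\ge\ldots$, together with absolute continuity of $k$ and dominated convergence, to recover the identity in the limit $h\to0$.
\item Then let the time cutoff converge to the indicator function, using $u\in C(L^{q+1}_{\rm loc})$.
\end{itemize}

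\textbf{Nonlocal term.} For fixed $\tau$ the nonlocal term is treated exactly as in the standard fractional Caccioppoli inequality. Split $\mathbb R^N\times\mathbb R^N$ into $K_\rho\times K_\rho$ and the two symmetric off-diagonal pieces.
\begin{itemize}
\item On $K_\rho\times K_\rho$, the pointwise inequality
\[
U(x,y)\,(w_+(x)\psi^p(x)-w_+(y)\psi^p(y))\ge \tfrac1\gamma|w_+(x)-w_+(y)|^p\min\{\psi(x),\psi(y)\}^p-\gamma\max\{w_+(x),w_+(y)\}^p|\psi(x)-\psi(y)|^p
\]
holds, and symmetrically for $w_-$. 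This follows by cases on the signs, Young's inequality, and \eqref{kernel}. It yields the energy term on the left and the $\max\{w\}^p|\psi(x)-\psi(y)|^p$ term on the right.
\item For $x\in K_\rho$ and $y\notin K_\rho$, $\psi(y)=0$. We bound $-U(x,y)w_+(x)\le (u(y)-u(x))_+^{p-1}w_+(x)\le w_+(y)^{p-1}w_+(x)$, using $u(y)-u(x)\le u(y)-k=w_+(y)$ wherever $w_+(x)>0$. Combined with the upper bound in \eqref{kernel}, this gives the tail term. The minus case is analogous.
\end{itemize}

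Since the level $k$ depends only on $\tau$, the spatial computations are unaffected by the moving level. Collecting all terms gives \eqref{3.1}.
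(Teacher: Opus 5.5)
Your proposal is correct and is the standard argument for this estimate. The paper does not prove the lemma itself but imports it from \cite[Lemma 2.4]{SSY26}, and your route produces each term of \eqref{3.1}: you test with $\pm w_\pm\psi^p$, use $\partial_k\bm h_\pm(u,k)=\mp(u^q-k^q)_\pm$ (valid since $u\geq0$, $k>0$) to generate the $k'$ term, and split $\mathcal E$ into its local and tail parts in the usual way.

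One point to tidy is the regularization. Since $k$ is only absolutely continuous, put the mollification on the equation (your $\partial_t[\![u^q]\!]_h$ route with the monotonicity inequality) rather than inside the test function. The reason is that $\partial_\tau\bigl(([\![u]\!]_{\bar h}-k)_\pm\psi^p\bigr)$ contains $k'\in L^1$, so it need not lie in $L^{q+1}$, contrary to your admissibility remark.
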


    \subsection{Local boundedness}
    In the following, we devote to showing the quantitative $L^\infty$ estimate stated in Theorem~\ref{thm-1-3}. We first establish qualitative local boundedness under
    the condition $\lambda_{q+1}=N(p-q-1)+(q+1)sp>0$. This preliminary result provides the boundedness needed for the subsequent iteration in this parameter range. We then proceed to derive the quantitative estimate.
	\begin{lemma}\label{lb-qualitative}
	Let $0<p-1<q$ and
	$\lambda_{q+1}=N(p-q-1)+(q+1)sp>0$. Suppose that $u$ is a weak subsolution to \eqref{1.1}
	satisfying $u\geq0$ in $Q_{\rho,\theta}=K_\rho(x_0)\times(t_0-\theta,t_0]\Subset E_T$.
	Then we have $u\in L^\infty(Q_{\rho/2,\theta/2})$.
	\end{lemma}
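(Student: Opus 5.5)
We argue by a De Giorgi iteration based on the energy estimate of Lemma \ref{lem-energy-moving-level} with constant levels $k(\tau)\equiv k_j$. The point is that the iteration does not need the final bound to be sharp. We only need $u\in L^\infty(Q_{\rho/2,\theta/2})$, possibly with a constant depending on $u$ itself, for instance through $\|u\|_{L^{q+1}(Q_{\rho,\theta})}$, $\sup_\tau\int_{K_\rho}u^{q+1}$ and the tail $\mathcal T_+(u;x_0,\rho/2;t_0-\theta,t_0)$. These quantities are finite because $u\in C_{\rm loc}(L^{q+1}_{\rm loc})\cap L^{p-1}_{\rm loc}(L^{p-1}_{sp})$.

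\emph{Setup.} Fix shrinking radii $\rho_j=\frac\rho2+\frac\rho{2^{j+1}}$ and times $\theta_j=\frac\theta2+\frac\theta{2^{j+1}}$, with intermediate radii $\tilde\rho_j=\frac{\rho_j+\rho_{j+1}}{2}$. Choose cutoffs $\psi_j$ supported in $Q_{\tilde\rho_j,\theta_j}$, equal to $1$ on $Q_{\rho_{j+1},\theta_{j+1}}$, with $|\nabla\psi_j|\lesssim 2^j/\rho$ and $|\partial_t\psi_j|\lesssim 2^j/\theta$. Take levels $k_j=k(2-2^{-j})$ with $k\ge 1$ to be chosen large, and set $w_j=(u-k_j)_+$.

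\emph{Energy estimate on each cylinder.} Apply Lemma \ref{lem-energy-moving-level} with the $+$ sign. Since $\psi_j$ vanishes at the initial time and $k'=0$, the only terms are $\bm h_+|\partial_t\psi^p|$, the gradient-of-cutoff term and the tail term.
\begin{itemize}
\item By Lemma \ref{lem-2-4}, on $\{u>k_j\}$ with $k\ge1$ and $q>p-1>0$ we have $\bm h_+(u,k_j)\approx (u+k_j)^{q-1}w_j^2$. This bounds $\bm h_+$ from below by $c\,k^{q-1}w_j^2$ when $q\ge1$, or by $c\,w_j^{q+1}$ in general. From above it gives $\bm h_+(u,k_j)\le \gamma u^{q+1}\chi_{\{u>k_j\}}$.
\item The local cutoff terms are bounded by $\gamma 2^{jp}\rho^{-p}\iint u^{p}\chi_{\{u>k_j\}}$. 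Using $u>k_j\ge 1$ and $p<q+1$, this is at most $\gamma 2^{jp}\rho^{-p}\iint u^{q+1}\chi_{\{u>k_j\}}$.
\item The tail term splits into $y\in K_\rho\setminus K_{\tilde\rho_j}$ and $y$ outside $K_\rho$. Because $\mathrm{supp}\,\psi_j\subset K_{\tilde\rho_j}$, we have $|x-y|\gtrsim 2^{-j}|y-x_0|$ there. This bounds the tail term by $\gamma 2^{j(N+sp)}\rho^{-N-sp}\iint_{Q_{\rho,\theta}} u^{p}\,\chi_{\{u>k_j\}}$ plus $\gamma 2^{j(N+sp)}\int\!\! \sup_x w_j\,\cdot(\text{tail at }\tau)\,d\tau$.
\end{itemize}
The last term is the \emph{main obstacle}: the tail is only integrable in time, not bounded. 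We first try to control $\sup_x w_j$ by replacing $w_j(x,\tau)$ with $w_j(x,\tau)^{?}\chi$ and using Hölder in $x$ at each time. Concretely, since $w_j\le u\chi_{\{u>k_{j}\}}$ and $u\le u^{q}/k^{q-1}$ on that set, we get
\[
\int_{K_{\rho}} w_j\,dx \le \Big(\sup_\tau\int_{K_\rho}u^{q+1}\Big)^{\frac{1}{q+1}}|\{u(\cdot,\tau)>k_j\}|^{\frac q{q+1}}.
\]
Integrating against the tail in time then gives $\mathcal T_+\cdot\sup_\tau|A_j(\tau)|^{q/(q+1)}$, where $A_j(\tau)$ is the superlevel set at time $\tau$. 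This still closes with some gain, because $\sup_\tau|A_j(\tau)|$ is controlled by the $\sup_\tau\int \bm h_+$ term on the left at level $k_{j}$ via Chebyshev: $|A_{j+1}(\tau)|\le (2^{j}/k)^{q+1}\int w_{j}^{q+1}(\tau)$.

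\emph{Closing the iteration.} Set
\[
Y_j=\sup_\tau\int_{K_{\rho_j}}w_j^{q+1}+\iint_{Q_j}|w_j|^{p(1+\frac{(q+1)s}{N})}.
\]
Bound the left side of the energy estimate from below by Lemma \ref{lem-2-2} with $h=q+1$. Bound the right-hand terms $\iint u^{q+1}\chi_{\{u>k_{j+1}\}}$ by $\iint w_j^{m}(2^j/k)^{m-q-1}$, where $m=p\frac{N+s(q+1)}{N}$. This uses the hypothesis $\lambda_{q+1}>0$, which by Remark \ref{rem-1-4} is equivalent to $m>q+1$ (the relation $\lambda_m=\frac{N+sp}{N}\lambda_{q+1}$). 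The result is a recursion $Y_{j+1}\le C(u,k)\,b^jY_j^{1+\delta}$ with $\delta>0$ determined by $\frac{sp}{N}$ and $m/(q+1)$, and with $C$ depending polynomially on $k^{-1}$, $\rho$, $\theta$, $\mathcal T_+$ and $\sup_\tau\int u^{q+1}$. Since $Y_0\le \sup_\tau\int_{K_\rho}u^{q+1}+\|u\|^m_{L^m(Q_{\rho,\theta})}<\infty$ (finite by Lemma \ref{lem-2-2}), we can choose $k$ large so that the smallness condition of Lemma \ref{lem-2-5} holds. The degree of $k^{-1}$ in $C$ is positive precisely because $m>q+1$. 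Then $Y_j\to0$, so $u\le 2k$ a.e. in $Q_{\rho/2,\theta/2}$.
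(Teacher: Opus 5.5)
\textbf{The gap is in the step you flag as the main obstacle: the far part of the tail.} Your proposed remedy does not close the iteration.

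With constant levels, the far tail enters the energy inequality at level $k_{j+1}$ as
$\int T(\tau)\int_{K_\rho}w_{j+1}(x,\tau)\,dx\,d\tau$, where $T(\tau)=\int_{\mathbb R^N\setminus K_\rho}u_+^{p-1}(y,\tau)|y-x_0|^{-N-sp}\,dy$ is only integrable in time. It can therefore only be paired with $\sup_\tau\int w_{j+1}(\cdot,\tau)\,dx$. Your H\"older--Chebyshev bound gives $\mathcal T_+\,(\sup_\tau\int u^{q+1})^{\frac1{q+1}}(2^j/k)^q\,Y_j^{\frac q{q+1}}$. This is a power of $Y_j$ strictly less than $1$, a loss rather than a gain.

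Even the sharpest version of this step cannot help. On $\{u>k_{j+1}\}$ one has $w_{j+1}\le(2^{j+1}/k)^q w_j^{q+1}$, which yields only $\mathcal T_+(2^{j+1}/k)^q\sup_\tau\int w_j^{q+1}$. That term is linear in $Y_j$, with a coefficient growing like $2^{jq}$. The reason is that the energy controls $w_j$ uniformly in time only in $L^{q+1}_x$. The $L^m$ gain from Lemma~\ref{lem-2-2} is a space--time gain and cannot be paired with an $L^1_t$ weight. A recursion $Y_{j+1}\le Kb^jY_j^{1+\delta}+Lb^jY_j$ does not force $Y_j\to0$ for any choice of $k$; the linear part alone allows $Y_j$ to grow. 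So Lemma~\ref{lem-2-5} cannot be applied, and the recursion $Y_{j+1}\le C(u,k)b^jY_j^{1+\delta}$ you claim is not established.

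\textbf{The missing idea is the one the paper uses: levels that move in time to absorb the tail.} Take
$k_j(\tau)=k(1-2^{-j})+\ell(\tau)$ with $\ell(\tau)=Ck^{1-q}\mathcal T_+(u;0,\rho;-\theta,\tau)$ when $q\ge1$. When $0<q<1$, take $k_j=[k^q(1-2^{-j})^q+\ell(\tau)]^{1/q}$ with $\ell=C\mathcal T_+$. Lemma~\ref{lem-energy-moving-level} then produces the good term $\int\ell'(\tau)\int(u^q-\tilde k_j^q)_+\zeta^p$ on the left-hand side.

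This term absorbs the far tail pointwise in time once $C$ is large, because $\ell'(\tau)=Ck^{1-q}T(\tau)$ and $(u^q-\tilde k_j^q)_+\ge\gamma k^{q-1}w_j$. The choice $k^q\ge4C\mathcal T_+$ keeps every level below $\gamma k$, so the final bound stays finite.

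Only the near tail over $K_\rho\setminus K_j$ then remains. It is bounded by $\gamma b^j\rho^{-N-sp}\sup_\tau\int_{K_\rho}u^{p-1}\cdot\iint w_j$, not by $\iint u^p\chi$ as you wrote. This is a space--time integral of $w_j$, so Chebyshev applies, and the rest of your iteration goes through.
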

	\begin{proof}
		By translation, we may assume that $(x_0,t_0)=(0,0)$. For $j=0,1,2,\ldots$, define
		\begin{align*}
		\rho_j=\frac\rho2+\frac\rho{2^{j+1}},
		\qquad
		\widetilde\rho_j=\frac{\rho_j+\rho_{j+1}}2,
		\qquad
		\widehat\rho_j=\frac{3\rho_j+\rho_{j+1}}4,
		\end{align*}
		\begin{align*}
		\theta_j=\frac\theta2+\frac\theta{2^{j+1}}, \qquad
		\widetilde\theta_j=\frac{\theta_j+\theta_{j+1}}2,
		\qquad
		\widehat\theta_j=\frac{3\theta_j+\theta_{j+1}}4.
		\end{align*}
		Set the domains
		\begin{align*}
		K_j=K_{\rho_j},\qquad
		\widetilde K_j=K_{\widetilde\rho_j},\qquad
		\widehat K_j=K_{\widehat\rho_j},
		\end{align*}
		\begin{align*}
		Q_j=K_j\times(-\theta_j,0],
		\qquad
		\widetilde Q_j=\widetilde K_j\times(-\widetilde\theta_j,0],
		\qquad
		\widehat Q_j=\widehat K_j\times(-\widehat\theta_j,0].
		\end{align*}
Choose a smooth cutoff function $\zeta$ on $K_j\times[-\theta_j,0]$, compactly supported in $\widehat K_j$ with respect to the spatial variables and vanishing for $\tau\leq-\widehat\theta_j$, such that
		\begin{align*}
		0\leq\zeta\leq1,\qquad\zeta=1\ \text{in }\widetilde Q_j,\qquad|D\zeta|\leq\frac{\gamma2^j}{\rho},\qquad|\partial_\tau\zeta|\leq\frac{\gamma2^j}{\theta}.
		\end{align*}
		For a sufficiently large constant $C>0$, define 
		\begin{align*}
		\ell(\tau)=\begin{cases}
		Ck^{1-q}\mathcal T_+(u;0,\rho;-\theta,\tau),&q\geq1,\\[2pt]
		C\mathcal T_+(u;0,\rho;-\theta,\tau),&0<q<1,
		\end{cases}
		\end{align*}
		where $k>0$ will be chosen later. The levels are then defined by
			\begin{align*}
			k_j(\tau)=\begin{cases}
			k(1-2^{-j})+\ell(\tau),&q\geq1,\\[2pt]
			\bigl[k^q(1-2^{-j})^q+\ell(\tau)\bigr]^{\frac1q},&0<q<1,
			\end{cases}
		\end{align*}
		and
		\begin{align*}
		\widetilde k_j(\tau)=\frac{k_j(\tau)+k_{j+1}(\tau)}{2}.
		\end{align*}
		By construction, the levels are absolutely continuous and nondecreasing. 
		 Set 
		 \begin{align*} 
		 	w_j=(u-\widetilde k_j)_+,\,\text{ for } q\geq 1 \,\text{ and } \,\, w_j=\bigl(u_+^q-\widetilde k_j(\tau)^q\bigr)_+^{\frac1q}\, \text{ for } 0<q<1.
		 	\end{align*}
		 
		For $q\geq1$, applying Lemma~\ref{lem-energy-moving-level} with the level $\widetilde k_j$ and the cutoff function $\zeta$, and then using Lemma~\ref{lem-2-4}, we obtain
		\begin{align}
			\label{3.2}
			&\operatorname*{ess\,sup}_{-\theta_j<\tau<0}\int_{K_j}w_j^{q+1}\zeta^p\,dx+\int_{-\theta_j}^0\int_{K_j}\ell'(\tau)(u^q(x,\tau)-\widetilde k_j^q(\tau))_+\zeta^p\,dxd\tau\nonumber\\
			&+\int_{-\theta_j}^0\int_{K_j}\int_{K_j}\frac{|w_j(x,\tau)-w_j(y,\tau)|^p\min\{\zeta(x,\tau),\zeta(y,\tau)\}^p}{|x-y|^{N+sp}}\,dxdyd\tau\nonumber\\
			\leq&\gamma\iint_{Q_j}u^{q+1}\chi_{\{u>\widetilde k_j\}}|\partial_\tau\zeta^p|\,dxd\tau
			\nonumber\\
			&+\gamma\int_{-\theta_j}^0\int_{K_j}\int_{K_j}\frac{\max\{u(x,\tau),u(y,\tau)\}^p
				|\zeta(x,\tau)-\zeta(y,\tau)|^p}{|x-y|^{N+sp}}\nonumber\\
				&\qquad\qquad\qquad\qquad\qquad\qquad\qquad\times\chi_{\{\max\{u(x,\tau),u(y,\tau)\}>\widetilde k_j(\tau)\}}\,dxdyd\tau	\nonumber\\
			&+\gamma\int_{-\theta_j}^0\int_{K_j}w_j(x,\tau)\zeta^p(x,\tau)\int_{\mathbb R^N\setminus K_j}\frac{u_+(y,\tau)^{p-1}}{|x-y|^{N+sp}}\,dydxd\tau\nonumber\\
			&=:I_1+I_2+I_3,
		\end{align}
		where $\gamma>0$ depends only on the \textit{data}. 
		
	   For $0<q<1$, a standard regularization argument justifies using $w_j\zeta^p$ as a test function
	   in the weak formulation \eqref{1.5}. The time term is then evaluated using the identity
		\begin{align*}
		\partial_\tau(u^q)w_j=\frac{q}{q+1}\partial_\tau(w_j^{q+1})+\bigl(\widetilde k_j(\tau)^q\bigr)'w_j.
		\end{align*}
		For each fixed $\tau$, the map
		\begin{align*}
		z\longmapsto\bigl(z^q-\widetilde k_j(\tau)^q\bigr)_+^{1/q}, \qquad z\geq0,
		\end{align*}
		is nondecreasing, convex and $1$-Lipschitz, and vanishes at zero.
		These properties together with Young's inequality yield, for $x,y\in K_j$,
		\begin{align*}
			&|u(x,\tau)-u(y,\tau)|^{p-2}
			\bigl(u(x,\tau)-u(y,\tau)\bigr)\bigl(w_j(x,\tau)\zeta(x,\tau)^p-w_j(y,\tau)\zeta(y,\tau)^p\bigr)\\
	    	\geq&\frac12|w_j(x,\tau)-w_j(y,\tau)|^p\min\{\zeta(x,\tau),\zeta(y,\tau)\}^p\\
			&-\gamma(p)\max\{u(x,\tau),u(y,\tau)\}^p|\zeta(x,\tau)-\zeta(y,\tau)|^p\chi_{\{\max\{u(x,\tau),u(y,\tau)\}>\widetilde k_j(\tau)\}}.
		\end{align*}
		For the exterior interaction, the nonnegativity of $u$
		on the support of the cutoff function gives
		\begin{align*}
			|u(x,\tau)-u(y,\tau)|^{p-2}
			\bigl(u(x,\tau)-u(y,\tau)\bigr)w_j(x,\tau)\geq-u_+(y,\tau)^{p-1}w_j(x,\tau).
		\end{align*}
		Combining these estimates, we obtain \eqref{3.2}, with the second term on the left-hand side
		replaced by
        \begin{align*}
		\int_{-\theta_j}^0\int_{K_j}
		(\widetilde k_j^q(\tau))'w_j\zeta^p\,dxd\tau.
		\end{align*}
		
		We now estimate the left-hand side of
	\eqref{3.2}. Choose $k$ such that
	\begin{align}
		\label{3.3}
		C\mathcal T_+(u;0,\rho;-\theta,0)\leq\frac{k^q}{4}.
	\end{align}
	By construction, we can see
	\begin{align}
	\label{3.4}
	\widetilde k_j\geq\frac k4,\qquad k_{j+1}\leq \gamma(q)k,\qquad k_{j+1}-k_j\geq\frac{k}{2^{j+1}}.
	\end{align}
	For $q\geq1$, we have
	\begin{align*}
	(u-k_{j+1})_+^{q+1}=\bigl[w_j-(k_{j+1}-\widetilde k_j)\bigr]_+^{q+1}\leq w_j^{q+1}.
	\end{align*}
	For $0<q<1$, it follows from \eqref{3.4} that
\begin{align*}
	\frac{d}{dr}(r^q-\widetilde k_j^q)^{1/q}\geq \gamma^{-1}2^{-j(1-q)/q},\qquad r\geq k_{j+1}.
\end{align*}
Consequently,
\begin{align*}
	(u-k_{j+1})_+
	\leq \gamma 2^{j(1-q)/q}w_j,
\end{align*}
and hence
\begin{align*}
	(u-k_{j+1})_+^{q+1}\leq \gamma 2^{j(1-q)(q+1)/q}w_j^{q+1}.
\end{align*}
Moreover, to estimate the nonlocal term, we use
\begin{align*}
	|(u(x,\tau)-k_{j+1})_+-(u(y,\tau)-k_{j+1})_+|\leq \gamma 2^{j(1-q)/q}|w_j(x,\tau)-w_j(y,\tau)|
\end{align*}
for almost every $x,y\in K_j$.

	We next estimate the terms $I_1$–$I_3$ on the right-hand side of \eqref{3.2}. Based on the properties of the cutoff function $\zeta$ and the levels, we obtain
	\begin{align*}
		I_1
		&\leq \frac{\gamma b^j}{\theta}
		\iint_{Q_j}
		u^{q+1}\chi_{\{u>\widetilde k_j\}}
		\,dxd\tau\\
		&\leq \frac{\gamma b^j}{\theta}
		\iint_{Q_j}(u-k_j)_+^{q+1}
		\,dxd\tau.
	\end{align*}
	By using the properties of the cutoff function $\zeta$ again, we obtain
	\begin{align*}
		I_2
		&\leq
		\frac{\gamma b^j}{\rho^{sp}}
		\iint_{Q_j}
		u^p\chi_{\{u>\widetilde k_j\}}
		\,dxd\tau\\
		&\leq
		\frac{\gamma b^j}{\rho^{sp}k^{q+1-p}}
		\iint_{Q_j}(u-k_j)_+^{q+1}
		\,dxd\tau\\
		&\leq
		\frac{\gamma b^j}{\theta}
		\iint_{Q_j}(u-k_j)_+^{q+1}
		\,dxd\tau,
	\end{align*}
	where the second inequality follows from
	\begin{align*}
		u^p\chi_{\{u>\widetilde k_j\}}
		\leq \gamma b^j k^{p-q-1}
		(u-k_j)_+^{q+1},
	\end{align*}
	and the last inequality holds provided that
	\begin{align}
		\label{3.5}
		k\geq
		\left(\frac{\theta}{\rho^{sp}}\right)^
		{\frac{1}{q+1-p}}.
	\end{align}
To estimate $I_3$, we split $ I_3=I_{3,1}+I_{3,2}$, where $I_{3,1}$ and $I_{3,2}$ denote
the contributions from
$K_\rho\setminus K_j$ and
$\mathbb R^N\setminus K_\rho$, respectively. By the properties of $\zeta$ and the levels,
we have
\begin{align*}
	I_{3,1}
	&\leq
	\frac{\gamma b^j}{\rho^{N+sp}}
	\left(
	\operatorname*{ess\,sup}_{-\theta<\tau<0}
	\int_{K_\rho}u^{p-1}\,dx
	\right)
	\iint_{Q_j}w_j\zeta^p\,dxd\tau\\
	&\leq
	\frac{\gamma b^j}{\rho^{N+sp}k^q}
	\left(
	\operatorname*{ess\,sup}_{-\theta<\tau<0}
	\int_{K_\rho}u^{p-1}\,dx
	\right)
	\iint_{Q_j}(u-k_j)_+^{q+1}
	\,dxd\tau\\
	&\leq
	\frac{\gamma b^j}{\theta}
	\iint_{Q_j}(u-k_j)_+^{q+1}
	\,dxd\tau,
\end{align*}
provided that
\begin{align}
	\label{k}
	\frac{\theta}{\rho^{N+sp}k^q}
	\operatorname*{ess\,sup}_{-\theta<\tau<0}
	\int_{K_\rho}u^{p-1}\,dx
	\leq1.
\end{align}
The supremum is finite by the local
$L^{q+1}$ time continuity of $u$
and H\"older's inequality.
For $I_{3,2}$, we derive
\begin{align*}
	I_{3,2}
	&\leq
	\gamma\int_{-\theta_j}^0
	\int_{K_j}w_j\zeta^p
	\int_{\mathbb R^N\setminus K_\rho}
	\frac{u_+(y,\tau)^{p-1}}
	{|y|^{N+sp}}\,dydxd\tau,
\end{align*}
which can be absorbed in the second term on the left-hand side by choosing $C>0$ sufficiently large and using
\begin{align*}
	(u^q-\widetilde k_j^q)_+\geq \gamma(q) k^{q-1}w_j,\qquad\text{ for} \,\, q\geq 1,
\end{align*}
and 
\begin{align*}
	(\widetilde k_j^q(\tau))'=\frac{\ell'}2\widetilde k_j^{q-1}\bigl(k_j^{1-q}+k_{j+1}^{1-q}\bigr)
	\geq\frac{\ell'}2, \qquad\text{ for} \,\, 0<q<1.
\end{align*}
	Once the constant $C$ is fixed, we can choose $k$ large enough to satisfy the requirements \eqref{3.3}, \eqref{3.5} and \eqref{k}.
	
	Combining the above estimates, we deduce that
	\begin{align}
		\label{3.6}
		&\operatorname*{ess\,sup}_{-\widetilde\theta_j<\tau<0}\int_{\widetilde K_j}(u-k_{j+1})_+^{q+1}\,dx\nonumber\\
		&+
		\int_{-\widetilde\theta_j}^0\int_{\widetilde K_j}\int_{\widetilde K_j}
		\frac{|(u-k_{j+1})_+(x,\tau)-(u-k_{j+1})_+(y,\tau)|^p}{|x-y|^{N+sp}}\,dxdyd\tau
		\nonumber\\
		\leq&\frac{\gamma b^j}{\theta}\iint_{Q_j}(u-k_j)_+^{q+1}\,dxd\tau,
	\end{align}
	where $\gamma>0$ and $b>1$ depend only on the \textit{data}.
	
	Set
	\begin{align*}
	m=p\frac{N+s(q+1)}N,\qquad Y_j=\mint\!\!\mint_{Q_j}(u-k_j)_+^{q+1}\,dxd\tau.
	\end{align*}
	Since $\lambda_{q+1}>0$, we have $m>q+1$. Applying Lemma~\ref{lem-2-2} with $h=q+1$ to
	$(u-k_{j+1})_+$ in $\widetilde Q_j$, and using
	\eqref{3.6}, we obtain
	\begin{align*}
		&\iint_{Q_{j+1}}(u-k_{j+1})_+^m\,dxd\tau\leq\iint_{\widetilde Q_j}(u-k_{j+1})_+^m\,dxd\tau\\
		\leq&
		\gamma\Bigg[\widetilde \rho_j^{sp}\int_{-\widetilde\theta_j}^0
		\iint_{\widetilde K_j\times\widetilde K_j}\frac{|(u-k_{j+1})_+(x,\tau)-(u-k_{j+1})_+(y,\tau)|^p}
		{|x-y|^{N+sp}}\,dxdyd\tau\\
		&\quad
		+\iint_{\widetilde Q_j}(u-k_{j+1})_+^p\,dxd\tau\Bigg]\times
		\left[\operatorname*{ess\,sup}_{-\widetilde\theta_j<\tau<0}\mint_{\widetilde K_j}(u-k_{j+1})_+^{q+1}\,dx\right]^{\frac{sp}{N}}\\
		\leq&\gamma b^j\left[\frac1{\theta^{\frac{N+sp}{N}}}+\frac{k^{p-q-1}}{\rho^{sp}\theta^{\frac{sp}{N}}}
		\right]\left[\iint_{Q_j}(u-k_j)_+^{q+1}\,dxd\tau\right]^{\frac{N+sp}{N}}.
	\end{align*}
Here we utilized the fact that $\rho/2\leq\widetilde\rho_j\leq\rho$. By our choice of $k$ in \eqref{3.5} and H\"{o}lder's inequality, we have
\begin{align*}
	Y_{j+1}\leq
	\gamma b^j
	\left(\frac{\rho^{sp}}\theta\right)^{\frac{q+1}m}
	k^{-\frac{(q+1)\lambda_{q+1}}{Nm}}
	Y_j^{1+\frac{(q+1)sp}{Nm}}.
\end{align*}
	Note that the local $L^{q+1}$ time continuity of $u$ ensures
	\begin{align*}
	Y_0\leq\mint\!\!\mint_{Q_{0}}u^{q+1}\,dxd\tau<\infty.
	\end{align*}
	Choosing $k$ additionally so that
	\begin{align*}
	k\geq\gamma\left(\frac{\rho^{sp}}\theta\right)^{\frac N{\lambda_{q+1}}}
	\left(\mint\!\!\mint_{Q_{0}}u^{q+1}\,dxd\tau\right)^{\frac{sp}{\lambda_{q+1}}},
	\end{align*}
	we conclude from Lemma~\ref{lem-2-5}
	that $Y_j\to0$ as $j\to\infty$. 
	By \eqref{3.4}, the truncation levels satisfy $
	k_j(\tau)\rightarrow k_\infty(\tau)\leq \gamma k$ for all $\tau\in[-\theta,0]$.
	Hence $u\leq \gamma k$ almost everywhere in $Q_{\rho/2,\theta/2}$, which completes the proof.
	\end{proof}

	\subsubsection{An iterative inequality for $r\geq q+1$}
	Let $(x_0,t_0)=(0,0)$.  When $r\leq m$, the condition $\lambda_r>0$ implies $\lambda_{q+1}>0$. Lemma~\ref{lb-qualitative} and a finite covering argument therefore give
	\begin{align*}
	\operatorname*{ess\,sup}_{Q_{\sigma_2\rho,\sigma_2\theta}}u<\infty,
	\qquad \frac12<\sigma_2\leq\frac34.
	\end{align*}
	When $r>m$, the boundedness follows from the hypothesis
	of Theorem \ref{thm-1-3}. We now derive the estimate in a fixed cylinder under the assumption
	\begin{align*}
	M_0:=\operatorname*{ess\,sup}_{Q_{\rho,\theta}}u<\infty.
	\end{align*}
	The case $M_0=0$ is trivial. This estimate will only be applied on the smaller cylinders above, where boundedness has already been established. The final estimate is independent of $M_0$.

	For $\sigma\in(0,1)$, define decreasing sequences
	\begin{align*}
		\rho_j&=\sigma\rho+2^{-j}(1-\sigma)\rho,
		&\widetilde\rho_j&=\frac{\rho_j+\rho_{j+1}}2,
		&\widehat\rho_j&=\frac{3\rho_j+\rho_{j+1}}4,\\
		\theta_j&=\sigma\theta+2^{-j}(1-\sigma)\theta,
		&\widetilde\theta_j&=\frac{\theta_j+\theta_{j+1}}2,
		&\widehat\theta_j&=\frac{3\theta_j+\theta_{j+1}}4,
	\end{align*}
	for $j=0,1,\ldots$.
	Set
	\begin{align*}
	K_j=K_{\rho_j},
	\qquad\widetilde K_j=K_{\widetilde\rho_j},
	\qquad\widehat K_j=K_{\widehat\rho_j},
	\end{align*}
	\begin{align*}
	Q_j=K_j\times(-\theta_j,0],
	\qquad
	\widetilde Q_j=\widetilde K_j\times(-\widetilde\theta_j,0],
	\qquad
	\widehat Q_j=\widehat K_j\times(-\widehat\theta_j,0].
	\end{align*}
	For $k>0$ to be chosen later, define
	\begin{equation}
		\label{3.7}
		\ell(\tau)=\frac{C}{(1-\sigma)^{N+sp}}
		\begin{cases}
			k^{1-q}\mathcal T_+(u;0,\rho;-\theta,\tau),&q\geq1,\\[2pt]
			\|u\|_{\infty,Q_{\rho,\theta}}^{1-q}\mathcal T_+(u;0,\rho;-\theta,\tau),&0<q<1.
		\end{cases}
	\end{equation}
	Here $C>0$ is a structural constant to be fixed later. Define the levels by
	\begin{align*}
	k_j(\tau)=k-\frac{k}{2^j}+\ell(\tau),
	\qquad
	\widetilde k_j(\tau)=\frac{k_j(\tau)+k_{j+1}(\tau)}2.
	\end{align*}
	We choose $k$ so that
	\begin{align}
	\label{3.8}
	0\leq\ell(\tau)\leq\frac k4,\qquad -\theta\leq \tau\leq0.
	\end{align}
	By the definition of the levels, we derive
	\begin{align*}
	\widetilde k_j-k_j
	=k_{j+1}-\widetilde k_j
	=\frac{k}{2^{j+2}}.
	\end{align*}
	Choose a smooth cutoff function $\zeta$ on $K_j\times[-\theta_j,0]$, compactly supported in $\widehat K_j$ with respect to the space variable and equal to zero for $\tau\leq-\widehat\theta_j$, satisfying
	\begin{align*}
	0\leq\zeta\leq1,
	\qquad
	|D\zeta|\leq\frac{\gamma2^{j+3}}{(1-\sigma)\rho},
	\qquad
	|\partial_\tau\zeta|\leq\frac{\gamma2^{j+3}}{(1-\sigma)\theta},
	\qquad
	\zeta=1\text{ in }\widetilde Q_j.
	\end{align*}
	Applying Lemma~\ref{lem-energy-moving-level} in this framework gives
	\begin{align}
		\label{3.9}
		&\operatorname*{ess\,sup}_{-\widetilde\theta_j<\tau<0}\int_{\widetilde K_j}\bm h_+(u,\widetilde k_j)\,dx+\int_{-\theta_j}^0\ell'(\tau)\int_{K_j}(u^q-\widetilde k_j^q)_+\zeta^p\,dxd\tau\nonumber\\
		&+\frac1\gamma\int_{-\widetilde\theta_j}^0\int_{\widetilde K_j}\int_{\widetilde K_j}
		\frac{|(u-\widetilde k_j)_+(x,\tau)-(u-\widetilde k_j)_+(y,\tau)|^p}
		{|x-y|^{N+sp}}\,dxdyd\tau\nonumber\\
		\leq&
		\frac{\gamma2^j}{(1-\sigma)\theta}\iint_{Q_j}\bm h_+(u,\widetilde k_j)\,dxd\tau+
		\frac{\gamma2^{pj}}{(1-\sigma)^p\rho^{sp}}\iint_{Q_j}(u-\widetilde k_j)_+^p\,dxd\tau
		\nonumber\\
		&+\gamma\int_{-\theta_j}^0\int_{K_j}(u-\widetilde k_j)_+(x,\tau)\zeta^p(x,\tau)
		\int_{\mathbb R^N\setminus K_j}\frac{(u-\widetilde k_j)_+(y,\tau)^{p-1}}{|x-y|^{N+sp}}
		\,dydxd\tau,
	\end{align}
	where the constant $\gamma$ depends only on the $\textit{data}$. Next, we estimate the first term on the left-hand side of \eqref{3.9}. On the set $\{u>k_{j+1}\}$, we have
	\begin{align*}
	1\leq\frac{u+\widetilde k_j}{u-\widetilde k_j}
	\leq\frac{2k_{j+1}}{k_{j+1}-\widetilde k_j}
	\leq2^{j+4},
	\end{align*}
	which together with Lemma \ref{lem-2-4} yields
	\begin{equation}
		\label{3.10}
		\bm h_+(u,\widetilde k_j)
		\geq\gamma^{-1}2^{-(j+4)(1-q)_+}
		(u-k_{j+1})_+^{q+1},
	\end{equation}
	where $\gamma>0$ depends only on $q$. Besides, on the set $\{u>\widetilde k_j\}$,
	\begin{align*}
	1\leq\frac{u+\widetilde k_j}{u-k_j}\leq\frac{2\widetilde{k}_j}{\widetilde{k}_j-k_j}\leq2^{j+4}.
	\end{align*}
Therefore, we have by Lemma \ref{lem-2-4} that
	\begin{equation}
		\label{3.11}
		\bm h_+(u,\widetilde k_j)
		\leq\gamma2^{(j+4)(q-1)_+}
		(u-k_j)_+^{q+1}\chi_{\{u>\widetilde k_j\}}.
	\end{equation}
	For the last term in \eqref{3.9}, we split the region. For almost every $\tau\in(-\theta_j,0)$ and
	$x\in\operatorname{supp}\zeta(\cdot,\tau)$, we have
	\begin{align}
		\label{3.12}
		&\int_{\mathbb R^N\setminus K_j}\frac{(u-\widetilde k_j)_+(y,\tau)^{p-1}}{|x-y|^{N+sp}}\,dy
		\nonumber\\
		\leq&
		\frac{\gamma2^{spj}}{(1-\sigma)^{sp}\rho^{sp}}\|u\|_{\infty,Q_0}^{p-1}+
		\frac{\gamma}{(1-\sigma)^{N+sp}}\int_{\mathbb R^N\setminus K_\rho}
		\frac{u_+^{p-1}(y,\tau)}{|y|^{N+sp}}\,dy.
	\end{align}	
	Indeed,
	\begin{align*}
	\rho_j-\widehat\rho_j=2^{-j-3}(1-\sigma)\rho,
	\qquad
	\widehat\rho_j\leq\widehat\rho_0
	=\rho-\frac{(1-\sigma)\rho}{8}.
	\end{align*}
	Moreover, when $q \geq 1$ we have
	$$ 
	(u^q-\widetilde k_j^q)_+\geq \gamma(q)k^{q-1}(u-\widetilde k_j)_+,
	$$
	and when $0<q<1$, we get
	$$(u^q-\widetilde k_j^q)_+\geq q\|u\|_{\infty,Q_0}^{q-1}(u-\widetilde k_j)_+.$$
	Therefore, by choosing $C$ large enough in \eqref{3.7}, the last term in \eqref{3.12} can be absorbed into the second term on the left-hand side of \eqref{3.9}. Combining \eqref{3.10}--\eqref{3.12}, we obtain
	\begin{align}
		\label{3.13}
		&2^{-j(1-q)_+}\operatorname*{ess\,sup}_{-\widetilde\theta_j<\tau<0}
		\int_{\widetilde K_j}(u-k_{j+1})_+^{q+1}\,dx\nonumber\\
		&\quad+\int_{-\widetilde\theta_j}^0	\int_{\widetilde K_j}\int_{\widetilde K_j}
		\frac{|(u-\widetilde k_j)_+(x,\tau)-(u-\widetilde k_j)_+(y,\tau)|^p}{|x-y|^{N+sp}}\,dxdyd\tau
		\nonumber\\
		&\leq
		\frac{\gamma2^{j[1+(q-1)_+]}}{(1-\sigma)\theta}\iint_{\widetilde A_j}(u-k_j)_+^{q+1}\,dxd\tau+
		\frac{\gamma2^{pj}}{(1-\sigma)^p\rho^{sp}}\iint_{Q_j}(u-\widetilde k_j)_+^p\,dxd\tau
		\nonumber\\
		&\quad+\frac{\gamma2^{spj}}{(1-\sigma)^{sp}\rho^{sp}}\|u\|_{\infty,Q_0}^{p-1}
		\iint_{Q_j}(u-\widetilde k_j)_+\,dxd\tau,
	\end{align}
	where
	\begin{align*}
	\widetilde A_j:=\{u>\widetilde k_j(\tau)\}\cap Q_j.
	\end{align*}
	Note that
	\begin{equation}
		\label{3.14}
		|\widetilde A_j|\leq\frac{2^{(j+2)r}}{k^r}\iint_{Q_j}(u-k_j)_+^r\,dxd\tau,
	\end{equation}
	which along with the assumption $r\geq q+1>p$ allows us to apply H\"{o}lder's inequality to get
	\begin{align}
		\label{3.15}
		\iint_{\widetilde A_j}(u-k_j)_+^{q+1}\,dxd\tau
		& \leq\left(\iint_{Q_j}(u-k_j)_+^r\,dxd\tau \right)^{\frac{q+1}{r}}|\widetilde{A}_j|^{1-\frac{q+1}{r}} \nonumber\\
		& \leq  \frac{\gamma 2^{(r-q-1)j}}{k^{r-q-1}} \iint_{Q_j}(u-k_j)_+^r\,dxd\tau.
		\end{align}
		Similarly, we can derive
		\begin{align}
		\label{3.16}
		\iint_{Q_j}(u-\widetilde k_j)_+^p\,dxd\tau\leq\frac{\gamma2^{(r-p)j}}{k^{r-p}}
		\iint_{Q_j}(u-k_j)_+^r\,dxd\tau,
		\end{align}
		and
		\begin{align}
		\label{3.17}
		\iint_{Q_j}(u-\widetilde k_j)_+\,dxd\tau
		&\leq\frac{\gamma2^{(r-1)j}}{k^{r-1}}\iint_{Q_j}(u-k_j)_+^r\,dxd\tau.
	\end{align}
	We now require $k$ to satisfy
	\begin{align}
		\label{choice of k1}
		k\geq\left(\frac{\theta}{\rho^{sp}}\right)^{\frac1{q+1-p}}+\left(\frac{\theta}{\rho^{sp}}
		\|u\|_{\infty,Q_0}^{p-1}\right)^{\frac1q},
		\end{align}
		and
		\begin{align}
		\label{choice of k2}
		k\geq
		\begin{cases}
			\left[\dfrac{4C}{(1-\sigma)^{N+sp}}
			\mathcal T_+(u;0,\rho;-\theta,0)\right]^{\frac1q},&q\geq1,\\[6pt]
			\dfrac{4C}{(1-\sigma)^{N+sp}}
			\|u\|_{\infty,Q_0}^{1-q}
			\mathcal T_+(u;0,\rho;-\theta,0),&0<q<1.
		\end{cases}
	\end{align}
	Then it follows from the estimates \eqref{3.13}--\eqref{3.17} that
	\begin{align}
		\label{3.19}
		&2^{-j(1-q)_+}\operatorname*{ess\,sup}_{-\widetilde\theta_j<\tau<0}
		\int_{\widetilde K_j}(u-k_{j+1})_+^{q+1}\,dx\nonumber\\
		&+\int_{-\widetilde\theta_j}^0\int_{\widetilde K_j}\int_{\widetilde K_j}
		\frac{|(u-\widetilde k_j)_+(x,\tau)-(u-\widetilde k_j)_+(y,\tau)|^p}{|x-y|^{N+sp}}\,dxdyd\tau
		\nonumber\\
	   \leq&\frac{\gamma2^{j(N+p+r)}}{(1-\sigma)^{N+p}\sigma^{sp}\theta}
		k^{-(r-q-1)}\iint_{Q_j}(u-k_j)_+^r\,dxd\tau.
	\end{align}

	Let $m=p\frac{N+s(q+1)}{N}$. Applying Lemma~\ref{lem-2-2} with \(h=q+1\) to
	\((u-k_{j+1})_+\) in the cylinder \(\widetilde Q_j\),
	we obtain
	\begin{align*}
		&\iint_{Q_{j+1}}(u-k_{j+1})_+^m\,dxd\tau
		\leq \iint_{\widetilde{Q}_j}(u-k_{j+1})_+^m\,dxd\tau\\
		\leq&\gamma\bigg[ \widetilde{\rho_j}^{sp}\int_{-\widetilde{\theta}_j}^{0} \int_{\widetilde{K}_j } \int_{\widetilde{K}_j } \frac{|(u-k_{j+1})_+(x,\tau)-(u-k_{j+1})_+(y,\tau)|^p}{|x - y|^{N+sp}}\, dxdyd\tau\\
		&\quad+ \int_{-\widetilde{\theta}_j}^{0} \int_{\widetilde{K}_j } |(u-k_{j+1})_+|^p\, dx d\tau\bigg]\times \left[\operatorname*{ess \sup}_{-\widetilde{\theta}_j<\tau<0} \mint_{\widetilde{K}_j \times\{\tau\}}(u-k_{j+1})_+ ^{q+1}\,dx\right]^{\frac{sp}{N}}\nonumber\\
		\leq& \frac{ \gamma 2^{j\left[N+p+r+(1-q)_+ \right]\frac{N+sp}{N}}}{(1-\sigma)^{\frac{(N+p)^2}{N}}\sigma^{\frac{sp(N+sp)}{N}} k^{(r-q-1) \frac{N+sp}{N}}}\left(\frac{1}{\theta^{\frac{N+sp}{N}}}+\frac{k^{p-q-1}}{\theta^{\frac{sp}{N}}\rho^{sp}}\right)\left[\iint_{Q_j}(u-k_j)_+^r\,dxd\tau\right]^{\frac{N+sp}{N}},
	\end{align*}
	where we used \eqref{3.19} to get the last line. The above estimate together with \eqref{choice of k1} gives
	\begin{align}
		\label{3.20}
		& \mint\!\!\mint_{Q_{j+1}}\left(u-k_{j+1}\right)_+^m\,dxd\tau\nonumber\\
		\leq&  \frac{ \gamma 2^{j\left[N+p+r+(1-q)_+ \right]\frac{N+sp}{N}}}{(1-\sigma)^{\frac{(N+p)^2}{N}}\sigma^{\frac{sp(N+sp)}{N}} k^{(r-q-1) \frac{N+sp}{N}}} \frac{\rho^{sp}}{\theta}\left[\mint\!\!\mint_{Q_j}(u-k_j)_+^r\,dxd\tau\right]^{\frac{N+sp}{N}},
	\end{align}
	where $\gamma$ depends only on $N,p,s,q,r,\Lambda$.
	
	Note that inequality \eqref{3.20} holds for every $r\geq q+1$. Hereafter, we distinguish three cases: $q+1\leq r\leq m$; $r\leq m$ and $r<q+1$; $r>m$ to discuss the local boundedness of weak subsolutions to \eqref{1.1}.

	\subsubsection{The case $q+1\leq r\leq m$}
	Set
	\begin{align*}
	Y_j=\mint\!\!\mint_{Q_j}(u-k_j)_+^r\,dxd\tau.
	\end{align*}
	By H\"older's inequality and \eqref{3.14}, there exists a constant $\gamma$ depending only on the \textit{data} and $r$ such that
		\begin{align*}
		Y_{j+1}&\leq \frac{1}{|Q_{j+1}|}\left[\iint_{Q_{j+1}}(u-k_{j+1})_+^m\,dxd\tau\right]^{\frac{r}{m}}|\widetilde{A}_j|^{1-\frac{r}{m}}\\
		&\leq \gamma\left[\mint\!\!\mint_{Q_{j+1}}(u-k_{j+1})_+^m\,dxd\tau\right]^{\frac{r}{m}}\left[\frac{2^{(j+2)r}}{k^r} \mint\!\!\mint_{Q_j}(u-k_j)_+^r\,dxd\tau\right]^{1-\frac{r}{m}}.
	\end{align*}
	Based on the definition $\lambda_r=N(p-q-1)+rsp$, it reads from \eqref{3.20} that
	\begin{align*}
		Y_{j+1}\leq \frac{\gamma b^j}{(1-\sigma)^{\frac{ r(N+p)^2}{N m}}\sigma^{\frac{rsp(N+sp)}{Nm}}}\left(\frac{\rho^{sp}}{\theta}\right)^{\frac{r}{m}} k^{-\frac{r \lambda_r}{N m}} Y_j^{1+\frac{rsp}{N m}},
	\end{align*}
	where $b>1$ depends only on the \textit{data} and $r$.
	Since $k_0(\tau)=\ell(\tau)\geq0$, we have
	\begin{align*}
		Y_0\leq\mint\!\!\mint_{Q_{0}} u^r\,dxd\tau.
	\end{align*}
	Lemma~\ref{lem-2-5} gives $Y_j\to0$ if in addition to
	\eqref{choice of k1} and \eqref{choice of k2}, we choose
	\begin{align}
		\label{3.21}
		k\geq
		\frac{\gamma}{[\sigma(1-\sigma)]^{\frac{(N+p)^2}{\lambda_r}}}\left(\frac{\rho^{sp}}\theta\right)^{\frac{N}{\lambda_r}}\left(\mint\!\!\mint_{Q_0}u^r\,dxd\tau\right)^{\frac{sp}{\lambda_r}}.
	\end{align}
	Since
	\begin{align*}
	k_j(\tau)\rightarrow k+\ell(\tau)\leq\frac{5k}{4},
	\end{align*}
	the convergence $Y_j\to0$ and the choice of $k$ yield, for
	$q\geq1$,
	\begin{align}
		\label{3.22}
		\operatorname*{ess\,sup}_{Q_{\sigma\rho,\sigma\theta}}u
		&\leq\frac{\gamma}{[\sigma(1-\sigma)]^{\frac{(N+p)^2}{\lambda_r}}}
		\left(\frac{\rho^{sp}}{\theta}\right)^{\frac{N}{\lambda_r}}
		\left(\mint\!\!\mint_{Q_0}u^r\,dxd\tau\right)^{\frac{sp}{\lambda_r}}
		\nonumber\\
		&\quad+
		\gamma\left(\frac{\theta}{\rho^{sp}}\right)^{\frac{1}{q+1-p}}
		+\gamma\left(\frac{\theta}{\rho^{sp}}
		\|u\|_{\infty,Q_{0}}^{p-1}\right)^{
		\frac1q}
		\nonumber\\
		&\quad+
		\frac{\gamma}{(1-\sigma)^{\frac{N+sp}{q}}}
		\mathcal T_+(u;0,\rho;-\theta,0)^{\frac1q}.
	\end{align}
	For $0<q<1$, the same estimate holds with the last term
	on the right-hand side replaced by
	\begin{align*}
	\frac{\gamma}{(1-\sigma)^{N+sp}}\|u\|_{\infty,Q_{0}}^{1-q}\mathcal T_+(u;0,\rho;-\theta,0).
	\end{align*}
	
	We next remove the dependence on $\|u\|_{\infty,Q_{0}}$. Set
	\begin{align*}
	M_\sigma:=\operatorname*{ess\,sup}_{Q_{\sigma\rho,\sigma\theta}}u.
	\end{align*}
	For
	\begin{align*}
	\frac12\leq\sigma_1<\sigma_2\leq\frac34,
	\end{align*}
	apply \eqref{3.22} in $Q_{\sigma_2\rho,\sigma_2\theta}$ with $\sigma=\sigma_1/\sigma_2$. Since $(p-1)/q<1$ and $1-q\in(0,1)$ for $0<q<1$, we use Young's inequality to derive
	\begin{align}
		\label{3.23}
		M_{\sigma_1}&\leq\frac12M_{\sigma_2}+\frac{\gamma}{(\sigma_2-\sigma_1)^\kappa}
		\Bigg[\left(\frac{\rho^{sp}}{\theta}\right)^{\frac{N}{\lambda_r}}
		\left(\mint\!\!\mint_{Q_{\rho,\theta}}u^r\,dxd\tau\right)^{\frac{sp}{\lambda_r}}
		\nonumber\\
		&\hspace{2cm}+\left(\frac{\theta}{\rho^{sp}}\right)^{\frac{1}{q+1-p}}
		+\mathcal T_+(u;0,\rho/2;-\theta,0)^{\frac1q}\Bigg],
	\end{align}
	where $\gamma,\kappa>0$ depend only on $N,p,s,q,\Lambda, r$.
	Since $M_{3/4}<\infty$, an application of Lemma~\ref{lem-2-6} on $[1/2,3/4]$ yields
	the desired result.
	
	\subsubsection{The case $r\leq m$ and $r<q+1$}
	Since
	\begin{align*}
	\lambda_{q+1}=\lambda_r+sp(q+1-r)>0,
	\end{align*}
	we have $q+1<m$. Then we can apply estimate \eqref{3.22} with exponent $q+1$. For $q\geq1$, applying it in $Q_{\sigma_2\rho,\sigma_2\theta}$ with $\sigma=\sigma_1/\sigma_2$ gives
	\begin{align}
		\label{3.24}
		M_{\sigma_1}
		&\leq\frac{\gamma}{(\sigma_2-\sigma_1)^\kappa}\left(\frac{\rho^{sp}}{\theta}\right)^{\frac{N}{\lambda_{q+1}}}\left(\mint\!\!\mint_{Q_{\sigma_2\rho,\sigma_2\theta}}
		u^{q+1}\,dxd\tau\right)^{\frac{sp}{\lambda_{q+1}}}\nonumber\\
		&\quad+\frac{\gamma}{(\sigma_2-\sigma_1)^\kappa}\left[\left(\frac{\theta}{\rho^{sp}}\right)^{\frac{1}{q+1-p}}+\left(\frac{\theta}{\rho^{sp}}\right)^{\frac1q}M_{\sigma_2}^{\frac{p-1}{q}}\right]\nonumber\\
		&\quad+\frac{\gamma}{(\sigma_2-\sigma_1)^\kappa}\mathcal T_+^{\frac1q}(u;0,\rho/2;-\theta,0),
	\end{align}
	where $\gamma,\kappa>0$ depend only on the \textit{data}.
	For $0<q<1$, the corresponding estimate is obtained by replacing the last term
	on the right-hand side with
	\begin{align*}
	\frac{\gamma}{(\sigma_2-\sigma_1)^\kappa}M_{\sigma_2}^{1-q}
	\mathcal T_+(u;0,\rho/2;-\theta,0).
	\end{align*}
	Using Young's inequality, the first term in \eqref{3.24} is bounded by
\begin{align*}
	&\frac{\gamma}{(\sigma_2-\sigma_1)^\kappa}
	M_{\sigma_2}^{1-\frac{\lambda_r}{\lambda_{q+1}}}
	\left(\frac{\rho^{sp}}{\theta}\right)^{\frac{N}{\lambda_{q+1}}}
	\left(\mint\!\!\mint_{Q_{\rho,\theta}}u^r\,dxd\tau\right)^{\frac{sp}{\lambda_{q+1}}}\\
    \leq&
	\frac14M_{\sigma_2}+\frac{\gamma}{(\sigma_2-\sigma_1)^{\kappa\frac{\lambda_{q+1}}{\lambda_r}}}\left(\frac{\rho^{sp}}{\theta}\right)^{\frac{N}{\lambda_r}}\left(\mint\!\!\mint_{Q_{\rho,\theta}}u^r\,dxd\tau\right)^{\frac{sp}{\lambda_r}}.
\end{align*}
Treating the remaining terms as in the preceding case gives
\eqref{3.23}. Since $M_{3/4}<\infty$, Lemma~\ref{lem-2-6} yields \eqref{1.7} in this case.
	
\subsubsection{The case $r>m$}
According to the assumption $\lambda_r>0$, we can see $r>q+1$ from $r>m$. If not, there will hold $0<\lambda_r\leq\lambda_{q+1}=N(m-q-1)$, which implies that $q+1<m$, and this would yield a contradiction. In this case, we use the local boundedness assumed in Theorem~\ref{thm-1-3}.
By \eqref{3.20} and the definition of $Y_j$, we obtain
		\begin{align*}
		Y_{j+1}  =&\mint\!\!\mint_{Q_{j+1}}(u-k_{j+1})_+^r\,dxd\tau  \\
		\leq&\|u\|_{\infty, Q_{0}}^{r-m} \mint\!\!\mint_{Q_{j+1}}(u-k_{j+1})_+^m\,dxd\tau \\
		\leq& \gamma\|u\|_{\infty, Q_{0}}^{r-m} \frac{\rho^{sp}}{\theta} \frac{b^j}{[(1-\sigma)\sigma]^{\frac{(N+p)^2}{N}} k^{(r-q-1) \frac{N+sp}{N}}}Y_j^{1+\frac{sp}{N}},
	\end{align*}
	where $b>1$ depends only on the \textit{data} and $r$. Since $k_0(\tau)=\ell(\tau)\geq0$, we have
	\begin{align*}
	Y_0
	=\mint\!\!\mint_{Q_0}(u-\ell(\tau))_+^r\,dxd\tau
	\leq
	\mint\!\!\mint_{Q_0}u^r\,dxd\tau.
	\end{align*}
	Utilizing Lemma~\ref{lem-2-5}, it holds that $Y_j\to0$ as $j\to\infty$ provided that
	\begin{align*}
		\mint\!\!\mint_{Q_0} u^r\,dxd\tau \leq \gamma^{-1}\|u\|_{\infty, Q_0}^{-(r-m) \frac{N}{sp}}[(1-\sigma)\sigma]^{\frac{(N+p)^2}{sp}}\left(\frac{\theta}{\rho^{sp}}\right)^{\frac{N}{sp}} k^{\frac{(r-q-1)(N+sp)}{sp}},
	\end{align*}
	Thus, in addition to \eqref{choice of k1} and \eqref{choice of k2}, it suffices to choose
	\begin{align}
		\label{3.25}
		k\geq\frac{\gamma\|u\|_{\infty,Q_0}^{\frac{N(r-m)}{(N+sp)(r-q-1)}}}
		{[\sigma(1-\sigma)]^{\frac{(N+p)^2}{(N+sp)(r-q-1)}}}
		\left(\frac{\rho^{sp}}{\theta}\right)^{\frac{N}{(N+sp)(r-q-1)}}
		\left(\mint\!\!\mint_{Q_0}u^r\,dxd\tau\right)^{\frac{sp}{(N+sp)(r-q-1)}}.
	\end{align}
	Taking the choices of $k$ in \eqref{choice of k1}, \eqref{choice of k2} and \eqref{3.25} into account, we deduce from $Y_j\to 0$ that, for $q\geq 1$,
	\begin{align*}
		\operatorname*{ess\,sup}_{Q_{\sigma\rho,\sigma\theta}}u
		&\leq
		\frac{\gamma\|u\|_{\infty,Q_0}^{\frac{N(r-m)}{(N+sp)(r-q-1)}}}{	[\sigma(1-\sigma)]^{\frac{(N+p)^2}{(N+sp)(r-q-1)}}}
		\left(\frac{\rho^{sp}}{\theta}\right)^{\frac{N}{(N+sp)(r-q-1)}}
		\left(\mint\!\!\mint_{Q_0}u^r\,dxd\tau\right)^{\frac{sp}{(N+sp)(r-q-1)}}\\
		&\quad+\gamma\left(\frac{\theta}{\rho^{sp}}\right)^{\frac{1}{q+1-p}}
		+\gamma\left(\frac{\theta}{\rho^{sp}}\|u\|_{\infty,Q_{\rho,\theta}}^{p-1}
		\right)^{\frac1q}+\frac{\gamma}{(1-\sigma)^{\frac{N+sp}{q}}}
		\mathcal T_+^{\frac1q}(u;0,\rho;-\theta,0),
	\end{align*}
where $\gamma$ depends only on the \textit{data} and $r$.
	For $0<q<1$, the last term is replaced by
	\begin{align*}
	\frac{\gamma}{(1-\sigma)^{N+sp}}\|u\|_{\infty,Q_{\rho,\theta}}^{1-q}\mathcal T_+(u;0,\rho;-\theta,0).
	\end{align*}
	We may proceed as in the previous case and apply Lemma \ref{lem-2-6} to conclude the proof.
	
	\section{Integral-type Harnack inequality}
	\label{sec4}
	In this section, we establish Theorem \ref{thm-1-7} by combining Theorem \ref{thm-1-3} with the  integral-type Harnack inequality in Theorem \ref{thm-1-6}. For convenience, we assume that the interval under consideration is $[0,\theta]$ by a time translation.
	To proceed, we require the following technical lemma, which provides a refined energy estimate.
	\begin{lemma}
		\label{lem-4-1}
		Let $0<p-1<q$ and $u$ be a weak solution to \eqref{1.1} satisfying
		$u\geq 0$ in
		$K_{\rho}(x_0)\times(0,\theta]\Subset E_T$.
		Let $0<\sigma<1$, $\sigma'=(1+\sigma)/2$, and let
		$\xi\in C_c^1\left(K_{\sigma'\rho}(x_0)\right)$ be a cutoff function
		satisfying
		\begin{align*}
			\xi=1 \quad \text{in } K_{\sigma\rho}(x_0),\qquad 0\leq\xi\leq 1,
			\qquad |\nabla \xi|\leq\frac{\gamma}{(1-\sigma)\rho}\quad \text{in } \mathbb{R}^N.
		\end{align*}
		For $\tau\in(0,\theta)$, set
		\begin{align*}
			A_\tau:=\{(x,y)\in K_{\sigma'\rho}(x_0)\times K_{\sigma'\rho}(x_0):u(x,\tau)>u(y,\tau),\,\xi(x)>\xi(y)\},
		\end{align*}
		and
		\begin{align*}
			B_\tau:=\{(x,y)\in K_{\sigma'\rho}(x_0)\times K_{\sigma'\rho}(x_0):
			u(x,\tau)>u(y,\tau),\,\xi(x)<\xi(y)\}.
		\end{align*}
		Let
		\begin{align}
			\label{4.1}
			\kappa:=
			\max\left\{\left(\frac{\theta}{\rho^{sp}}\right)^{\frac{1}{q+1-p}},\,
			\mathcal T_-(u;x_0,\frac{\rho}{2};0,\theta)^{\frac1q}\right\}.
		\end{align}
		Then there exists a constant $\gamma>0$ depending only on
		$N,p,s,q,\Lambda$ such that
		\begin{align*}
			&\int_0^\theta\tau^{\frac{1}{p}}\iint_{A_\tau}
			\frac{(u(x,\tau)-u(y,\tau))^p}{|x-y|^{N+sp}}(u(x,\tau)+\kappa)^{-\frac{q+1}{p}}
			\xi^p(x)\,dxdyd\tau\\
			&\quad+
			\int_0^\theta\tau^{\frac{1}{p}}\iint_{B_\tau}\frac{(u(x,\tau)-u(y,\tau))^p}{|x-y|^{N+sp}}
			(u(x,\tau)+\kappa)^{-\frac{q+1}{p}}\xi^p(y)\,dxdyd\tau\\
			&\leq
			\frac{\gamma\rho^s}{(1-\sigma)^{N+p}}\left[
			\sup_{0<\tau<\theta}\int_{K_\rho(x_0)}u^q(x,\tau)\,dx+\kappa^q\rho^N
			\right]^{\frac{(q+1)(p-1)}{pq}}\left(\frac{\theta}{\rho^{\lambda}}\right)^{\frac{1}{p}},
		\end{align*}
		where
		\begin{align*}
		\lambda:=\frac{\lambda_q}{q}=\frac{N}{q}(p-q-1)+sp.
		\end{align*}
	\end{lemma}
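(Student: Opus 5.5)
The plan is to test the equation with
\[
\varphi(x,\tau)=\tau^{\frac1p}\,(u(x,\tau)+\kappa)^{-\delta}\,\xi^p(x),\qquad \delta:=\frac{q+1}{p}-1=\frac{q+1-p}{p}>0,
\]
where $\delta>0$ because $q>p-1$. Since $u\ge0$ in $K_\rho(x_0)\times(0,\theta]$ and $\kappa>0$, this function is bounded and admissible. I would justify it through the exponential time mollification of Lemma~\ref{lem-2-8} and pass to the limit in the standard way. The weight $\tau^{1/p}$ removes any boundary term at $\tau=0$. The negative power $-\delta$ is chosen so that the local part of $\mathcal E$ produces, with a favorable sign, the weight $(u+\kappa)^{-1-\delta}=(u+\kappa)^{-\frac{q+1}{p}}$ that appears in the statement.

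\textbf{Time term.} Let $G(u)=\int_0^u q\,s^{q-1}(s+\kappa)^{-\delta}\,ds$. Then $G(u)\le \gamma u^{q-\delta}$, and $q-\delta=\frac{(q+1)(p-1)}{p}$. The time part of the weak formulation becomes
\[
\Big[\int G(u)\,\tau^{1/p}\xi^p\,dx\Big]_{\tau=\theta}-\frac1p\int_0^\theta\tau^{\frac1p-1}\int G(u)\xi^p\,dxd\tau .
\]
Since $q-\delta<q$, H\"older's inequality in $x$ on $K_\rho$ bounds both pieces by
\[
\gamma\,\theta^{1/p}\,\rho^{N\delta/q}\Big[\sup_\tau\int_{K_\rho}u^q\,dx+\kappa^q\rho^N\Big]^{\frac{(q+1)(p-1)}{pq}}.
\]
A direct check gives $N\delta/q=s-\lambda/p$, so this is exactly the right-hand side of the lemma.

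\textbf{Nonlocal term on $K_{\sigma'\rho}\times K_{\sigma'\rho}$.} By symmetry I restrict to pairs with $u(x)>u(y)$ and write $a=u(x)+\kappa$, $b=u(y)+\kappa$. The expression to analyze is
\[
(a-b)^{p-1}\big(a^{-\delta}\xi^p(x)-b^{-\delta}\xi^p(y)\big).
\]
On $A_\tau$, where $\xi(x)>\xi(y)$, I write it as
\[
(a-b)^{p-1}(a^{-\delta}-b^{-\delta})\xi^p(x)+(a-b)^{p-1}b^{-\delta}\big(\xi^p(x)-\xi^p(y)\big).
\]
The first piece is $\le -c\,(a-b)^p a^{-1-\delta}\xi^p(x)$ by the mean value theorem, because $b<a$. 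On $B_\tau$ I make the symmetric splitting with $\xi^p(y)$ in the first piece. This is why both $A_\tau$ and $B_\tau$ appear on the left-hand side. After multiplying by $-1$, these first pieces are the good terms. The cutoff pieces are absorbed using $|\xi^p(x)-\xi^p(y)|\le p\max\{\xi(x),\xi(y)\}^{p-1}|\xi(x)-\xi(y)|$ and Young's inequality, weighted as $\epsilon(a-b)^pa^{-1-\delta}\xi^p+\gamma_\epsilon\, a^{p-1-\delta}|\xi(x)-\xi(y)|^p$. What remains is
\[
\frac{\gamma\theta^{1+1/p}}{(1-\sigma)^p\rho^{sp}}\sup_\tau\int_{K_\rho}(u+\kappa)^{p-1-\delta}\,dx .
\]
This is handled by H\"older's inequality against $\int u^q$, and $\kappa\ge(\theta/\rho^{sp})^{1/(q+1-p)}$ is used to trade $\theta/\rho^{sp}$ for a power of $\kappa$. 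The outcome matches the same right-hand side.

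\textbf{Tail and main obstacle.} For $x\in K_{\sigma'\rho}$ and $y\notin K_{\sigma'\rho}$, the pairs with $u(y)\ge u(x)$ and $u(y)\ge0$ enter $-\mathcal E$ with a favorable sign, so they can be discarded. This is why no $\mathcal T_+$ appears. The remaining contribution is at most
\[
\int \tau^{1/p}(u+\kappa)^{-\delta}\xi^p\,\frac{u(x)^{p-1}+u_-(y)^{p-1}}{|x-y|^{N+sp}}\,dydx .
\]
The part involving $u(x)^{p-1}$ is treated like the cutoff term: it uses the distance $(1-\sigma)\rho/2$, which produces the factor $(1-\sigma)^{-N-sp}$. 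The part involving $u_-(y)^{p-1}$ is split into $y\in K_\rho\setminus K_{\sigma'\rho}$, where $u\ge0$ and it vanishes, and $y\notin K_\rho$, where it is controlled by $\kappa^{-\delta}\rho^N(1-\sigma)^{-N-sp}\mathcal T_-(u;x_0,\rho/2;0,\theta)$. Then $\kappa\ge\mathcal T_-^{1/q}$ gives the bound $\kappa^{q-\delta}\rho^N$. I expect this bookkeeping to be the main obstacle, that is, checking that every error term carries the same powers of $\theta$ and $\rho$ as the time term; the choice \eqref{4.1} of $\kappa$ is designed precisely for this. Collecting the three estimates gives the lemma with $\gamma$ depending only on the data.
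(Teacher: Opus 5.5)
Your plan is the paper's proof in outline. It uses the same test function $\tau^{1/p}(u+\kappa)^{-(q+1-p)/p}\xi^p$, justified with the backward mollification $[\![u]\!]_{\bar h}$; that is the direction you need, because the nonlinearity is decreasing in $u$ and so the commutator term then has the right sign. It also uses the same splitting on $A_\tau$ and $B_\tau$, with the mean value theorem producing the good terms, and the same use of $\kappa\ge\mathcal T_-^{1/q}$ for the exterior negative part. Replacing the paper's pair-dependent inequality in \eqref{4.9} by plain Young is fine.

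One intermediate claim, however, is false as written. In your splitting on $A_\tau$ the cutoff piece carries $b^{-\delta}$ with $b=u(y)+\kappa<a$. Young's inequality therefore actually gives
\[
(a-b)^{p-1}b^{-\delta}\bigl(\xi^p(x)-\xi^p(y)\bigr)\le \epsilon\,(a-b)^p a^{-\frac{q+1}{p}}\xi^p(x)+\gamma_\epsilon\, a^{\frac{(q+1)(p-1)}{p}}\, b^{-(q+1-p)}\bigl(\xi(x)-\xi(y)\bigr)^p ,
\]
and $b^{-(q+1-p)}$ cannot be traded for $a^{-(q+1-p)}$. Take $u(y)=0$, $u(x)=M\kappa$, $\xi(x)=1$, $\xi(y)=1-\eta$ with $\eta$ fixed. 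Then your claimed bound with weight $a^{p-1-\delta}$ fails by a factor of order $M^{\delta}$ as $M\to\infty$, for any fixed $\epsilon,\gamma_\epsilon$. The repair is to use $b\ge\kappa$. This gives the error weight $\kappa^{-(q+1-p)}(u(x)+\kappa)^{\frac{(q+1)(p-1)}{p}}$, exactly as in \eqref{4.9}. After integrating, as in \eqref{4.13}, the factor $\theta/(\rho^{sp}\kappa^{q+1-p})\le 1$ yields the same right-hand side as the time term.

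A smaller point concerns the $u(x)^{p-1}$ part of the exterior interaction. For $y\in K_\rho\setminus K_{\sigma'\rho}$ there is no lower bound on $|x-y|$ for $x\in\operatorname{supp}\xi$, since the support may reach $\partial K_{\sigma'\rho}$. So that piece cannot ``use the distance'' there. Instead use $\xi(y)=0$ and
\[
\xi^p(x)=|\xi(x)-\xi(y)|^p\le\gamma\,((1-\sigma)\rho)^{-p}|x-y|^p,
\]
which is the paper's $I_4$, taken over $K_{\sigma'\rho}\times K_\rho$. Keep the distance argument, which produces $(1-\sigma)^{-N-sp}$, for $y\notin K_\rho$ only. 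With these two corrections the bookkeeping closes as you describe.
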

	
	\begin{proof}
		We choose
		\begin{align*}
			\varphi_h(x,\tau)=\tau^{\frac{1}{p}}\left([\![u]\!]_ {\bar{h}}+\kappa\right)^{-\frac{q+1-p}{p}} \xi^p(x) \psi_{\varepsilon}(\tau) 
		\end{align*}
		as a test function in the weak formulation \eqref{1.5}. Here, $[\![u]\!]_{\bar h}$ denotes the
		backward exponential mollification on $(0,\theta)$, with the upper
		integration limit $T$ replaced by $\theta$. Moreover, $\xi\in C_0^1\left(K_{\frac{\rho(1+\sigma)}{2}}\right)$ satisfies $\xi=1$ in $K_{\sigma\rho}$ and $|\nabla\xi|\leq \frac{\gamma}{(1-\sigma)\rho}$, for $\varepsilon>0$, $\psi_\varepsilon(\tau)$ is a Lipschitz function such that $\psi_{\varepsilon}=1$ in $(\varepsilon, \theta-\varepsilon), \psi_{\varepsilon}=0$ outside $(0, \theta)$, and it is linearly interpolated otherwise. Then we obtain
		\begin{align}
			\label{4.2}
			&\iint_{E_T}-u^q \partial_\tau \varphi_h\,dxd\tau+\frac12
			\int_{0}^{T}\int_{\mathbb{R}^N}\int_{\mathbb{R}^N}U(x,y,\tau)(\varphi_h(x,\tau)-\varphi_h(y,\tau))\,d\mu d\tau=0.
		\end{align}
		We begin by estimating the first term in \eqref{4.2}. Indeed,
		\begin{align}
			\label{4.3}
			&-\iint_{E_T} u^q \partial_\tau \varphi_h\,dxd\tau\nonumber\\
			=&-\iint_{E_T} [\![u]\!]_{\bar{h}}^q \partial_\tau \varphi_h\,dxd\tau+\iint_{E_T}\left([\![u]\!]_{\bar{h}}^q-u^q\right) \partial_\tau \varphi_h\,dxd\tau\nonumber\\
			=&\iint_{E_T} \partial_\tau[\![u]\!]_{\bar{h}}^q \varphi_h\,dxd\tau+\iint_{E_T}\left([\![u]\!]_{\bar{h}}^q-u^q\right)\left([\![u]\!]_{\bar{h}}+\kappa\right)^{-\frac{q+1-p}{p}} \xi^p(x) \partial_\tau(\tau^{\frac{1}{p}} \psi_{\varepsilon}(\tau))\,dxd\tau\nonumber\\
			&-\frac{q+1-p}{p} \iint_{E_T}\left([\![u]\!]_{\bar{h}}+\kappa\right)^{-\frac{q+1}{p}}\left([\![u]\!]_{\bar{h}}^q-u^q\right) \frac{1}{h}\left([\![u]\!]_{\bar{h}}-u\right) \tau^{\frac{1}{p}} \xi^p(x) \psi_{\varepsilon}(\tau)\,dxd\tau,
		\end{align}
		where we used Lemma \ref{lem-2-8} (ii) to obtain the last line. Notice that the second term tends to zero as $h\rightarrow0$ by Lemma \ref{lem-2-8} (i). Moreover, the third term is nonpositive owing to the monotonicity of the function $s\mapsto s^q$. It therefore remains to estimate the first term. Integrating by parts and then letting first $h\to0$
		and subsequently $\varepsilon\to0$, we derive
		\begin{align}
			\label{4.4}
			&\iint_{E_T} \partial_\tau[\![u]\!]_{\bar{h}}^q \varphi_h\,dxd\tau\nonumber\\
			= & q \iint_{E_T} \xi^p(x) \tau^{\frac{1}{p}} \psi_{\varepsilon}(\tau) \partial_\tau\left[\int_0^{[\![u]\!]_{\bar{h}}} s^{q-1}(s+\kappa)^{-\frac{q+1-p}{p}} \,ds\right]dxd\tau\nonumber\\
			= & -q \iint_{E_T} \xi^p(x) \tau^{\frac{1}{p}} \partial_\tau \psi_{\varepsilon}(\tau) \int_0^{[\![u]\!]_{\bar{h}}} s^{q-1}(s+\kappa)^{-\frac{q+1-p}{p}}\,dsdxd\tau \nonumber\\
			& -q \iint_{E_T} \xi^p(x) \partial_\tau \tau^{\frac{1}{p}} \psi_{\varepsilon}(\tau) \int_0^{[\![u]\!]_{\bar{h}}} s^{q-1}(s+\kappa)^{-\frac{q+1-p}{p}} \,dsdxd\tau\nonumber\\
			\rightarrow & \left.q \int_{K_{\rho}} \xi^p(x) \tau^{\frac{1}{p}} \int_0^u s^{q-1}(s+\kappa)^{-\frac{q+1-p}{p}} \,dsdx\right|_0 ^\theta \nonumber\\
			&-q \iint_{K_{\rho} \times(0, \theta)} \xi^p(x) \partial_\tau \tau^{\frac{1}{p}} \int_0^u s^{q-1}(s+\kappa)^{-\frac{q+1-p}{p}}\,dsdxd\tau \nonumber\\			
			= &q \theta^{\frac{1}{p}} \int_{K_{\rho} \times\{\theta\}} \xi^p(x) \int_0^u s^{q-1}(s+\kappa)^{-\frac{q+1-p}{p}} \,dsdx\nonumber\\
			& -q \iint_{K_{\rho} \times(0, \theta)} \xi^p(x) \partial_\tau \tau^{\frac{1}{p}} \int_0^u s^{q-1}(s+\kappa)^{-\frac{q+1-p}{p}} \,dsdxd\tau.			
		\end{align}	
		For the nonlocal term, we first let $h\rightarrow0$. The passage to the limit can be justified by combining Lemma \ref{lem-2-8}(i) with the argument in \cite[Appendix B]{L24}. Then, letting $\varepsilon\rightarrow0$, we obtain
		\begin{align}
			\label{4.5}
			&\int_{0}^{T}\int_{\mathbb{R}^N}\int_{\mathbb{R}^N}|u(x,\tau)-u(y,\tau)|^{p-2}(u(x,\tau)-u(y,\tau))(\varphi_h(x,\tau)-\varphi_h(y,\tau))\,d\mu d\tau\nonumber\\
			\rightarrow& \int_{0}^{\theta}\int_{\mathbb{R}^N}\int_{\mathbb{R}^N} |u(x,\tau)-u(y,\tau)|^{p-2}(u(x,\tau)-u(y,\tau))(\varphi(x,\tau)-\varphi(y,\tau))\,d\mu d\tau.
		\end{align}
		Collecting the estimates in \eqref{4.3}--\eqref{4.5}, we obtain
		\begin{align}
			\label{4.6}
			0\leq & q \theta^{\frac{1}{p}} \int_{K_{\rho} \times\{\theta\}} \xi^p(x) \int_0^u s^{q-1}(s+\kappa)^{-\frac{q+1-p}{p}} \,dsdx\nonumber\\
			&+\frac12\int_{0}^{\theta}\int_{\mathbb{R}^N}\int_{\mathbb{R}^N}U(x,y,\tau) (\varphi(x,\tau)-\varphi(y,\tau))\,d\mu d\tau.
		\end{align}
		
		At this stage, we leave the first term in \eqref{4.6} unchanged and focus on estimating the second term. By the symmetry of the kernel and the integrand with respect to $x$ and $y$, we may restrict our attention to the case $u(x,\tau)> u(y,\tau)$ for some $\tau\in(0,\theta)$. The opposite case can be handled in exactly the same way by interchanging the roles of $x$ and $y$.  
		
		Here, we first consider the case when $u(x,\tau)> u(y,\tau)\geq 0$. We distinguish the cases $\xi(x)>\xi(y)$ and $\xi(x)\leq\xi(y)$. If $\xi(x)>\xi(y)$, then
		\begin{align}
			\label{4.7}
			&U(x,y,\tau)(\varphi(x,\tau)-\varphi(y,\tau))\nonumber\\
			=&\tau^{\frac{1}{p}}(u(x,\tau)-u(y,\tau))^{p-1} \left[ (u(x, \tau) + \kappa)^{\frac{p-q-1}{p}} \xi^p(x) - (u(y, \tau) + \kappa)^{\frac{p-q-1}{p}} \xi^p(y) \right]\nonumber\\
			\leq&\tau^{\frac{1}{p}} (u(x,\tau)-u(y,\tau))^{p-1}\left[ (u(x, \tau) + \kappa)^{\frac{p-q-1}{p}} - (u(y, \tau) + \kappa)^{\frac{p-q-1}{p}}  \right]\xi^p(x)\nonumber \\
			&+\tau^{\frac{1}{p}}(u(x,\tau)-u(y,\tau))^{p-1}(u(y, \tau) + \kappa)^{\frac{p-q-1}{p}}\left(\xi^p(x)-\xi^p(y)\right).
		\end{align}	
		Since $0<p-1<q$, the mean value theorem yields
		\begin{align}
			\label{4.8}
			&(u(x,\tau)-u(y,\tau))^{p-1}\left[ (u(x, \tau) + \kappa)^{\frac{p-q-1}{p}} - (u(y, \tau) + \kappa)^{\frac{p-q-1}{p}}  \right]\xi^p(x)\nonumber\\
			\leq& -\gamma(p,q)(u(x,\tau)-u(y,\tau))^p (u(x, \tau) + \kappa)^{-\frac{q+1}{p}} \xi^p(x).
		\end{align}
		Moreover, to estimate the second term in \eqref{4.7}, we apply the elementary inequality \cite[(2.4)]{L24} with $a=\xi(x)>\xi(y)=b\geq 0$. Here, $\delta\in(0,1)$ is a parameter to be specified later, and $\varepsilon\in(0,1)$ is defined by
		$$\varepsilon = \delta \frac{u(x, \tau) - u(y, \tau)}{u(y, \tau) + \kappa} \left[ \frac{u(y, \tau) + \kappa}{u(x, \tau) + \kappa} \right]^{\frac{q+1}{p}} < 1.$$
	It follows that
		\begin{align*}
			\xi^p(x) - \xi^p(y) \leq& \delta \frac{u(x, \tau) - u(y, \tau)}{u(y, \tau) + \kappa} \left[ \frac{u(y, \tau) + \kappa}{u(x, \tau) + \kappa} \right]^{\frac{q+1}{p}} \xi^p(x) \\
			&+ \frac{\gamma}{\delta^{p-1}} \left[ \frac{u(y, \tau) + \kappa}{u(x, \tau) - u(y, \tau)} \right]^{p-1} \left[ \frac{u(x, \tau) + \kappa}{u(y, \tau) + \kappa} \right]^{\frac{(q+1)(p-1)}{p}} (\xi(x) - \xi(y))^p.
		\end{align*}
		Subsequently,
		\begin{align}
			\label{4.9}
			&(u(x,\tau)-u(y,\tau))^{p-1}(u(y, \tau) + \kappa)^{\frac{p-q-1}{p}}\left(\xi^p(x)-\xi^p(y)\right)\nonumber\\
			\leq&\gamma \delta (u(x, \tau) - u(y, \tau))^p(u(x, \tau) + \kappa)^{-\frac{q+1}{p}} \xi^p(x) \nonumber\\
			&+ \frac{\gamma}{\delta^{p-1} \kappa^{q+1-p}}(u(x, \tau) + \kappa)^{\frac{(q+1)(p-1)}{p}} (\xi(x) - \xi(y))^p.
		\end{align}
		By combining \eqref{4.7}--\eqref{4.9}, and choosing $\delta\in(0,1)$ small enough, we arrive at 
		\begin{align*}
			U(x,y,\tau)(\varphi(x,\tau)-\varphi(y,\tau))\leq&-\gamma \tau^{\frac{1}{p}} (u(x,\tau)-u(y,\tau))^p(u(x, \tau) + \kappa)^{-\frac{q+1}{p}}\xi^p(x) \\
			&+ \frac{\gamma \tau^{\frac{1}{p}}}{ \kappa^{q+1-p}}(u(x, \tau) + \kappa)^{\frac{(q+1)(p-1)}{p}} (\xi(x) - \xi(y))^p.
		\end{align*}
		We now consider the case $\xi(x)<\xi(y)$. Employing the mean value theorem, we have
		\begin{align*}
			\varphi(x,\tau)-\varphi(y,\tau)
			&=\tau^{\frac1p}\left[(u(x,\tau)+\kappa)^{\frac{p-q-1}{p}}\xi^p(x)-(u(y,\tau)+\kappa)^{\frac{p-q-1}{p}}\xi^p(y)\right]\\
			&\leq\tau^{\frac1p}\left[(u(x,\tau)+\kappa)^{\frac{p-q-1}{p}}-(u(y,\tau)+\kappa)^{\frac{p-q-1}{p}}\right]\xi^p(y)\\
			&\leq-\gamma\tau^{\frac1p}(u(x,\tau)-u(y,\tau))(u(x,\tau)+\kappa)^{-\frac{q+1}{p}}\xi^p(y).
		\end{align*}
           Thus there holds
		\begin{align}
			\label{4.10}
			&U(x,y,\tau)\left(\varphi(x,\tau)-\varphi(y,\tau)\right)\leq
			-\gamma\tau^{\frac1p}(u(x,\tau)-u(y,\tau))^p
			(u(x,\tau)+\kappa)^{-\frac{q+1}{p}}\xi^p(y).
		\end{align}
		When $\xi(x)=\xi(y)$, the corresponding contribution is nonpositive and may be discarded. 
		We next turn to the case $u(x,\tau)>u(y,\tau)$ and $u(y,\tau)<0$. In this case, it follows immediately that $y\in \mathbb{R}^N\backslash K_\rho$, and hence $\xi(y)=0$. Consequently,
		\begin{align*}
			&U(x,y,\tau)(\varphi(x,\tau)-\varphi(y,\tau))\leq\gamma \tau^{\frac{1}{p}} (u^{p-1}(x,\tau)+u_-^{p-1}(y,\tau))(u(x, \tau) + \kappa)^{\frac{p-q-1}{p}} \xi^p(x).
		\end{align*}
		Gathering the estimates in the above two cases together, there holds that
		\begin{align}
			\label{4.11}
			0\leq & q \theta^{\frac{1}{p}} \int_{K_{\rho} \times\{\theta\}} \xi^p(x) \int_0^u s^{q-1}(s+\kappa)^{-\frac{q+1-p}{p}} \,dsdx\nonumber\\
			&-\gamma\int_0^\theta\tau^{\frac{1}{p}}\iint_{A_\tau}\frac{(u(x,\tau)-u(y,\tau))^p}{|x-y|^{N+sp}} (u(x, \tau) + \kappa)^{-\frac{q+1}{p}} \xi^p(x)\,dxdyd\tau\nonumber\\
			&-\gamma\int_0^\theta\tau^{\frac{1}{p}}\iint_{B_\tau}\frac{(u(x,\tau)-u(y,\tau))^p}{|x-y|^{N+sp}} (u(x, \tau) + \kappa)^{-\frac{q+1}{p}} \xi^p(y)\,dxdyd\tau\nonumber\\
			&+\frac{\gamma}{\kappa^{q+1-p}}\int_0^\theta\tau^{\frac{1}{p}}\int_{K_{\sigma'\rho}}\int_{ K_{\rho}}(u(x, \tau) + \kappa)^{\frac{(q+1)(p-1)}{p}} \frac{|\xi(x) - \xi(y)|^p}{|x-y|^{N+sp}}\,dydxd\tau\nonumber\\
			&+\frac{\gamma}{ \kappa^{q+1-p}}\int_0^\theta\tau^{\frac{1}{p}}\int_{K_{\sigma'\rho}}\int_{ \mathbb{R}^N\backslash K_{\rho}}(u(x, \tau) + \kappa)^{\frac{(q+1)(p-1)}{p}} \frac{|\xi(x) - \xi(y)|^p}{|x-y|^{N+sp}}\,dydxd\tau\nonumber\\
			&+\gamma\int_0^\theta\tau^{\frac{1}{p}}\int_{K_{\sigma'\rho}}\int_{\mathbb{R}^N\backslash K_{\rho}}\frac{u^{p-1}(x,\tau)+u_-^{p-1}(y,\tau)}{|x-y|^{N+sp}}(u(x, \tau) + \kappa)^{\frac{p-q-1}{p}} \xi^p(x)\,dydxd\tau\nonumber\\
			=:&I_1+I_2+I_3+I_4+I_5+I_6.
		\end{align}
		In what follows, we estimate $I_1$ and $I_4$--$I_6$ separately, while retaining $I_2$ and $I_3$.
		
		\textbf{Estimate of $I_1$:} From the hypothesis $0<p-1<q$, one can easily get
		\begin{align*}
			\frac{(p-1)(q+1)}{pq} \in(0,1) \quad \text { and } \quad \int_0^u s^{q-1}(s+\kappa)^{-\frac{q+1-p}{p}}\,ds \leq \gamma (u+\kappa)^{\frac{(p-1)(q+1)}{p}}.
		\end{align*}	
		Thus we apply H\"{o}lder's inequality to obtain
		\begin{align}
			\label{4.12}
			I_1&\leq\gamma \theta^{\frac{1}{p}}\sup_{0<\tau<\theta} \int_{K_{\rho} \times\{\tau\}} (u+\kappa)^{\frac{(p-1)(q+1)}{p}}\,dx \nonumber\\
			&\leq \gamma \rho^s\left(\frac{\theta}{\rho^\lambda}\right)^{\frac{1}{p}}\left(\sup_{0<\tau<\theta} \int_{K_{\rho} \times\{\tau\}} u^q\,dx+\kappa^q\rho^N\right)^{\frac{(p-1)(q+1)}{p q}}.
		\end{align}	
		
		\textbf{Estimate of $I_4$:} Applying H\"{o}lder's inequality and using the properties of $\xi$ together with the definition of $\kappa$, we get
		\begin{align}
			\label{4.13}
			I_4&\leq \frac{\gamma}{(1- \sigma)^p \rho^p \kappa^{q+1-p}} \int_0^\theta\tau^{\frac{1}{p}}\int_{K_{\sigma'\rho}} \int_{K_\rho} \frac{(u(x, \tau) + \kappa)^{\frac{(q+1)(p-1)}{p}}}{|x - y|^{N+sp-p}}\,dydxd\tau\nonumber\\
			&\leq \frac{\gamma \theta^{1+\frac{1}{p}}}{(1- \sigma)^p \rho^{sp} \kappa^{q+1-p}} \sup_{0<\tau<\theta} \int_{K_{\rho}\times \{\tau\}} (u(x, \tau) + \kappa)^{\frac{(q+1)(p-1)}{p}}\,dx\nonumber\\
			&\leq \frac{\gamma}{(1- \sigma)^p} \rho^s\left(\frac{\theta}{\rho^\lambda}\right)^{\frac{1}{p}}\left(\sup_{0<\tau<\theta} \int_{K_{\rho} \times\{\tau\}} u^q\,dx+\kappa^q\rho^N\right)^{\frac{(p-1)(q+1)}{p q}}.
		\end{align}
		
		\textbf{Estimate of $I_5$:} Notice that for all $x\in K_{\sigma'\rho}$, $y\in\mathbb{R}^N\backslash K_\rho$, there holds
		$$\frac{|y-x_0|}{|y-x|}\leq  1+\frac{|x-x_0|}{|y-x|}\leq\frac{\gamma(N)}{1-\sigma}.$$
		Thus we have 
		\begin{align}
			\label{4.14}
			I_5\leq & \frac{\gamma}{(1 - \sigma)^{N+sp} \kappa^{q+1-p}} \int_0^\theta\tau^{\frac{1}{p}} \int_{K_{\sigma'\rho}}\int_{\mathbb{R}^N\backslash K_\rho}\frac{(u(x, \tau) + \kappa)^{\frac{(q+1)(p-1)}{p}}}{|y - x_0|^{N+sp}} \, dydxd\tau\nonumber\\
			&\leq \frac{\gamma \theta^{1+\frac{1}{p}}}{(1- \sigma)^{N+sp} \rho^{sp} \kappa^{q+1-p}} \sup_{0<\tau<\theta} \int_{K_{\rho}\times \{\tau\}} (u(x, \tau) + \kappa)^{\frac{(q+1)(p-1)}{p}}\,dx\nonumber\\
			&\leq \frac{\gamma}{(1- \sigma)^{N+sp}} \rho^s\left(\frac{\theta}{\rho^\lambda}\right)^{\frac{1}{p}}\left(\sup_{0<\tau<\theta} \int_{K_{\rho} \times\{\tau\}} u^q\,dx+\kappa^q\rho^N\right)^{\frac{(p-1)(q+1)}{p q}}.
		\end{align}
		
		\textbf{Estimate of $I_6$:} We deduce that
		\begin{align}
			\label{4.15}
			I_6\leq&\frac{\gamma}{\kappa^{q+1-p}}\int_0^\theta\tau^{\frac{1}{p}} \int_{K_{\sigma'\rho}}\int_{\mathbb{R}^N\backslash K_\rho}\frac{(u(x,\tau)+\kappa)^{\frac{(q+1)(p-1)}{p}}}{|x-y|^{N+sp}}\,dydxd\tau\nonumber\\
			&+\frac{\gamma}{\kappa^q}\int_0^\theta\tau^{\frac{1}{p}} \int_{K_{\sigma'\rho}}\int_{\mathbb{R}^N\backslash K_\rho}\frac{(u(x,\tau)+\kappa)^{\frac{(q+1)(p-1)}{p}}}{|x-y|^{N+sp}}u_-^{p-1}(y,\tau)\,dydxd\tau\nonumber\\
			\leq &\frac{\gamma}{(1- \sigma)^{N+sp}} \rho^s\left(\frac{\theta}{\rho^\lambda}\right)^{\frac{1}{p}}\left(\sup_{0<\tau<\theta} \int_{K_{\rho} \times\{\tau\}} u^q\,dx+\kappa^q\rho^N\right)^{\frac{(p-1)(q+1)}{p q}}\nonumber\\
			&+\frac{\gamma \theta^{\frac{1}{p}}}{(1-\sigma)^{N+sp} \kappa^q}  \sup_{0<\tau<\theta}\int_{K_\rho\times\{\tau\}} (u(x, \tau) + \kappa)^{\frac{(p - 1)(q+1)}{p}}\, dx \nonumber\\
			&\quad\times \int_{0}^{\theta}\int_{\mathbb{R}^N\backslash K_\rho} \frac{u_-^{p - 1}(y, \tau)}{|y-x_0|^{N+sp}}\,dyd\tau\nonumber\\
			\leq& \frac{\gamma}{(1- \sigma)^{N+sp}} \rho^s\left(\frac{\theta}{\rho^\lambda}\right)^{\frac{1}{p}}\left(\sup_{0<\tau<\theta} \int_{K_{\rho} \times\{\tau\}} u^q\,dx+\kappa^q\rho^N\right)^{\frac{(p-1)(q+1)}{p q}},
		\end{align}
		where we employed the definition of $\mathcal{T}_-$ and $\kappa$.
		
		Substituting \eqref{4.12}--\eqref{4.15} into \eqref{4.11}, we obtain
		\begin{align*}
			&\int_0^\theta\tau^{\frac1p}\iint_{A_\tau}\frac{(u(x,\tau)-u(y,\tau))^p}{|x-y|^{N+sp}}
			(u(x,\tau)+\kappa)^{-\frac{q+1}{p}}\xi^p(x)\,dxdyd\tau\\
			&\quad+\int_0^\theta\tau^{\frac1p}\iint_{B_\tau}\frac{(u(x,\tau)-u(y,\tau))^p}
			{|x-y|^{N+sp}}(u(x,\tau)+\kappa)^{-\frac{q+1}{p}}\xi^p(y)\,dxdyd\tau\\
			&\leq\frac{\gamma\rho^s}{(1-\sigma)^{N+p}}\left(\sup_{0<\tau<\theta}
			\int_{K_\rho\times\{\tau\}}u^q\,dx+\kappa^q\rho^N\right)^{\frac{(p-1)(q+1)}{pq}}
			\left(\frac{\theta}{\rho^\lambda}\right)^{\frac1p}.
		\end{align*}
		This completes the proof.
	\end{proof}
	
	\begin{proof}[Proof of Theorem \ref{thm-1-6}]
		We perform an iteration argument by setting 
		\begin{align*}
			\rho_j=\sum_{n=0}^j \frac{\rho}{2^n}, \quad \widetilde{\rho}_j=\frac{3\rho_j+\rho_{j+1}}{4}, \quad \widehat{\rho}_j=\frac{\rho_j+\rho_{j+1}}{2}, \ \ j=0,1,2,\ldots,
		\end{align*}
		and
		\begin{align*}
			\quad K_j=K_{\rho_j}(x_0), \quad \widetilde{K}_j=K_{\widetilde{\rho}_j}(x_0), \quad \widehat{K}_j=K_{\widehat{\rho}_j}(x_0), \ \ j=0,1,2,\ldots.
		\end{align*}
		Consider the function $\xi_j\in C_c^\infty(\widetilde{K}_j)$ such that $0\leq\xi_j\leq 1$,
		$\xi_j=1$ in $K_j$, and $|\nabla\xi_j|\leq \gamma 2^{j}/\rho$. 
		Moreover, let $t_1, t_2\in[0,\theta]$ such that 
		\begin{align*}
			\int_{K_{2 \rho} }u^q(x,t_2)\,dx=\inf_{0<\tau<\theta} \int_{K_{2\rho}} u^q(x,\tau)\,dx=: A .
		\end{align*}
		Define 
		\begin{align*}
			\int_{K_j}u^q(x,t_1)\,dx=\sup_{0<\tau<\theta} \int_{K_j } u^q (x,\tau)\,dx	=:S_j.
		\end{align*}


		We first consider the case $0\leq t_1\leq t_2\leq\theta$. Testing \eqref{1.1} with $\xi_j^{p+1}$ over $(t_1,t_2)$, we obtain 
		\begin{align}
			\label{4.16}
			S_j\leq&\int_{\widetilde{K}_j } u^q(x,t_1) \xi_j^{p+1}\,dx\nonumber \\
			\leq&\int_{K_{2\rho}}u^q(x,t_2)\,dx+\frac12 \int_{t_1}^{t_2} \int_{\mathbb{R}^N} \int_{\mathbb{R}^N} U(x,y,\tau) (\xi_j^{p+1}(x) - \xi_j^{p+1}(y)) \, d\mu d\tau\nonumber\\
			\leq &\int_{K_{2\rho}}u^q(x,t_2)\,dx+\frac12 \int_{t_1}^{t_2} \int_{\widehat{K}_j}\int_{\widehat{K}_j} U(x,y,\tau)(\xi_j^{p+1}(x) - \xi_j^{p+1}(y))\,d\mu d\tau\nonumber\\
			&+ \gamma \int_{t_1}^{t_2} \int_{\widehat{K}_j}\int_{\mathbb{R}^N\backslash\widehat{K}_j}     \frac{u^{p-1}(x, \tau)}{|x - y|^{N+sp}} \xi_j^{p+1}(x) \,dxdyd\tau\nonumber \\
			&+ \gamma \int_{t_1}^{t_2} \int_{\widehat{K}_j}\int_{\mathbb{R}^N\backslash\widehat{K}_j}     \frac{u_-^{p-1}(y, \tau)}{|x - y|^{N+sp}} \xi_j^{p+1}(x) \,dxdyd\tau \nonumber\\
			=:&J_1+J_2+J_3+J_4.
		\end{align}
		In the following, we derive the estimate of nonlocal terms.
		
		\textbf{Estimate of $J_2$:} By symmetry, we discard the nonpositive contributions
		and then apply the upper bound of \eqref{kernel} to obtain
		\begin{align*}
			J_2 \leq \gamma \int_0^\theta \iint_{A_{j,\tau}} \frac{(u(x, \tau) - u(y, \tau))^{p-1}}{|x - y|^{N+sp}} (\xi_j^{p+1}(x) - \xi_j^{p+1}(y)) \, dx dy d\tau,
		\end{align*}
		where 
		\begin{align*}
			A_{j,\tau} =\left \{(x, y) \in \widehat{K}_j \times \widehat{K}_j : u(x, \tau) > u(y, \tau), \xi_j(x) > \xi_j(y)\right\}\text{  for all } \tau\in(0,\theta).
		\end{align*}
		Define
		\begin{align*}
			\kappa_j:=\max\left\{\left(\frac{\theta}{\rho_{j+1}^{sp}}\right)^{\frac{1}{q+1-p}},\,
			\mathcal T_-\left(u;x_0,\frac{\rho_{j+1}}2;0,\theta\right)^{\frac1q}\right\}.
		\end{align*}
		By H\"{o}lder's inequality, there holds that
		\begin{align}
			\label{4.17}
			J_2\leq& \frac{\gamma 2^j}{\rho} \int_0^\theta \iint_{A_{j,\tau}} \frac{(u(x, \tau) - u(y, \tau))^{p-1}}{|x - y|^{N+sp-1}} \xi_j^p(x) \,dxdy d\tau\nonumber\\
			\leq& \frac{\gamma 2^j}{\rho} \int_0^\theta \iint_{A_{j,\tau}} \left[ (u(x, \tau) - u(y, \tau))^{p-1} (u(x, \tau) + \kappa_j)^{\frac{(q+1)(1-p)}{p^2}} \tau^{\frac{p-1}{p^2}} \right]\nonumber\\
			&\quad\quad\quad\quad\quad\quad\times \left[ (u(x, \tau) + \kappa_j)^{\frac{(q+1)(p-1)}{p^2}} \tau^{\frac{1-p}{p^2}} |x - y| \right] \frac{\xi_j^p(x)}{|x - y|^{N+sp}} \, dxdyd\tau\nonumber\\
			\leq& \frac{\gamma 2^j}{\rho} \left[ \int_0^\theta \tau^{\frac{1}{p}} \iint_{A_{j,\tau}} \frac{(u(x, \tau) - u(y, \tau))^p}{|x - y|^{N+sp}} (u(x, \tau) + \kappa_j)^{-\frac{q+1}{p}} \xi_j^p(x) \, dxdyd\tau \right]^{\frac{p-1}{p}}\nonumber\\
			&\quad\quad\times\left[ \int_0^\theta \tau^{\frac{1-p}{p}} \iint_{A_{j,\tau}} \frac{(u(x, \tau) + \kappa_j)^{\frac{(q+1)(p-1)}{p}}}{|x - y|^{N+sp-p}} \xi_j^p(x) \, dxdyd\tau \right]^{\frac{1}{p}}. 
		\end{align}
		We now apply Lemma~\ref{lem-4-1} with $\rho=\rho_{j+1}$, $\sigma=\rho_j/\rho_{j+1}$,
		$\sigma'=\widehat\rho_j/\rho_{j+1}$ and $\xi=\xi_j$ to obtain
		\begin{align}
			\label{4.18}
			&\int_0^\theta \tau^{\frac{1}{p}} \iint_{A_{j,\tau}} \frac{(u(x, \tau) - u(y, \tau))^p}{|x - y|^{N+sp}} (u(x, \tau) + \kappa_j)^{-\frac{q+1}{p}} \xi_j^p(x) \, dxdyd\tau\nonumber\\
			\leq &\gamma b^j \rho^s \left( \frac{\theta}{\rho^{\lambda}} \right)^{\frac{1}{p}}\left[ S_{j+1} +\rho^N\left(\frac{\theta}{\rho^{sp}}\right)^{\frac{q}{q+1-p}}+\rho^N\mathcal{T}_-\left(u;x_0,\frac{\rho}{2};0,\theta\right) \right]^{\frac{(q+1)(p-1)}{pq}},
		\end{align}
		where $b>1$ is a constant depending only on $N,p,s,q$.  In addition, we have by H\"{o}lder's inequality that
		\begin{align}
			\label{4.19}
			&\int_0^\theta \tau^{\frac{1-p}{p}} \iint_{A_{j,\tau}} \frac{(u(x, \tau) + \kappa_j)^{\frac{(q+1)(p-1)}{p}}}{|x - y|^{N+sp-p}} \xi_j^p(x) \, dxdyd\tau\nonumber\\
			\leq& \gamma \theta^{\frac{1}{p}} \rho^{\frac{N(q+1-p)}{pq}+p-sp} \left[ \sup_{0 < \tau < \theta} \int_{K_{j+1}} u^q(x, \tau) dx + \kappa_j^q \rho_{j+1}^N \right]^{\frac{(q+1)(p-1)}{pq}}\nonumber\\
			\leq &\gamma \rho^{p+s-sp} \left( \frac{\theta}{\rho^{\lambda}} \right)^{\frac{1}{p}} \left[ S_{j+1} +\rho^N\left(\frac{\theta}{\rho^{sp}}\right)^{\frac{q}{q+1-p}}+\rho^N\mathcal{T}_-\left(u;x_0,\frac{\rho}{2};0,\theta\right) \right]^{\frac{(q+1)(p-1)}{pq}}.
		\end{align}
		Substituting \eqref{4.18} and \eqref{4.19} into \eqref{4.17} leads to
		\begin{align*}
			J_2\leq \gamma b^j \left( \frac{\theta}{\rho^{\lambda}} \right)^{\frac{1}{p}}\left[ S_{j+1} +\rho^N\left(\frac{\theta}{\rho^{sp}}\right)^{\frac{q}{q+1-p}}+\rho^N\mathcal{T}_-\left(u;x_0,\frac{\rho}{2};0,\theta\right) \right]^{\frac{(q+1)(p-1)}{pq}} .
		\end{align*}	
		Applying Young's inequality, we have 
		\begin{align*}
			J_2 \leq \varepsilon S_{j+1}+\gamma(\varepsilon)b^j\rho^N\left[\left(\frac{\theta}{\rho^{sp}}\right)^{\frac{q}{q+1-p}}+\mathcal{T}_-\left(u;x_0,\frac{\rho}{2};0,\theta\right)\right].
		\end{align*}   
		
		\textbf{Estimate of $J_3$:} Note that 
		\begin{align*}
			\frac{|y-x_0|}{|y-x|}\leq  1+\frac{|x-x_0|}{|y-x|}\leq \gamma 2^j
		\end{align*}
		for all $x \in \widetilde{K}_j $, $y \in \mathbb{R}^N\backslash\widehat{K}_j$. Thus we apply H\"{o}lder's inequality and Young's inequality to obtain
		\begin{align*}
			J_3\leq& \frac{\gamma   }{\rho^{sp}} \theta 2^{(N+sp)j}\left[ \sup_{0<\tau<\theta} \int_{\widetilde{K}_j} u^{p-1}(x, \tau) \,dx\right] \\
			\leq&\frac{\gamma b^j \theta}{\rho^{sp}} \left[ \sup_{0<\tau<\theta} \int_{K_{j+1}} u^q(x,\tau)\,dx \right]^{\frac{p-1}{q}} |K_{j+1}|^{\frac{q+1-p}{q}}\\
			\leq &\frac{\gamma b^j \theta}{\rho^{\lambda}} S_{j+1}^{\frac{p-1}{q}} \leq \varepsilon S_{j+1} + \gamma(\varepsilon) b^j \left( \frac{\theta}{\rho^{\lambda}} \right)^{\frac{q}{q+1-p}},
		\end{align*}
		where $b>1$ depends only on the \textit{data}.
		
		\textbf{Estimate of $J_4$:} We deduce that
		\begin{align*}
			J_4 &\leq \gamma \int_0^\theta \int_{\widetilde{K}_j } \int_{\mathbb{R}^N\backslash\widehat{K}_j} \frac{u_-^{p-1}(y, \tau)}{|x - y|^{N+sp}} \,dydx d\tau\\
			&\leq \gamma b^j \rho^N\mathcal{T}_{-}(u;x_0,\frac{\rho}{2};0,\theta),
		\end{align*}
		with $b>1$ depending only on the \textit{data}.
		
		We next consider the case $t_1>t_2$.
		Testing \eqref{1.1} with $-\xi_j^{p+1}$ over $(t_2,t_1)$,
		we obtain the same decomposition as in \eqref{4.16}, with the
		sign of the nonlocal interaction reversed. For the interior interaction, set
		$$B_{j,\tau}:=\left\{(x,y)\in\widehat K_j\times\widehat K_j: u(x,\tau)>u(y,\tau),\ \xi_j(x)<\xi_j(y)
		\right\}.$$
		By symmetry, it holds that
		\begin{align*}
			\widetilde J_2 \leq \gamma \int_0^\theta \iint_{B_{j,\tau}}\frac{(u(x,\tau)-u(y,\tau))^{p-1}}
			{|x-y|^{N+sp}}(\xi_j^{p+1}(y)-\xi_j^{p+1}(x))\,dxdyd\tau.
		\end{align*}
		Using
		$$\xi_j^{p+1}(y)-\xi_j^{p+1}(x)\leq\gamma|\xi_j(y)-\xi_j(x)|\xi_j^p(y),$$
		the estimate over $B_{j,\tau}$ provided by Lemma \ref{lem-4-1}, together with the argument in \eqref{4.17}--\eqref{4.19}, we obtain
		\begin{align*}
			\widetilde J_2\leq{}\varepsilon S_{j+1}+\gamma(\varepsilon)b^j\rho^N\left[\left(\frac{\theta}{\rho^{sp}}\right)^{\frac{q}{q+1-p}}+\mathcal{T}_-\left(u;x_0,\frac{\rho}{2};0,\theta\right)\right].
		\end{align*}
		The local part of the exterior interaction is estimated exactly as
		$J_3$. In addition, we have
		\begin{align*}
			\widetilde J_4&\leq\gamma \int_0^\theta
			\int_{\widetilde K_j}
			\int_{\mathbb R^N\setminus\widehat K_j}
			\frac{u_+^{p-1}(y,\tau)}{|x-y|^{N+sp}}\,dydxd\tau\\
			&\leq
			\gamma b^j\rho^N\mathcal{T}_+\left(u;x_0,\frac{\rho}{2};0,\theta\right).
		\end{align*}
		
		Combining the estimates in both cases $t_1\leq t_2$ and $t_1>t_2$,  there holds
		\begin{align*}
			S_j\leq{}&A+\varepsilon S_{j+1}+\gamma(\varepsilon)b^j\rho^N
			\left[\left(\frac{\theta}{\rho^{sp}}\right)^{\frac{q}{q+1-p}}+\mathcal T_++\mathcal T_-\right]\\
			\leq{}&\varepsilon S_{j+1}+\gamma(\varepsilon)b^j\left\{A+\rho^N\left[
			\left(\frac{\theta}{\rho^{sp}}\right)^{\frac{q}{q+1-p}}+\mathcal T_++\mathcal T_-\right]\right\},
		\end{align*}
		where
		\begin{align*}
			\mathcal T_\pm:=\mathcal T_\pm\left(u;x_0,\frac{\rho}{2};0,\theta\right).
		\end{align*}
		Iterating the above inequality gives that
		\begin{align*}
			S_0\leq{}&\varepsilon^j S_j+\gamma(\varepsilon)\sum_{i=0}^{j-1}(\varepsilon b)^i
			\left\{A+\rho^N\left[\left(\frac{\theta}{\rho^{sp}}\right)^{\frac{q}{q+1-p}}+\mathcal T_+
			+\mathcal T_-\right]\right\}.
		\end{align*}
		Then, by letting $\varepsilon=1/(2b)$, we get $\sum_{i=0}^{j-1}(\varepsilon b)^i\leq 2$, and meanwhile $\varepsilon^j S_j \rightarrow 0$ as $j \rightarrow \infty$. The proof is completed by recalling the definitions of $S_j$ and $A$.
	\end{proof}
	
	We are now ready to prove Theorem \ref{thm-1-7}.
\begin{proof}[Proof of Theorem \ref{thm-1-7}]
	By translation, assume that $(x_0,t_0)=(0,0)$. We first show that
	\begin{align}
		\label{4.20}
		\operatorname*{ess\,sup}_{Q_{\rho/2,\theta/2}}u
		&\leq\gamma\left(\frac{\rho^{sp}}{\theta}\right)^{\frac{N}{\lambda_q}}
		\left(
		\mint\!\!\mint_{Q_{\rho,\theta}}u^q\,dx\,d\tau\right)^{\frac{sp}{\lambda_q}}
		\nonumber\\
		&\quad+\gamma\left(\frac{\theta}{\rho^{sp}}\right)^{\frac{1}{q+1-p}}
		+\gamma\mathcal T_+(u;0,\rho/2;-\theta,0)^{\frac1q}.
	\end{align}
	For $q\geq1$, this follows directly from
	Theorem~\ref{thm-1-3} with $r=q$.
	To include $0<q<1$, we consider the interpolation argument. Set
	\begin{align*}
		M_{s_i}:=\operatorname*{ess\,sup}_{Q_{s_i\rho,s_i\theta}}u,
		\qquad \frac12\leq s_1<s_2\leq\frac34.
	\end{align*}
	Since $\lambda_{q+1}=\lambda_q+sp>sp$, the estimate
	\eqref{3.22} from the proof of
	Theorem~\ref{thm-1-3} can be applied with
	$r=q+1$.
	Apply it in $Q_{s_2\rho,s_2\theta}$ with parameter
	$s_1/s_2$. Using $u^{q+1}\leq M_{s_2}u^q$ in that cylinder gives
	\begin{align*}
		M_{s_1}
		&\leq\frac{\gamma}{(s_2-s_1)^\kappa}
		\left(\frac{\rho^{sp}}{\theta}\right)^{\frac{N}{\lambda_{q+1}}}
		M_{s_2}^{\frac{sp}{\lambda_{q+1}}}
		\left(\mint\!\!\mint_{Q_{\rho,\theta}}u^q\,dxd\tau\right)^{\frac{sp}{\lambda_{q+1}}}\\
		&\quad+\frac{\gamma}{(s_2-s_1)^\kappa}
		\left[\left(\frac{\theta}{\rho^{sp}}\right)^{\frac{1}{q+1-p}}+\left(\frac{\theta}{\rho^{sp}}\right)^{\frac1q}M_{s_2}^{\frac{p-1}{q}}+M_{s_2}^{1-q}\mathcal T_+(u;0,\rho/2;-\theta,0)\right],
	\end{align*}
	where $\kappa$ depends only on the \textit{data}.  Employing Young's inequality with 
	$(s_2-s_1)^{-\kappa}$ leads to
	\begin{align*}
		M_{s_1}\leq&\frac12M_{s_2}
		+\frac{\gamma}{(s_2-s_1)^{\kappa_1}}
		\bigg[\left(\frac{\rho^{sp}}{\theta}\right)^{\frac{N}{\lambda_q}}
		\left(\mint\!\!\mint_{Q_{\rho,\theta}}u^q\,dxd\tau\right)^{\frac{sp}{\lambda_q}}\\&
		+\left(\frac{\theta}{\rho^{sp}}\right)^{\frac{1}{q+1-p}}
		+\mathcal T_+^{\frac1q}(u;0,\rho/2;-\theta,0)\bigg].
	\end{align*}
	The iteration on $[1/2,3/4]$, together with the local boundedness of $u$ proves
	\eqref{4.20} also for $0<q<1$.
	
	By Theorem~\ref{thm-1-6}, we obtain
	\begin{align*}
		\mint\!\!\mint_{Q_{\rho,\theta}}u^q\,dx\,d\tau
		&\leq\frac{\gamma}{\rho^N}\operatorname*{inf}_{-\theta<\tau<0}
		\int_{K_{2\rho}}u^q(x,\tau)\,dx\\
		&\quad+\gamma\left(\frac{\theta}{\rho^{sp}}\right)^{\frac{q}{q+1-p}}
		+\gamma\mathcal T_+(u;0,\rho/2;-\theta,0)
		+\gamma\mathcal T_-(u;0,\rho/2;-\theta,0).
	\end{align*}
	Substituting this bound into \eqref{4.20} and using $\lambda_q=spq-N(q+1-p)$ gives
	\begin{align}
		\label{integrated-tail-intermediate}
		\operatorname*{ess\,sup}_{Q_{\rho/2,\theta/2}}u
		&\leq\gamma\theta^{-\frac{N}{\lambda_q}}
		\left[\operatorname*{inf}_{-\theta<\tau<0}
		\int_{K_{2\rho}}u^q(x,\tau)\,dx\right]^{\frac{sp}{\lambda_q}}+\gamma\left(\frac{\theta}{\rho^{sp}}\right)^{\frac{1}{q+1-p}}
		\nonumber\\
		&\quad+\gamma\left(\frac{\rho^{sp}}{\theta}\right)^{\frac{N}{\lambda_q}}
		\mathcal T_+(u;0,\rho/2;-\theta,0)^{\frac{sp}{\lambda_q}}
		\nonumber\\
		&\quad+\gamma\left(\frac{\rho^{sp}}{\theta}\right)^{\frac{N}{\lambda_q}}
		\mathcal T_-(u;0,\rho/2;-\theta,0)^{\frac{sp}{\lambda_q}}
		+\gamma\mathcal T_+(u;0,\rho/2;-\theta,0)^{\frac1q}.
	\end{align}
	Since $sp/\lambda_q>1/q$, we get the desired result.
\end{proof}

\section{Extinction time and decay rates}
\label{sec5}
The first part of this section concerns the homogeneous Cauchy--Dirichlet problem \eqref{1.3}.
We estimate the finite extinction time in terms of the initial datum and volume of the domain when necessary. We begin with the following auxiliary lemma.

\begin{lemma}
	\label{lem-5-1}
	Let $0<p-1<q$, $sp<N$, and $\lambda_{q+1}=N(p-q-1)+sp(q+1)$. Let $E$ be a bounded open set. Assume that the initial datum $u_0$ is nonnegative and satisfies $u_0\in W_{0}^{s,p}(E)\cap L^\infty(\mathbb{R}^N)$. Let $u$ be a nonnegative, bounded weak solution to \eqref{1.3}. Define 
	\begin{align*}
	\alpha=\frac{N(q+1-p)}{sp},\quad\text{ and   }  \quad p_\alpha=\frac{N(q+1)}{N+s(q+1)}.\end{align*}
	Then the following estimates hold:
\begin{itemize}
		\item[(i)] if $1 < p < p_\alpha$, then there exists a constant $\gamma(N,p,s,q,\Lambda)>0$ such that 
		\begin{align}
			\label{5.1}
		\int_0^T \left[ - \|u(\cdot, \tau)\|_{L^\alpha(E)}^\alpha \psi'(\tau) + \frac{1}{\gamma}\|u(\cdot, \tau)\|_{L^{\alpha}(E)}^{p+\alpha-q-1} \psi(\tau) \right] d\tau \leq 0,
		\end{align}
		for all $\psi\in W_0^{1,q+1}(0,T)$ satisfying $\psi\geq0$ in $(0,T)$;
		\item[(ii)] if $p_\alpha\leq p$, then there exists a constant $\gamma (N,p,s,q,\Lambda)> 0$ such that 
		\begin{align}
			\label{5.2}
		\int_0^T \left[ - \|u(\cdot, \tau)\|_{L^{q+1}(E)}^{q+1}\psi'(\tau) + \frac{1}{\gamma}|E|^{-\frac{\lambda_{q+1}}{(q+1)N}} \|u(\cdot, \tau)\|_{L^{q+1}(E)}^p \psi(\tau) \right] d\tau \leq 0,
		\end{align}
		for all $\psi \in W_0^{1,q+1}(0,T)$ satisfying $\psi \geq 0$ in $(0,T)$.
	\end{itemize}
\end{lemma}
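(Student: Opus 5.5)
The plan is to derive both inequalities by testing the weak formulation of \eqref{1.3} with a power of $u$ times a time cutoff $\psi$. The time term then becomes the derivative of an $L^\alpha$ (resp. $L^{q+1}$) norm. The nonlocal term is bounded below by a Gagliardo seminorm of a power of $u$, and the fractional Sobolev inequality \cite[Theorem 6.7]{DPV12} in $W_0^{s,p}(E)$ (recall $sp<N$) turns that seminorm into a power of the same norm.

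\emph{Case (i).} I will first check that $1<p<p_\alpha$ is equivalent to $\alpha>q+1$: indeed $\alpha>q+1$ iff $N(q+1)-sp(q+1)>Np$ iff $p<p_\alpha$. Hence $\beta:=\alpha-q>1$, and I test with $\varphi=u^{\beta}\psi$. This is admissible because $u$ is bounded and nonnegative and $t\mapsto t^\beta$ is locally Lipschitz on $[0,\infty)$, so $u^\beta\in L^p(0,T;W_0^{s,p}(E))$.

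For the time term I formally have $\partial_\tau(u^q)u^{\beta}=\frac{q}{\alpha}\partial_\tau(u^{\alpha})$. After integrating by parts against $\psi$, with no endpoint contributions since $\psi\in W^{1,q+1}_0(0,T)$, this yields $-\frac q\alpha\int_0^T\|u\|_{L^\alpha}^\alpha\psi'\,d\tau$.

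For the nonlocal term I use the first inequality of Lemma \ref{lem-2-7} with exponent $\beta$:
\begin{align*}
|u(x)-u(y)|^{p-2}(u(x)-u(y))(u^\beta(x)-u^\beta(y))\geq \gamma^{-1}\big|w(x)-w(y)\big|^p,\qquad w:=u^{\frac{p+\beta-1}{p}}.
\end{align*}
Together with the lower bound in \eqref{kernel} and the fractional Sobolev inequality, this gives $\mathcal E(u,u^\beta,\tau)\geq\gamma^{-1}\|w(\cdot,\tau)\|_{L^{p^*}}^p$ with $p^*=Np/(N-sp)$. A direct computation shows $\frac{(p+\beta-1)N}{N-sp}=\alpha$, since this is equivalent to $p-q-1=-\alpha sp/N$. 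Consequently
\begin{align*}
\|w\|_{L^{p^*}}^p=\|u\|_{L^\alpha}^{\alpha\frac{N-sp}{N}}=\|u\|_{L^\alpha}^{p+\alpha-q-1}.
\end{align*}
Multiplying through by $\alpha/q$ then yields \eqref{5.1}.

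\emph{Case (ii).} Here $p\ge p_\alpha$ is equivalent to $p^*\geq q+1$. I test with $\varphi=u\psi$. The time term gives $-\frac{q}{q+1}\int_0^T\|u\|_{L^{q+1}}^{q+1}\psi'\,d\tau$, while the energy term is bounded below by $\gamma^{-1}[u]_{W^{s,p}}^p\geq\gamma^{-1}\|u\|_{L^{p^*}}^p$. Hölder's inequality on the bounded set $E$ gives $\|u\|_{L^{q+1}}\le|E|^{\frac1{q+1}-\frac1{p^*}}\|u\|_{L^{p^*}}$. Raising this to the power $p$ produces the exponent
\begin{align*}
p\Big(\frac1{q+1}-\frac{N-sp}{Np}\Big)=\frac{\lambda_{q+1}}{N(q+1)},
\end{align*}
which is exactly the volume factor in \eqref{5.2}.

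The step I expect to be the main obstacle is justifying the time term rigorously, because $u$ has no time derivative. I would follow the scheme of Lemma \ref{lem-4-1}: replace $u^\beta$ (resp. $u$) in the test function by $[\![u]\!]_{\bar h}^\beta$ (resp. $[\![u]\!]_{\bar h}$). I then split $u^q=[\![u]\!]_{\bar h}^q+(u^q-[\![u]\!]_{\bar h}^q)$ and use Lemma \ref{lem-2-8}(ii). The commutator term has a favourable sign by monotonicity of $t\mapsto t^q$ and $t\mapsto t^\beta$. The remaining term is an exact derivative of $\int_0^{[\![u]\!]_{\bar h}}\beta s^{q-1}s^{\beta}\,ds$-type primitives. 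Finally I pass to the limit $h\to0$ using Lemma \ref{lem-2-8}(i) and the boundedness of $u$, both in the time term and in the nonlocal term.
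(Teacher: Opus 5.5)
Your proposal is correct and follows essentially the same route as the paper: the test function $[\![u]\!]_{\bar h}^{\alpha-q}\psi$ (resp.\ $[\![u]\!]_{\bar h}\psi$), discarding the nonnegative commutator term, and then Lemma~\ref{lem-2-7}, the fractional Sobolev inequality and H\"older's inequality on $E$, with the same exponent bookkeeping. Two small slips: the primitive in the time term should be $\int_0^{[\![u]\!]_{\bar h}} q\, s^{q-1}s^{\beta}\,ds=\frac{q}{\alpha}[\![u]\!]_{\bar h}^{\alpha}$ (the factor is $q$, not $\beta$), and the Sobolev inequality you need is the homogeneous one on $\mathbb{R}^N$ for functions vanishing outside $E$ (the paper cites \cite[Theorem~6.5]{DPV12}), since an inhomogeneous version carrying an extra $L^p$ term would spoil the exact power of the norm.
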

\begin{proof}
	We first prove (i). Since $1<p<p_\alpha$, we have
	$\alpha>q+1$. Taking
	\begin{align*}
		\varphi_h(x,\tau)=[\![u]\!]_{\bar{h}}^{\alpha-q}(x,\tau)\psi(\tau)
		\end{align*}
	as a test function in the weak formulation of \eqref{1.3} and using the fact that
	\(\psi\in W^{1,q+1}_0(0,T)\), we obtain
	\begin{align}
		\label{5.3}
		0 = &- \int_0^T \int_E u^q(x, \tau) \frac{\partial \varphi_h}{\partial \tau}(x, \tau)\, dx d\tau\nonumber \\
		&\quad +\frac12 \int_0^T \int_{\mathbb{R}^N }\int_{\mathbb{R}^N} U(x,y,\tau) (\varphi_h(x, \tau) - \varphi_h(y, \tau)) K(x, y, \tau) dx dy d\tau.
		\end{align}
		We estimate the first term as 
		\begin{align*}
			&\quad- \int_0^{T} \int_{E}  u^q(x, \tau) \frac{\partial \varphi_h}{\partial \tau}(x, \tau) \, dx d\tau\\
			&=-\int_0^{T} \int_{E} [\![u]\!]_{\bar{h}}^q(x, \tau) \frac{\partial \varphi_h}{\partial \tau}(x, \tau) \, dxd\tau + \int_0^{T} \int_{E}  ([\![u]\!]_{\bar{h}}^q(x, \tau) - u^q(x, \tau)) \frac{\partial \varphi_h}{\partial \tau}(x, \tau) \, dxd\tau\\
			&=:G_1+G_2.
		\end{align*}
	By integration by parts, we have
		\begin{align*}
		G_1=& \frac{q}{\alpha} \int_0^{T}\int_{E} \psi(\tau)
		\partial_\tau [\![u]\!]_{\bar{h}}^\alpha(x,\tau)\,dxd\tau\\
		=& -\frac{q}{\alpha}\int_0^{T}\int_{E} \psi'(\tau)
		[\![u]\!]_{\bar{h}}^\alpha(x,\tau)\,dxd\tau.
	\end{align*}
	Then it follows from Lemma~\ref{lem-2-8}(i) and the boundedness of $u$ that
	\begin{align*}
		\lim_{h\to 0} G_1=-\frac{q}{\alpha}\int_0^{T}\int_{E} \psi'(\tau)
		u^\alpha(x,\tau)\,dxd\tau.
		\end{align*}
Using Lemma~\ref{lem-2-8}(ii), we compute
	\begin{align*}
			G_2 & =(\alpha-q) \int_0^T \int_{E}\left([\![u]\!]_{\bar{h}}^q(x, \tau)-u^q(x, \tau)\right) [\![u]\!]_{\bar{h}}^{\alpha-q-1}(x, \tau) \frac{\partial [\![u]\!]_{\bar{h}}}{\partial \tau}(x, \tau) \psi(\tau) \,d x d \tau \\
			& \quad+\int_0^T \int_{E}\left([\![u]\!]_{\bar{h}}^q(x, \tau)-u^q(x, \tau)\right)[\![u]\!]_{\bar{h}}^{\alpha-q}(x, \tau) \psi^{\prime}(\tau) \,d x d \tau \\
			& =(\alpha-q) \int_0^T \int_{E} \frac{\left([\![u]\!]_{\bar{h}}^q(x, \tau)-u^q(x, \tau)\right)\left([\![u]\!]_{\bar{h}}(x, \tau)-u(x, \tau)\right) }{h}[\![u]\!]_{\bar{h}}^{\alpha-q-1}(x, \tau)\psi(\tau)\, d x d \tau \\
			& \quad+\int_0^T \int_{E} [\![u]\!]_{\bar{h}}^{\alpha}(x, \tau) \psi^{\prime}(\tau) -u^q(x, \tau)[\![u]\!]_{\bar{h}}^{\alpha-q}(x, \tau) \psi^{\prime}(\tau)\,d x d \tau.
		\end{align*}
	Note that the first term in the above estimate is nonnegative, while the second tends to zero as $h\to0$. Thus,
	\begin{align*}
	\liminf_{h \to 0} G_2\geq 0. 
	\end{align*}
	Consequently,
	\begin{align}
		\label{5.4}
			\liminf_{h \to 0}-  \int_0^T \int_E u^q(x, \tau) \frac{\partial \varphi_h}{\partial \tau}(x, \tau)\, dx d\tau \geq -\frac{q}{\alpha} \int_0^T \|u(\cdot, \tau)\|_{L^\alpha(E)}^\alpha \psi'(\tau) \, d\tau.
	\end{align}
To pass to the limit in the nonlocal term, we use the strong convergence of the time averages in
$L^p(0,T;W_0^{s,p}(E))$ and their uniform boundedness. Since $\alpha-q>1$, the continuity of the map $r\mapsto r^{\alpha-q}$ yields
\begin{align*}
	\varphi_h\longrightarrow u^{\alpha-q}\psi
	\quad\text{strongly in }
	L^p(0,T;W_0^{s,p}(E)).
	\end{align*}
	By \eqref{kernel} and H\"older's inequality, we get
	\begin{align*}
	\lim_{h\to0}\int_0^T\mathcal E(u,\varphi_h,\tau)\,d\tau=
	\int_0^T\mathcal E(u,u^{\alpha-q}\psi,\tau)\,d\tau.
	\end{align*}
	Moreover,
	\begin{align*}
	U(x,y,\tau)
	\bigl(u^{\alpha-q}(x,\tau)-u^{\alpha-q}(y,\tau)\bigr)\geq0.
	\end{align*}
An application of the lower bound in \eqref{kernel}, Lemma~\ref{lem-2-7}, and the fractional Sobolev inequality \cite[Theorem~6.5]{DPV12} leads to
	\begin{align}
		\label{5.5}
		&\liminf_{h \to 0}  \frac12 \int_0^T\int_{\mathbb{R}^N }\int_{\mathbb{R}^N} U(x,y,\tau) (\varphi_h(x, \tau) - \varphi_h(y, \tau)) K(x, y, \tau) \,dx dy d\tau\nonumber \\
		\geq &\frac{1}{\gamma}\int_0^T \int_{\mathbb{R}^N }\int_{\mathbb{R}^N}  \frac{U(x,y,\tau)}{|x - y|^{N+ps}} (u^{\alpha-q}(x, \tau) - u^{\alpha-q}(y, \tau)) \psi(\tau) \, dx dy d\tau \nonumber\\
		\geq &\frac{1}{\gamma} \int_0^T \int_{\mathbb{R}^N }\int_{\mathbb{R}^N}  \left| u^{\frac{p+\alpha-q-1}{p}}(x, \tau) - u^{\frac{p+\alpha-q-1}{p}}(y, \tau) \right|^p \frac{\psi(\tau)}{|x - y|^{N+ps}}\,dxdy d\tau \nonumber\\
		\geq &\frac{1}{\gamma}\int_0^T \left[ \int_{E} u^{\frac{p+\alpha-q-1}{p} p_s^*}(x, \tau) \, dx \right]^{\frac{p}{p_s^*}} \psi(\tau) \, d\tau\nonumber\\
		= &\frac{1}{\gamma} \int_0^T \|u(\cdot, \tau)\|_{L^\alpha(E)}^{p+\alpha-q-1} \psi(\tau) \, d\tau,
		\end{align}
	where $p_{s}^*=Np/(N-sp)$, and we have employed the relation
	\begin{align*}
	\left( \frac{p+\alpha-q-1}{p} \right) p_s^* = \alpha.
	\end{align*}
   Combining \eqref{5.3}--\eqref{5.5}, we arrive at
   \begin{align*}
   	0 \geq -\frac{q}{\alpha} \int_0^T \|u(\cdot, \tau)\|_{L^\alpha(E)}^\alpha \psi'(\tau) \, d\tau + \frac{1}{\gamma} \int_0^T \|u(\cdot, \tau)\|_{L^\alpha(E)}^{p+\alpha-q-1} \psi(\tau) \, d\tau.
   	\end{align*}
   	This completes the proof of (i). 
   	
   	For (ii), use the same argument with the test exponent $\alpha-q$ replaced by $1$. Since $p\geq p_\alpha$ is equivalent to $q+1\leq p_s^*$, H\"older's inequality on the set $E$ gives
\begin{align*}
   		\|u\|_{L^{p_s^*}(E)}^p
   		&\geq |E|^{p/p_s^*-p/(q+1)}\|u\|_{L^{q+1}(E)}^p\\
   		&=|E|^{-\lambda_{q+1}/[N(q+1)]}\|u\|_{L^{q+1}(E)}^p.
   	\end{align*}
   Combining this with the fractional Sobolev inequality gives \eqref{5.2}.
\end{proof}

Next, we give the proof of Theorem \ref{thm-1-8}.

\begin{proof}[Proof of Theorem \ref{thm-1-8}]
	We first consider (i). Set
	\begin{align*}
	U(t):=\|u(\cdot,t)\|_{L^\alpha(E)}^\alpha.
	\end{align*}
	Since $u$ is bounded and continuous in $L^{q+1}(E)$, we have $U\in C([0,T])$. Then it follows from Lemma \ref{lem-5-1} that
	\begin{equation}
		\label{5.6}
		\begin{cases}
			U'(t)+\dfrac1\gamma
			U(t)^{\frac{p+\alpha-q-1}{\alpha}}\leq0
			&\text{in }(0,T),\\[4pt]
			U(0)=\|u_0\|_{L^\alpha(E)}^\alpha,
		\end{cases}
	\end{equation}
	where the differential inequality is understood in the
	distributional sense. Since $U$ is continuous, \eqref{5.6} implies that $U$ is
	nonincreasing. If $U(0)=0$, then $U\equiv0$.
	Otherwise, fix $t\in(0,T)$ such that $U(t)>0$. Then
	\begin{align*}
	U(\tau)\geq U(t)>0\qquad\text{for }0\leq\tau\leq t.
	\end{align*}
	Applying time regularization to \eqref{5.6} on compact subintervals of $(0,t)$ and then using the chain rule and passing to the limit, we obtain
	\begin{align*}
	U^{\frac{q+1-p}{\alpha}}(t)-U^{\frac{q+1-p}{\alpha}}(0)
	\leq-\frac{q+1-p}{\alpha\gamma}\,t.
	\end{align*}
	By monotonicity, $U$ remains zero once it reaches zero. Thus one can get
	\begin{align*}
	\|u(\cdot,t)\|_{L^\alpha(E)}^{q+1-p}
	\leq\left[\|u_0\|_{L^\alpha(E)}^{q+1-p}-\frac{t}{\gamma}\right]_+,
	\qquad 0\leq t\leq T,
	\end{align*}
	where $\gamma>0$ depends only on the \textit{data}. Therefore, $u(\cdot, t)=0$ almost everywhere in $E$ whenever
	$$t \geq \gamma \|u_0\|_{L^\alpha(E)}^{q+1-p}.$$
Since $T>0$ is arbitrary and $u$ vanishes outside $E$, assertion (i) follows.

 For the case (ii), set
 \begin{align*}
 U(t):=\|u(\cdot,t)\|_{L^{q+1}(E)}^{q+1}.
 \end{align*}
 Then $U\in C([0,T])$, and Lemma~\ref{lem-5-1}(ii) gives
 \begin{equation}
 	\label{5.7}
 	\begin{cases}
 		U'(t)+\dfrac1\gamma
 		|E|^{-\frac{\lambda_{q+1}}{N(q+1)}}
 		U(t)^{\frac{p}{q+1}}\leq0
 		&\text{in }(0,T),\\[4pt]
 		U(0)=\|u_0\|_{L^{q+1}(E)}^{q+1},
 	\end{cases}
 \end{equation}
 where the differential inequality is understood in the
 distributional sense. Since $p<q+1$, the same argument as in (i) yields
 \begin{align*}
 \|u(\cdot,t)\|_{L^{q+1}(E)}^{q+1-p}\leq
 \left[ \|u_0\|_{L^{q+1}(E)}^{q+1-p}-\frac{t}{\gamma} |E|^{-\frac{\lambda_{q+1}}{N(q+1)}}
 \right]_+, \qquad 0\leq t\leq T.
 \end{align*}
 Since $T>0$ is arbitrary and $u$ vanishes outside $E$, assertion (ii) follows.
\end{proof}	

Finally, we derive upper bounds describing the decay of $u$ as $t$ approaches the extinction time $T_*$.
\begin{proof}[Proof of Theorem \ref{thm-1-9}]
We first prove (i). Fix $0<t<\tau<T_*$. Applying the one-sided integral Harnack inequality
	\eqref{1.9} and using the global nonnegativity of $u$, we can deduce 
	\begin{align*}
		\sup_{t<s<\tau}\int_{K_\rho(x_0)}u^q(x,s)\,dx
		\leq
		\gamma\int_{K_{2\rho}(x_0)}u^q(x,\tau)\,dx+\gamma\rho^N
		\left(\frac{\tau-t}{\rho^{sp}}\right)^{\frac{q}{q+1-p}}.
	\end{align*}
	By the local $L^{q+1}$ time continuity of $u$ and $u(\cdot,T_*)=0$, we have
	\begin{align*}
		\lim_{\tau\rightarrow T_*}
		\int_{K_{2\rho}(x_0)}u^q(x,\tau)\,dx=0.
	\end{align*}
	Consequently, we let $\tau\rightarrow T_*$ to obtain 
		\begin{equation}\label{eq:extinction-mass-uniform}
		\sup_{t<s<T_*}
		\int_{K_\rho(x_0)}u^q(x,s)\,dx
		\leq
		\gamma\rho^N
		\left(\frac{T_*-t}{\rho^{sp}}\right)^
		{\frac{q}{q+1-p}}.
	\end{equation}
	By time continuity, the same bound holds at $s=t$.
	
	Next, we estimate the tail of the positive part of $u$. For the fixed $t\in(0,T_*)$, set
	\begin{align*}
		\kappa:=\left(\frac{T_*-t}{\rho^{sp}}\right)^{\frac{1}{q+1-p}}.
	\end{align*}
	Choose $\eta\in C_c^\infty(K_{\rho/4}(x_0))$
	such that
	\begin{align*}
		0\leq\eta\leq1,\qquad
		\eta=1\ \text{in }K_{\rho/8}(x_0),
		\qquad
		|D\eta|\leq\frac{\gamma}{\rho}.
	\end{align*}
	Since $u(\cdot,T_*)=0$, testing the equation with $\eta^{p+1}$ on $(t,T_*)$ yields
	\begin{equation}\label{eq:extinction-flux-balance}
		\int_t^{T_*}G(\tau)\,d\tau
		=\int_{K_{\rho/2}(x_0)}
		u^q(x,t)\eta(x)^{p+1}\,dx-\int_t^{T_*}F(\tau)\,d\tau,
	\end{equation}
	where
		\begin{align*}
		F(\tau)
		&:=\frac12\iint_{K_{\rho/2}(x_0)\times K_{\rho/2}(x_0)}K(x,y,\tau)|u(x,\tau)-u(y,\tau)|^{p-2}\\
		&\qquad\qquad\qquad\qquad\qquad\quad\times
		(u(x,\tau)-u(y,\tau))\bigl(\eta(x)^{p+1}-\eta(y)^{p+1}\bigr)\,dxdy,
	\end{align*}
	\begin{align*}
		G(\tau)
		&:=\int_{K_{\rho/2}(x_0)}\int_{\mathbb R^N\setminus K_{\rho/2}(x_0)}K(x,y,\tau)
		|u(x,\tau)-u(y,\tau)|^{p-2}\\
		&\qquad\qquad\qquad\qquad\qquad\quad\times
		(u(x,\tau)-u(y,\tau))\eta(x)^{p+1}\,dydx.
	\end{align*}

	We first estimate the term $F(\tau)$.
	Define
	\begin{align*}
		A_\tau&:=
		\bigl\{(x,y)\in K_{\rho/2}(x_0)\times K_{\rho/2}(x_0): u(x,\tau)>u(y,\tau),\ \eta(x)>\eta(y)\bigr\},\\
		B_\tau&:=
		\bigl\{(x,y)\in K_{\rho/2}(x_0)\times K_{\rho/2}(x_0): u(x,\tau)>u(y,\tau),\ \eta(x)<\eta(y)\bigr\}.
	\end{align*}
Using symmetry in $x$ and $y$, the kernel bound
\eqref{kernel}, and the estimate
\begin{align*}
	|\eta(x)^{p+1}-\eta(y)^{p+1}|
	\leq\frac{\gamma}{\rho}\max\{\eta(x),\eta(y)\}^p|x-y|,
\end{align*}
we obtain by H\"older's inequality
\begin{align*}
	\int_t^{T_*}|F(\tau)|\,d\tau
	\leq\frac{\gamma}{\rho}\mathcal A^{\frac{p-1}{p}}\mathcal B^{\frac1p},
\end{align*}
	where
	\begin{align*}
		\mathcal A
		&:=
		\int_t^{T_*}(\tau-t)^{\frac1p}\iint_{A_\tau\cup B_\tau}
		\frac{(u(x,\tau)-u(y,\tau))^p\max\{\eta(x),\eta(y)\}^p}{
		|x-y|^{N+sp}(u(x,\tau)+\kappa)^{\frac{q+1}{p}}}\,dxdyd\tau,\\
		\mathcal B
		&:=
		\int_t^{T_*}(\tau-t)^{\frac{1-p}{p}}\iint_{A_\tau\cup B_\tau}
		\frac{(u(x,\tau)+\kappa)^{\frac{(q+1)(p-1)}p}\max\{\eta(x),\eta(y)\}^p}{|x-y|^{N+sp-p}}\,dxdyd\tau.
	\end{align*}
After translating the time interval to $(0,T_*-t)$, we apply Lemma~\ref{lem-4-1} with radius $\rho$,
$\theta=T_*-t$, $\sigma=1/8$, and cutoff $\eta$. Indeed, $\eta=1$ on $K_{\rho/8}(x_0)$,
$\operatorname{supp}\eta\Subset K_{\rho/4}(x_0)
\subset K_{9\rho/16}(x_0)$, and $K_{\rho/2}(x_0)\subset K_{9\rho/16}(x_0)$. Moreover, $\max\{\eta(x),\eta(y)\}$ equals
$\eta(x)$ on $A_\tau$ and $\eta(y)$ on $B_\tau$ as required. Since $u\geq 0$ in $\mathbb{R}^N$, Lemma~\ref{lem-4-1} together with \eqref{eq:extinction-mass-uniform} yields
	\begin{align*}
		\mathcal A
		&\leq
		\gamma\rho^s\left(\frac{T_*-t}{\rho^{sp-\frac{N(q+1-p)}q}}\right)^{\frac1p}
		\left[\sup_{t<\tau<T_*}\int_{K_\rho(x_0)}u^q(x,\tau)\,dx+\rho^N\kappa^q
		\right]^{\frac{(q+1)(p-1)}{pq}}\\
		&\leq
		\gamma\rho^{N+s}\left(\frac{T_*-t}{\rho^{sp}}\right)^{\frac{q}{q+1-p}}.
	\end{align*}
	To estimate $\mathcal B$, we use $0\leq\eta\leq1$, integrate with respect to $y$, and apply
	H\"older's inequality in $x$ to obtain
	\begin{align*}
		\mathcal B
		&\leq
		\gamma\rho^{p-sp}\int_t^{T_*}(\tau-t)^{\frac{1-p}{p}}
		\int_{K_{\rho/2}(x_0)}(u(x,\tau)+\kappa)^{\frac{(q+1)(p-1)}p}\,dxd\tau\\
		&\leq
		\gamma\rho^{p-sp}(T_*-t)^{\frac1p}\rho^{\frac{N(q+1-p)}{pq}}
		\left[\sup_{t<\tau<T_*}\int_{K_\rho(x_0)}u^q(x,\tau)\,dx
		+\rho^N\kappa^q\right]^{\frac{(q+1)(p-1)}{pq}}\\
		&\leq
		\gamma\rho^{N+p+s-sp}\left(\frac{T_*-t}{\rho^{sp}}\right)^{\frac{q}{q+1-p}},
	\end{align*}
	where we used the relation
	\begin{align*}
	(q+1-p)+(q+1)(p-1)=pq.
	\end{align*}
	Combining the bounds for
	$\mathcal A$ and $\mathcal B$, we conclude that
	\begin{equation}\label{eq:extinction-interior-flux}
	\int_t^{T_*}|F(\tau)|\,d\tau
	\leq \gamma\rho^N\left(\frac{T_*-t}{\rho^{sp}}\right)^{\frac{q}{q+1-p}},
	\end{equation}
	where $\gamma$ depends only on the $\textit{data}$.
	
	We next estimate the term $G(\tau)$.
	For $x\in\operatorname{supp}\eta$, we know $\operatorname{dist}(\operatorname{supp}\eta,\,
	\mathbb R^N\setminus K_{\rho/2}(x_0))\geq \gamma\rho$. Moreover, the kernel condition \eqref{kernel} and $\eta=1$ on $K_{\rho/8}(x_0)$
	imply
	\begin{align*}
		\int_{K_{\rho/2}(x_0)}\eta(x)^{p+1}K(x,y,\tau)\,dx
		\geq\frac{\gamma\rho^N}{|y-x_0|^{N+sp}},
	    \qquad y\notin K_{\rho/2}(x_0)
	\end{align*}
	with $\gamma$ depending only on the \textit{data}.
	Therefore, we get
	\begin{align}
		\label{upper bound}
		\int_t^{T_*}G(\tau)\,d\tau
		&\leq
		\gamma\rho^{-sp}\int_t^{T_*}\int_{K_{\rho/2}(x_0)}
		u^{p-1}(x,\tau)\,dx\,d\tau-
		\gamma\rho^N
		\mathcal T_+
		(u;x_0,\rho/2;t,T_*)\nonumber\\
		&\leq
		\gamma\rho^N
		\left(\frac{T_*-t}{\rho^{sp}}\right)^
		{\frac{q}{q+1-p}}-\gamma\rho^N
		\mathcal T_+
		(u;x_0,\rho/2;t,T_*).
	\end{align}
	In the last step, we utilized
	\begin{align*}
		&\rho^{-sp}\int_t^{T_*}\int_{K_{\rho/2}(x_0)}
		u^{p-1}(x,\tau)\,dxd\tau\\
	   \leq &
		\gamma\rho^{-sp}(T_*-t)
		\rho^{N(1-\frac{p-1}{q})}
		\left[
		\sup_{t<\tau<T_*}
		\int_{K_\rho(x_0)}
		u^q(x,\tau)\,dx
		\right]^{\frac{p-1}{q}}\\
		\leq&
		\gamma\rho^N
		\left(\frac{T_*-t}{\rho^{sp}}\right)^
		{\frac{q}{q+1-p}}.
	\end{align*}
	Combining the estimates \eqref{eq:extinction-flux-balance}-\eqref{upper bound},
	we arrive at
	\begin{align*}
		&\gamma\rho^N \mathcal T_+(u;x_0,\rho/2;t,T_*)\\
	    \leq &
		\gamma\rho^N
		\left(\frac{T_*-t}{\rho^{sp}}\right)^
		{\frac{q}{q+1-p}}
		+\int_t^{T_*}F(\tau)\,d\tau-\int_{K_{\rho/2}(x_0)}u^q(x,t)\eta(x)^{p+1}\,dx\\
		\leq&\gamma\rho^N
		\left(\frac{T_*-t}{\rho^{sp}}\right)^{\frac{q}{q+1-p}}
		+\int_t^{T_*}|F(\tau)|\,d\tau\\
	    \leq&
		\gamma\rho^N\left(\frac{T_*-t}{\rho^{sp}}\right)^{\frac{q}{q+1-p}},
	\end{align*}
	which implies that
	\begin{equation}\label{eq:extinction-tail-bound}
		\mathcal T_+
		(u;x_0,\rho/2;t,T_*)
		\leq
		\gamma
		\left(\frac{T_*-t}{\rho^{sp}}\right)^
		{\frac{q}{q+1-p}}.
	\end{equation}
	Multiplying \eqref{eq:extinction-tail-bound} by $\rho^N$ and adding \eqref{eq:extinction-mass-uniform}, we conclude that
	\begin{align*}
		&\int_{K_\rho(x_0)}u^q(x,t)\,dx
		+\rho^N\mathcal T_+(u;x_0,\rho/2;t,T_*)\\
	   \leq&
		\gamma\rho^N\left(\frac{T_*-t}{\rho^{sp}}\right)^{\frac{q}{q+1-p}}=
		\gamma\left[\frac{(T_*-t)^q}{\rho^{\lambda_q}}\right]^{\frac{1}{q+1-p}}.
	\end{align*}
	This proves (i).
	
	We next prove (ii).
	Now assume $\lambda_q>0$. Local $L^{q+1}$ continuity and $u(\cdot,T_*)=0$ imply
	\begin{align*}
		\inf_{t<\tau<T_*}\int_{K_{2\rho}(x_0)}u^q(x,\tau)\,dx=0.
	\end{align*}
	Applying Theorem~\ref{thm-1-7} on $(t,T_*)$ and using $u\geq 0$ in $\mathbb{R}^N$ together with \eqref{eq:extinction-tail-bound}, we deduce
	\begin{align*}
		\operatorname*{ess\,sup}_{K_{\rho/2}(x_0)\times((t+T_*)/2,T_*)}u
		\leq\gamma\left(\frac{T_*-t}{\rho^{sp}}\right)^{\frac{1}{q+1-p}}.
	\end{align*}
	The same bound holds on the time slice $(t+T_*)/2$ by local
	$L^{q+1}$ continuity. This proves (ii).
\end{proof}

	\subsection*{Acknowledgments}
	This work was supported by the National Natural Science Foundation of China (No. 12471128 and 12601365) and the Postdoctoral Fellowship Program of the China Postdoctoral Science Foundation (No. GZC20261660).

	\subsection*{Conflict of interest} 
	The authors declare that there is no conflict of interest. 
	
	\subsection*{Data availability}
	No data was used for the research described in the article.
	
	\subsection*{AI disclosure} A preliminary proof of the local boundedness result in Section \ref{sec3} was developed independently and subsequently refined with the assistance of GPT-6.
	The proofs in Sections \ref{sec4} and \ref{sec5} were developed independently, with AI assistance limited to improvements in presentation. All computations, proofs, references, and conclusions were independently verified by the authors, who take full responsibility for the content of this paper.

\end{document}